\documentclass{article}

\usepackage{amsmath}
\usepackage{amsthm}
\usepackage{amssymb}
\usepackage[marginal]{footmisc}
\usepackage{graphicx}
\usepackage{natbib}
\usepackage{mathrsfs}
\usepackage{titlesec}  % 加载 titlesec 包
\usepackage{enumitem}
\usepackage[colorlinks=true,linkcolor=blue,citecolor=blue,urlcolor=blue]{hyperref}

\newtheorem{theorem}{Theorem}[section]
\newtheorem{thm}[theorem]{Theorem}
\newtheorem{cor}[theorem]{Corollary}
\newtheorem{prop}[theorem]{Proposition}
\newtheorem{lem}[theorem]{Lemma}
\newtheorem{proposition}[theorem]{Proposition}
\newtheorem{lemma}[theorem]{Lemma}
\theoremstyle{definition}
\newtheorem{defn}[theorem]{Definition}
\newtheorem{definition}[theorem]{Definition}

\newtheorem{rmk}[theorem]{Remark}

\newtheorem{conj}[theorem]{Conjecture}

\newcommand{\bC}{\mathbb{C}}

\newcommand{\bR}{\mathbb{R}}

\def\db{\bar{\partial}}

\def\Herm{\text{\small  Herm}}
\newcommand{\R}{\mathbb{R}}

\newcommand{\Pol}{\operatorname{Pol}}

\newcommand{\Ga}{\mathrm{G}}
\newcommand{\Udom}{\mathrel{\gtrdot}}

\newcommand{\conv}{\operatorname{conv}}

\makeatletter
\let\savedtitle\title
\let\savedauthor\author
\let\saveddate\date
\let\savedthanks\thanks
\let\savedand\and
\let\savedmaketitle\maketitle
\let\savedatmaketitle\@maketitle
\newcommand{\resetmaketitle}{%
  \global\let\title\savedtitle
  \global\let\author\savedauthor
  \global\let\date\saveddate
  \global\let\thanks\savedthanks
  \global\let\and\savedand
  \global\let\maketitle\savedmaketitle
  \global\let\@maketitle\savedatmaketitle
}
\makeatother

\begin{document}

%\begin{figure}
 % \centering
  %% Requires \usepackage{graphicx}
  %\includegraphics[width=0.80\textwidth]{figure1}\\

%\end{figure}

\title{A numerical criterion for complex Hessian type equations on projective manifolds}

\author{%
  Gao Chen, Sijie Nie and Yulun Xu
}
%\date{\today}
\maketitle

% 摘要
\begin{abstract}
We prove a Nakai-Moishezon-type criterion for complex Hessian-type equations on projective manifolds whose associated degree-$n$ polynomials are strongly strictly right-Noetherian. For strictly right-Noetherian polynomials of arbitrary degree, we prove a uniform Nakai-Moishezon-type criterion. This class includes the complex Hessian and Hessian quotient equations.
\end{abstract}

\tableofcontents   %目录页

\section{Introduction}

The complex Monge-Amp\`ere equation entered K\"ahler geometry through Calabi's conjecture \cite{CalabiConj}, proved by Yau \cite{Yau1978OnTR}. Subsequent work developed existence theory for the J-equation, complex Hessian equations, and other fully nonlinear equations under C-subsolution hypotheses \cite{Song2004OnTC,fang2010classfullynonlinearflow,Gabor,Collins201511FW}. In a complementary direction, Demailly and P\u{a}un gave a numerical characterization of the K\"ahler cone \cite{JPDemaillyPaun}. Analogous numerical criteria for the J-equation were established by Collins and Sz\'{e}kelyhidi in the toric case \cite{CollinsGabor} and by the first author on arbitrary compact K\"ahler manifolds \cite{GaoChen2021}. Further numerical and algebro-geometric criteria have since been obtained for generalized Monge-Amp\`ere equations \cite{Datar2020ANC,ChuLeeTakhashi,song2020nakaimoishezoncriterionscomplexhessian}, including equations involving differential forms \cite{Fang2023OnAF} and the $\frac{\sigma_2}{\sigma_1}$- and $\sigma_2$-equations on K\"ahler threefolds \cite{2026arXiv260711296F}.

       Let $(M,\chi)$ be a compact connected smooth projective manifold of complex dimension $n$ with a K\"{a}hler form $\chi$, and $[\omega_0] \in H^{1,1}(M;\bR)$, where $ H^{1,1}(M;\bR)$ is the $(1,1)$-Dolbeault cohomology group.
        For $d\le n$, consider the following complex Hessian type equation:
        \begin{equation}
            \omega^{d}\wedge\chi^{n-d} = \sum_{k=0}^{d-1}a_k\omega^{k}\wedge\chi^{n-k}
        \end{equation}
        for $\omega \in [\omega_0]$.
        The associated polynomial is $f(x)=x^d-\sum_{k=0}^{d-1}a_k x^{k}$. When $d=n$, this equation is also called a Monge-Amp\`ere-type equation. Lin introduced right-Noetherian polynomials in \cite{LIN2023110038}; see Definition~\ref{right-Noetherian}. When $d=n$, Theorem~1.2 of \cite{LIN2023110038} proves the equivalence between right-Noetherian polynomials and the so-called $\Upsilon$-stable condition for the associated complex Hessian-type equation. More recently, Fang and Ma \cite{FangMaGarding} generalized this theory to polynomials with $d<n$, as well as to other multivariable polynomials. The cone condition can also be defined for $d<n$ in Definition~\ref{DefnConeCondition}.

Our first theorem studies strongly strictly right-Noetherian polynomials of degree $n$:
        \begin{thm}
 Let $(M,\chi)$ be a compact connected smooth projective manifold of complex dimension $n$. Assume that $c_k$ are real constants for $k=1,2,\ldots,n-1$, and $c_0$ is a real-valued function on $M$. If the polynomial $f(x)=x^n-\sum_{k=0}^{n-1}c_k\binom{n}{k}x^{k}$ associated with
\begin{equation}
            \omega^{n} = \sum_{k=0}^{n-1}c_k\binom{n}{k}\omega^{k}\wedge\chi^{n-k}
            \label{FirstEquation}
        \end{equation} is strongly strictly right-Noetherian at all points, and the integrability condition
\[
            \int_M\omega_0^{n} = \int_M\sum_{k=0}^{n-1}c_k\binom{n}{k}\omega_0^{k}\wedge\chi^{n-k}
\]
holds, then the following are equivalent:
            \begin{enumerate}
                \item The equation (\ref{FirstEquation}) has a unique smooth solution $\omega\in [\omega_0]$ satisfying the cone condition.                    
                \item There exists $\omega\in [\omega_0]$ satisfying the cone condition.
                \item For all subvarieties $V\subset M$ of dimension $p<n$,
                    we have 
                    \begin{align*}
                    &\int_V \frac{n!}{p!}\omega_0^{p} - \sum_{k=n-p}^{n-1}c_k \binom{n}{k}\frac{k!}{(k-n+p)!}\omega_0^{k-n+p}\wedge\chi^{n-k}\\
                    &=\frac{n!}{p!}\big(\int_V \omega_0^{p} - \sum_{k=0}^{p-1}c_{k+n-p} \binom{p}{k}\omega_0^{k}\wedge\chi^{p-k}\big)
                    > 0.
                    \end{align*}
            \end{enumerate}
            \label{MainThm1}
        \end{thm}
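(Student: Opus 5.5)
The plan is to prove $1\Rightarrow 2\Rightarrow 3\Rightarrow 1$, with the last implication carrying almost all the difficulty.

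\emph{The easy implications.} The implication $1\Rightarrow 2$ is trivial. For $2\Rightarrow 3$, fix $\omega\in[\omega_0]$ satisfying the cone condition of Definition~\ref{DefnConeCondition} and a subvariety $V$ of dimension $p<n$. I would use the pointwise algebraic content of the cone condition for strongly strictly right-Noetherian $f$: restricting to any $p$-dimensional subspace, which on $V$ means the tangent space at smooth points, the form
\[
\omega^{p}-\sum_{k=0}^{p-1}c_{k+n-p}\binom{p}{k}\omega^{k}\wedge\chi^{p-k}
\]
is strictly positive. This is exactly the content of the $(n-p)$-th derivative of $f$, normalized by $p!/n!$, being positive at the eigenvalues; it is where right-Noetherianity enters, via the fact that $f^{(n-p)}$ stays positive under the relevant partial restriction. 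Integrating over the regular part of $V$, and using cohomology invariance of $\int_V$ for closed forms, gives the inequality in part~3.

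\emph{The main implication $3\Rightarrow 1$.} I would follow the strategy of \cite{GaoChen2021} and \cite{song2020nakaimoishezoncriterionscomplexhessian}: a continuity path combined with induction on dimension. Uniqueness is the standard maximum principle argument, using the ellipticity and concavity supplied by the cone condition.

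For existence, first deform the equation. Choose a path of data $c_k(t)$, for example by replacing $\omega_0$ with $\omega_0+t\chi$, or by scaling the constants, so that at $t$ large a solution exists trivially and the numerical condition holds uniformly along the path. Openness comes from the implicit function theorem. Closedness reduces, by the $C$-subsolution theory \cite{Gabor,Collins201511FW}, to producing a subsolution in the cone on $M$.

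The key step, and the main obstacle, is constructing that subsolution from the numerical condition. I would proceed by induction on dimension, as in Chen's uniform J-equation argument. Suppose closedness fails at a time $t_0$. Then the set where no subsolution exists leads, via a mass concentration argument (Demailly--P\u{a}un style, using the projectivity of $M$ to resolve subvarieties and to apply Demailly regularization), to a proper subvariety $V$. On that $V$, the quantity in part~3 is zero or negative in the limit. This contradicts the strict inequality, provided we have a uniform version of it, namely the same inequality with $\chi$ replaced by $(1+\epsilon)\chi$, or with a fixed $\epsilon\int_V\chi^p$ margin. I expect to obtain that uniform margin from strong strict right-Noetherianity together with compactness of the Chow-type families, which is presumably why the strong assumption is needed.

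To glue local subsolutions near $V$, obtained by induction on $\dim V$ applied to the restricted polynomial $f^{(n-p)}$ (again strongly strictly right-Noetherian), with a subsolution away from $V$, I would use regularized maximum. The hardest technical point is checking that right-Noetherian cone conditions are preserved under maxima and under the restriction and derivative operations; this is where I would invoke Lin's and Fang--Ma's algebraic lemmas \cite{LIN2023110038,FangMaGarding}.
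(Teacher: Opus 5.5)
Your implications $1\Rightarrow 2\Rightarrow 3$ are fine and agree with the paper. By Lemma~\ref{positivity}, on the cone $H^{(n-p)}(\omega)=n!\sum_{|I|=p}\partial_{I^c}F(\lambda)\,\beta_I$ with all coefficients positive. That, rather than positivity of $f^{(n-p)}$ at the eigenvalues, is the relevant algebraic fact. So the form is positive on every $p$-plane and integrates positively over $V_{\mathrm{reg}}$.

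The genuine gap is closedness in $3\Rightarrow 1$. You argue by contradiction: failure of closedness should produce, via mass concentration, a subvariety on which the quantity in (3) is $\le 0$, contradicting a uniform margin $\int_V(\cdots)\ge\epsilon\int_V\chi^p$. Neither ingredient is available.
\begin{itemize}
\item Mass concentration does not extract a destabilizing subvariety from the absence of a subsolution, and you give no other mechanism. Such a statement would be at least as deep as the criterion itself.
\item Condition (3) does not yield the margin. Strong strict right-Noetherianity is a condition on the roots of one univariate polynomial and says nothing about $\int_V(\cdots)/\int_V\chi^p$ as $V$ varies. Chow varieties give finitely many classes only in bounded degree. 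A compactness argument would need positivity on the closed cone of effective cycles, which (3) does not assert.
\end{itemize}
In fact the margin is a \emph{consequence} of the theorem: a cone form has $H^{(n-p)}(\omega)\ge\epsilon\chi^p$ pointwise, as in the proof of $(2)\Rightarrow(1)$ of Theorem~\ref{MainThm2}. Theorem~\ref{MainThm1}, unlike Theorem~\ref{MainThm2}, assumes only strict positivity, so the margin cannot be used as an input.

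The missing idea is that closedness is proved constructively. Projectivity enters through an ample divisor and an integral class, not through resolution or regularization.
\begin{itemize}
\item \emph{The path.} After normalizing $c_{n-1}=0$ and $c_0$ constant, the path is $\omega_0+t\chi_L$ with $[\chi_L]=2\pi c_1(L)$ and $\tilde c_0(t)$ fixed by integrability. Strict positivity for $t>0$ follows from \eqref{IntegrabilityExpansion} and the divisor identity \eqref{DivisorIntersectionIdentity}. That identity needs $[\chi_L]$ integral, so it is unavailable for your path $\omega_0+t\chi$ when $[\chi]$ is irrational. The inequality $\tilde c_0(t)>c_0$ preserves the root condition.
\item \emph{Mass concentration and gluing.} At $t_0=\inf I$, Theorem~\ref{MassConcentration} gives $\Theta\in[\omega_0]$ in the closed cone with $\Theta\ge\beta_Y[Y]$ for an ample divisor $Y$. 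Replacing $\tfrac{\beta_Y}{2}[Y]$ by $(\tfrac{\beta_Y}{2}-\epsilon_4)\chi_Y$ gives a current strictly in the cone off $Y$. This is where the ``subsolution away from $V$'' in your gluing step actually comes from. Theorem~\ref{GluingThm} then glues it to local subsolutions near arbitrary subvarieties $Z$.
\item \emph{Local subsolutions, smooth $Z$.} These come from induction. On smooth $Z$ of dimension $m$ one solves the equation for $q=\tfrac{m!}{n!}f^{(n-m)}-c_Z$ with $c_Z>0$; the equation for $f^{(n-m)}$ itself is not integrable on $Z$. The root sequence of $q$ is $\bigl(r(q),r(f^{(n-m+1)}),\ldots,r(f^{(n-1)})\bigr)$ with $r(q)>r(f^{(n-m)})$. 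So $q$ stays in the class because $f$ is strongly strictly right-Noetherian; this, not a margin, is what the strong hypothesis buys. The solution is then extended off $Z$ by Lemma~\ref{extensionlemma}.
\item \emph{Local subsolutions, singular $Z$.} Here one must work on a resolution with the perturbed pair $(1+s)\pi^*\omega_0+s\tilde\rho$, $\pi^*\chi+s^n\tilde\rho$. One checks the numerical condition uniformly in $\tilde V$ and runs a second mass concentration on $\tilde Z$ (Lemma~\ref{SingularExtention}). ``Resolve and glue by regularized maximum'' does not explain how to get past the degeneracy of $\pi^*\omega_0$ along the exceptional divisor.
\end{itemize}
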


For strict right-Noetherian polynomials, including polynomials with lower degrees, we prove the following uniform result:

\begin{thm}\label{MainThm2}
 Let $(M,\chi)$ be a compact connected smooth projective manifold of complex dimension $n$. Assume that $a_k$ are real constants for $k=1,2,...,d-1$. Suppose that $f(x)=x^d-\sum_{k=0}^{d-1}a_k x^{k}$ is strictly right-Noetherian, and the integrability condition
\[
             \int_M\omega_0^{d}\wedge\chi^{n-d} = \int_M\sum_{k=0}^{d-1}a_k\omega_0^{k}\wedge\chi^{n-k}
\] holds. The following statements are then equivalent:
\begin{enumerate}
\item There exists a constant $\epsilon_0>0$ such that for any $p$-dimensional irreducible analytic subvariety
$V\subset M$,
\begin{equation}\label{e uniform stable}
\int_V \frac{d!\omega_0^{d-n+p}\wedge\chi^{n-d}}{(d-n+p)!}-\sum_{k=n-p}^{d-1}\frac{k!a_k\omega_0^{k-n+p}\wedge\chi^{n-k}}{(k-n+p)!}\ge \epsilon_0 \int_V \chi^{p},
\end{equation}
for all $n-d \le p <n$.
\item For any sufficiently small $\epsilon>0$, there exists $\omega_\epsilon\in [\omega_0]\cap\Upsilon_{F_\epsilon}$, where $F_\epsilon$ is the polarization of $f(x)+\epsilon$.
\end{enumerate}
\end{thm}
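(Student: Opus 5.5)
We prove the two implications separately. For (2)$\Rightarrow$(1) we restrict the cone condition to subvarieties. For (1)$\Rightarrow$(2) we run the induction on dimension from the Nakai--Moishezon-type arguments of \cite{GaoChen2021,song2020nakaimoishezoncriterionscomplexhessian}, in the form adapted to polarizations of right-Noetherian polynomials by Lin and Fang--Ma.

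\textbf{(2)$\Rightarrow$(1).} Take $\omega_\epsilon\in[\omega_0]\cap\Upsilon_{F_\epsilon}$. The integrand in (\ref{e uniform stable}) is, up to the factor $(n-p)!$, the $(n-p)$-th derivative $f^{(n-p)}$ evaluated on the mixed forms. Precisely, it is the form $\sum_k \frac{k!}{(k-n+p)!}a_k\omega^{k-n+p}\wedge\chi^{n-k}$ built from the polarization $F$ with $n-p$ slots filled by $\chi$. Since $F_\epsilon$ differs from $F$ only in the constant term, and the constant term dies under one derivative, this form is the same for $F$ and for $F_\epsilon$.

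The cone condition $\Upsilon_{F_\epsilon}$ should say that, for every $p$, this $(p,p)$-form is positive when restricted to $p$-dimensional subspaces. The strictness of the right-Noetherian property should make the positivity uniform: on the compact set of unit simple $p$-vectors we get a bound $\ge c\,\chi^p$. I expect $c$ to be independent of $\epsilon$ once $\epsilon$ is fixed small, and if necessary we fix one $\epsilon$.

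Restricting to the regular part of $V$ and integrating then gives (\ref{e uniform stable}) with $\omega_\epsilon$ in place of $\omega_0$. Because every term is a product of closed forms, we may replace $\omega_\epsilon$ by $\omega_0$ by cohomology invariance on $V$, using Stokes on the regular part of $V$.

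\textbf{(1)$\Rightarrow$(2).} Fix a small $\epsilon>0$.

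First, the $p=n$ term of the integrability condition together with $f\mapsto f+\epsilon$ produces a strict inequality. This is what allows us to solve the $\epsilon$-perturbed equation. Inequality (\ref{e uniform stable}) for all $p<n$ then transfers to $F_\epsilon$ with margin $\epsilon_0-O(\epsilon)$ on subvarieties, and we need $\epsilon\ll\epsilon_0$ here.

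Second, we argue by induction on the dimension of subvarieties, as in \cite{GaoChen2021}. Suppose $[\omega_0]$ admits no representative in $\Upsilon_{F_\epsilon}$. Then we consider a continuity path $[\omega_0]+t[\chi]$, which for large $t$ lies in the cone. We use the twisted equation solved along the path, via the $C$-subsolution theory of \cite{Gabor}, together with Demailly's mass concentration technique, as in Song's approach \cite{song2020nakaimoishezoncriterionscomplexhessian}. Along the path we obtain a closed positive current. When the path fails to continue at some $t_0>0$, this current has a Lelong-number/singular part concentrated along a proper subvariety $V$ on which the stability inequality degenerates. Projectivity is used here: approximating by divisors and using Siu decomposition reduces the situation to subvarieties. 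On $V$ the induction hypothesis produces local representatives, and these are glued by regularized maximum.

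Third, the uniform margin $\epsilon_0$ is needed so that the gluing tolerances are independent of $V$ and of the small twisting parameters. Uniformity in $V$ is exactly what replaces the finiteness argument that is unavailable in the lower-degree case $d<n$, where the first slots are filled by $\chi^{n-d}$.

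\textbf{Main obstacle.} The hardest step is the mass-concentration/gluing step for $d<n$. Here the relevant operator is not Monge--Ampère, and positivity of the polarized form on $p$-subspaces must be checked through the restricted Gårding-type cones of Fang--Ma. I would first try to reduce it to the known Monge--Ampère-type result by considering the degree-$n$ polynomial $x^{n-d}f(x)$ shifted by $\chi$. If the reduction fails, I would work directly with the concavity of $F_\epsilon^{1/d}$ on $\Upsilon$ to obtain the a priori estimates along the path.
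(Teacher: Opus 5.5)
Your argument for (2)$\Rightarrow$(1) is the paper's. Since $\Upsilon_{F_\epsilon}\subseteq\Upsilon^1_{F_\epsilon}=\Upsilon^1_F$, Lemma~\ref{positivity} writes $H_F^{(n-p)}(\omega_\epsilon)=n!\sum_{|I|=p}\partial_{I^c}F(\lambda)\,\beta_I$ with positive coefficients. Compactness of $M$ then gives $\ge\epsilon_1\chi^p$, and cohomology invariance on $V$ finishes. One correction: the positivity comes from the nesting $\Upsilon_{F_\epsilon}\subseteq\Upsilon_{\partial_JF_\epsilon}$, and the uniformity from compactness of $M$, not from strictness.

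For (1)$\Rightarrow$(2) there is a genuine gap. You outline the Demailly--P\u{a}un/Chen/Song scheme run directly on the degree-$d$ operator, and its load-bearing steps are not available.

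\begin{itemize}
\item Solving the twisted equations along the path needs the fact that a cone representative for a general right-Noetherian polynomial of degree $d<n$ yields a solution. This is open beyond the Hessian and Hessian quotient cases; for $d=n$ it is Lin's theorem.
\item The $d<n$ mass-concentration and gluing step, which you flag yourself as the main obstacle, is left unresolved.
\item Your reason for needing a uniform $\epsilon_0$ (gluing tolerances) is not where uniformity enters.
\end{itemize}

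A small point: for $p<n$ the integrand in (\ref{e uniform stable}) does not involve $a_0$. So the margin for $f+\epsilon$ is exactly $\epsilon_0$, not $\epsilon_0-O(\epsilon)$.

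The missing idea is the reduction you mention only as a fallback, and it needs three ingredients your proposal does not supply. Let $g=f+\epsilon$, which is still right-Noetherian because $r(f)>r(f')$. Let $g_T=(x+T)^{n-d}g$, $G=\Pol_n(g)$ and $G_{T,n-d}=\Pol_n(g_T)$.

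\begin{itemize}
\item[(a)] \emph{Cone inclusion.} One needs $g_T$ right-Noetherian and $\Upsilon_{G_{T,n-d}}\subseteq\Upsilon_{G}$. Lemma~\ref{cone-inclusion} proves this one factor at a time. It $\boldsymbol{\kappa}$-polarizes $q_T(x,y)=g(x)(x+T+y)\in\Ga_2$, whose component satisfies NRT. This gives a polarization $\Pol_n(g_T)+y\,G$ whose $y$-derivative is $G$, and one then slices at $y=0$.
\item[(b)] \emph{Numerical condition in every dimension with a single $T$.} For $p\ge n-d$,
$\int_VH^{(n-p)}_{G_{T,n-d}}(\omega_0)=T^{n-d}\int_VH^{(n-p)}_{G}(\omega_0)+O(T^{n-d-1})\int_V\chi^p$.
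For $p<n-d$, the leading term is $\binom{n-d}{p}(n-p)!\,T^p\int_V\chi^p$. The uniform $\epsilon_0$ is exactly what lets one $T$ work for all $V$ (Proposition~\ref{prop hgtn-k}).
\item[(c)] \emph{Upgrade to strong strictness}, which Theorem~\ref{MainThm1} requires. By Lemma~\ref{perturbation} (via Lin's log-concavity), $h=g_T-\delta g_T'$ is strongly strictly right-Noetherian with $h\Udom g_T$. The uniform bound from (b) survives for small $\delta$. Subtracting a constant $c>0$ then restores integrability without changing $\Upsilon^1$.
\end{itemize}

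Theorem~\ref{MainThm1} then gives a solution in $\Upsilon^1$. Lin's Lemma~2.5 puts it in $\Upsilon_{\Pol_n(h)}$. Theorem~\ref{UpsilonDominance} together with (a) places it in $\Upsilon_{F_\epsilon}$.
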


Combined with Corollary~3 of \cite{Gabor}, Theorem~\ref{MainThm2} yields the following uniform version of Sz\'{e}kelyhidi's conjecture on complex Hessian quotient equations on projective manifolds:
\begin{equation}\label{e quotient equation}
    \omega_{\varphi}^k \wedge \chi^{n-k} = C_{[\omega][\chi],k,l} \omega_{\varphi}^l \wedge \chi^{n-l}, \qquad \lambda[\chi^{-1} \omega_{\varphi}] \in \Gamma_k.
\end{equation}

We assume that
\begin{equation}
\label{weakerassumptionGabor1}
\int_M \omega^l \wedge \chi^{n-l}\neq0.
\end{equation}
The constant $C_{[\omega][\chi],k,l}$ is determined by
\[
\int_M \omega^k \wedge \chi^{n-k}-C_{[\omega][\chi],k,l} \int_M \omega^l \wedge \chi^{n-l}=0.
\]

\begin{cor}\label{cor 1}
   Let $(M,\chi)$ be a compact connected smooth projective manifold of complex dimension $n$, and let $\omega$ be a closed $(1,1)$-form. For any $0 <l<k\le n$, assume that $C_{[\omega][\chi],k,l}>0$. Then the following statements are equivalent:
   \begin{enumerate}
       \item (\ref{e quotient equation}) admits a smooth solution.
       \item There exists a constant $\epsilon_0>0$  such that for any $p$-dimensional irreducible analytic subvariety $V \subset M$,
       \begin{equation}
           \int_V \frac{k! \omega^{k-n+p}\wedge \chi^{n-k}}{(k-n+p)!} - C_{[\omega][\chi],k,l} \frac{l!\omega^{l-n+p}\wedge \chi^{n-l}}{(l-n+p)!}  \ge \epsilon_0 \int_V \chi^p,
       \end{equation}
       for all $n-l \le p <n$, 
       and
       \begin{equation}
       \label{weakerassumptionGabor2}
           \int_V \omega^{k-n+p}\wedge \chi^{n-k}\ge \epsilon_0 \int_V \chi^p,
       \end{equation}
       for all $n-k \le p <n-l$.
   \end{enumerate}
\end{cor}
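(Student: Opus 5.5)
The plan is to reduce Corollary~\ref{cor 1} to Theorem~\ref{MainThm2} together with Sz\'ekelyhidi's Corollary~3 of \cite{Gabor}. We apply Theorem~\ref{MainThm2} with $d=k$ and $f(x)=x^k - C\binom{k}{l}^{-1}\cdots$. More precisely, we write the quotient equation as $\omega^k\wedge\chi^{n-k}=a_l\,\omega^l\wedge\chi^{n-l}$ with $a_l=C_{[\omega][\chi],k,l}>0$ and all other $a_j=0$. The associated polynomial is $f(x)=x^k-a_l x^l$.

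The first step is to check that $f$ is strictly right-Noetherian in the sense of Definition~\ref{right-Noetherian}. We use the recursive description through derivatives. The derivatives $f^{(j)}$ for $j\le l$ have the form $c_j x^{k-j}-c_j' x^{l-j}$ with positive coefficients, so their largest real roots are positive and increase with $j$ in the required sense. For $j>l$ the derivatives are monomials $c\,x^{k-j}$. We must check that the strict right-Noetherian condition treats these as vacuous or as having root $0$, consistently with the ordering. This step needs care with the exact convention in the definition.

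The next step is to match the numerical conditions. For $p\ge n-l$, condition (\ref{e uniform stable}) with these $a$'s is exactly the first inequality in item~2 of the Corollary. For $n-k\le p<n-l$, the sum over $k'\ge n-p$ contains no nonzero term, since $l<n-p$. So (\ref{e uniform stable}) reduces to $\frac{k!}{(k-n+p)!}\int_V\omega^{k-n+p}\wedge\chi^{n-k}\ge\epsilon_0\int_V\chi^p$, which is (\ref{weakerassumptionGabor2}) up to rescaling $\epsilon_0$. Hence item~2 is equivalent to item~1 of Theorem~\ref{MainThm2}.

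It remains to show that item~2 of Theorem~\ref{MainThm2} is equivalent to solvability of (\ref{e quotient equation}).

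\emph{Forward direction.} Suppose item~2 of the Corollary holds. Theorem~\ref{MainThm2} then gives $\omega_\epsilon\in[\omega]\cap\Upsilon_{F_\epsilon}$ for small $\epsilon$. The next step is to unwind membership in $\Upsilon$ for $f+\epsilon$ into Sz\'ekelyhidi's C-subsolution condition for the $\sigma_k/\sigma_l$ equation. The pointwise condition is that $\lambda(\omega_\epsilon)\in\Gamma_k$ and that, for each $i$, $\sigma_k(\lambda')-C\sigma_l(\lambda')$ is positive in the appropriate sense on the $\Gamma_{k-1}$ cone, where $\lambda'$ deletes the $i$-th eigenvalue. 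Because $\epsilon>0$ only makes the inequalities strict, this should give a subsolution for the constant $C$ itself. Corollary~3 of \cite{Gabor} then yields a smooth solution.

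\emph{Converse direction.} Suppose $\omega_\varphi$ solves (\ref{e quotient equation}). The solution is then a strict subsolution of the $\epsilon$-perturbed problem, which puts it in $\Upsilon_{F_\epsilon}$. Here we use the standard fact that solutions of $\sigma_k/\sigma_l$ equations satisfy the cone condition $k\sigma_{k-1}(\lambda')>Cl\sigma_{l-1}(\lambda')$. Theorem~\ref{MainThm2} then gives item~2 of the Corollary.

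The main obstacle is this dictionary between the polarization cone $\Upsilon_{F_\epsilon}$ and Sz\'ekelyhidi's C-subsolution and $\Gamma_k$ conditions. In particular, the $\epsilon$-shift must be absorbed correctly in both directions, since it alters the constant term. I would handle this via the standard computation $\partial_{\lambda_i}\sigma_k=\sigma_{k-1}(\lambda|i)$, combined with compactness of $M$ to obtain uniform margins.
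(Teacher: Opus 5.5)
Your reduction is the same as the paper's: $d=k$, $f(x)=x^k-Cx^l$ with $C=C_{[\omega][\chi],k,l}$, Theorem~\ref{MainThm2} in both directions, and Corollary~3 of \cite{Gabor} to produce the solution. Your checks that $f$ is strictly right-Noetherian and that \eqref{e uniform stable} splits into the two inequalities of item~2 are correct; the paper leaves them implicit. One slip: the roots $r(f^{(j)})$ \emph{decrease}, not increase, in $j$ for $j\le l$, and they vanish for $l<j\le k-1$.

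\textbf{The gap in $(2)\Rightarrow(1)$.} The gap is the step you flag as the main obstacle, and the tool you propose does not close it. The formula $\partial_{\lambda_i}\sigma_k=\sigma_{k-1}(\lambda|i)$ and pointwise inequalities cannot show that a point of the \emph{component} $\Upsilon_{F_\epsilon}$ lies in $\Gamma_k$ or satisfies $\partial_iF>0$. Indeed $F_\epsilon(0)=\epsilon>0$ although $0\notin\Gamma_k$, so everything depends on which connected component of $\{F_\epsilon>0\}$ you are in.

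What is missing is the structural input from Section~2.
\begin{enumerate}
\item For small $\epsilon$, $f+\epsilon$ is still right-Noetherian because $r(f)>r(f')$. Also $r(f+\epsilon)>0$ and $r(f^{(i)})\ge0=r\bigl((x^k)^{(i)}\bigr)$ for $1\le i\le k-1$. Hence $f+\epsilon\Udom x^k$, and Theorem~\ref{UpsilonDominance} gives $\Upsilon_{F_\epsilon}\subseteq\Upsilon_{\sigma_k}=\Gamma_k$.
\item By Lemma~\ref{FangMaLinCorrespondence} and Theorem~\ref{kappa-polarization}, $F_\epsilon$ is a Fang-Ma-G{\aa}rding polynomial. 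Property (ii) of Definition~\ref{FMGdefinition} then gives $\Upsilon_{F_\epsilon}\subseteq\Upsilon^1_{F_\epsilon}$.
\item $\Upsilon^1_{F_\epsilon}=\Upsilon^1_F$ because the constant $\epsilon$ is killed by differentiation. This, rather than ``$\epsilon$ only makes the inequalities strict'', is why the shift is harmless.
\end{enumerate}
Hence $\lambda[\chi^{-1}\omega_\epsilon]\in\Gamma_k\cap\Upsilon^1_F$. This yields Sz\'ekelyhidi's subsolution condition $\sigma_{k-1}(\lambda|i)/\binom{n}{k}>C\,\sigma_{l-1}(\lambda|i)/\binom{n}{l}$ for all $i$. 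Your version in terms of $\sigma_k(\lambda'),\sigma_l(\lambda')$ has the wrong orders, and $k\sigma_{k-1}>Cl\sigma_{l-1}$ has the wrong normalization. No compactness argument is needed.

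\textbf{The gap in $(1)\Rightarrow(2)$.} The converse has the same kind of gap. Knowing $F_\epsilon(\lambda)=\epsilon>0$ and $\partial_iF(\lambda)>0$ does not by itself place $\lambda=\lambda[\chi^{-1}\omega_\varphi]$ in $\Upsilon_{F_\epsilon}$. The paper uses $\lambda\in\bar\Upsilon_F\subseteq\Upsilon_{F_\epsilon}$, which can be justified as follows.
\begin{enumerate}
\item For $t>0$, $\lambda+t\mathbf{1}$ lies in $\Gamma_k\cap\{F>0\}$, because $\sigma_k/\sigma_l$ is strictly increasing in each variable on $\Gamma_k$.
\item That set is convex, being a superlevel set of the concave function $(\sigma_k/\sigma_l)^{1/(k-l)}$ on $\Gamma_k$. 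It also contains all large diagonal points, so it lies in $\Upsilon_F$.
\item Finally, $\bar\Upsilon_F$ is connected, contained in $\{F\ge0\}\subseteq\{F_\epsilon>0\}$, and meets $\Upsilon_{F_\epsilon}$ at large diagonal points. Hence $\bar\Upsilon_F\subseteq\Upsilon_{F_\epsilon}$.
\end{enumerate}
With these two component arguments in place, your outline becomes the paper's proof.
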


Let us compare the original conjecture:

\begin{conj}(Sz\'{e}kelyhidi \cite{Gabor})
   Let $(M,\chi)$ be a compact connected smooth K\"ahler manifold of complex dimension $n$. Let $\omega$ be a closed $(1,1)$-form. For any $0 <l<k\le n$, the following statements are equivalent:
   \begin{enumerate}
       \item (\ref{e quotient equation}) admits a smooth solution.
       \item There exists a form $\omega_1 \in [\omega]$ satisfying $\lambda[\chi^{-1}\omega_1]\in \Gamma_k$ and  for any $p$-dimensional irreducible analytic subvariety $V \subset M$,
       \begin{equation*}
           \int_V \frac{k! \omega^{k-n+p}\wedge \chi^{n-k}}{(k-n+p)!} - C_{[\omega][\chi],k,l} \frac{l!\omega^{l-n+p}\wedge \chi^{n-l}}{(l-n+p)!}>0,
       \end{equation*}
       for all $n-l \le p <n$.
   \end{enumerate}
\end{conj}

One aspect of our result is stronger than Sz\'{e}kelyhidi's original conjecture: in proving $(2) \Longrightarrow (1)$ in Corollary~\ref{cor 1}, we do not assume a priori that there exists a form $\omega_1 \in [\omega]$ satisfying $\lambda[\chi^{-1}\omega_1]\in \Gamma_k$; we require only the weaker conditions~\eqref{weakerassumptionGabor1} and~\eqref{weakerassumptionGabor2}, together with $C_{[\omega][\chi],k,l}>0$.

Theorem~\ref{MainThm2} also yields the uniform version of Sz\'{e}kelyhidi's conjecture, as formulated by Murakami \cite{Murakami2024}, for complex $k$-Hessian equations
\begin{equation}\label{e hessian equation1}
    \omega_{\varphi}^k \wedge \chi^{n-k}= e^{F(x)} \chi^n , \,\,\, \lambda[\chi^{-1}\omega_{\varphi}]\in \Gamma_k 
\end{equation}
on projective manifolds. Here $F(x)$ is any smooth function satisfying 
\begin{equation*}
    \int_M \omega_0^k \wedge \chi^{n-k}= \int_M e^{F(x)}\chi^n.
\end{equation*}

\begin{cor}\label{cor 2}
   Let $(M,\chi)$ be a compact connected smooth projective manifold of complex dimension $n$. Let $\omega$ be a closed $(1,1)$-form. For any $0 <l<k\le n$, the following statements are equivalent:
   \begin{enumerate}
       \item There exists a form $\omega_1 \in [\omega]$ satisfying $\lambda[\chi^{-1}\omega_1]\in \Gamma_k$.
       \item (\ref{e hessian equation1}) admits a smooth solution.
       \item There exists a constant $\epsilon_0>0$  such that for any $p$-dimensional irreducible analytic subvariety $V \subset M$,
       \begin{equation*}
           \int_V  \omega^{k-n+p}\wedge  \chi^{n-k}  \ge \epsilon_0 \int_V \chi^p,
       \end{equation*}
       for all $n-k \le p <n$. 
   \end{enumerate}
\end{cor}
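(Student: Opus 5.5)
The plan is to prove the cycle of implications $(1)\Rightarrow(3)\Rightarrow(2)\Rightarrow(1)$ for Corollary~\ref{cor 2}, using Theorem~\ref{MainThm2} for the hard step and known PDE results for the rest.

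\emph{$(2)\Rightarrow(1)$.} A solution $\omega_\varphi$ of (\ref{e hessian equation1}) satisfies $\lambda[\chi^{-1}\omega_\varphi]\in\Gamma_k$ by definition. So we may take $\omega_1=\omega_\varphi$.

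\emph{$(1)\Rightarrow(2)$.} This is the classical existence theory for complex $k$-Hessian equations. A form in $\Gamma_k$ in the class serves as a $\mathcal{C}$-subsolution after rescaling; compare Corollary~3 of \cite{Gabor}, or the Dinew--Ko{\l}odziej/Hou--Ma--Wu estimates combined with the continuity method. We invoke these directly.

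\emph{$(1)\Rightarrow(3)$.} Since $\Gamma_k$ is open and $M$ is compact, we can choose $\delta>0$ with $\lambda[\chi^{-1}(\omega_1-\delta\chi)]\in\Gamma_k$. For a $p$-dimensional subvariety $V$ with $n-k\le p<n$, the restriction of a $\Gamma_k$ form to the regular part of $V$ has $(\omega_1-\delta\chi)^{k-n+p}\wedge\chi^{n-k}|_V\ge0$, by the standard restriction property of G\aa rding cones: $\sigma_{k-(n-p)}$ of the restricted eigenvalues is positive. Expanding $\omega_1=(\omega_1-\delta\chi)+\delta\chi$ binomially, every term is nonnegative, using that mixed products of $\Gamma_k$-type forms with $\chi$ are nonnegative (G\aa rding). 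The pure $\chi$ term contributes $\delta^{k-n+p}\int_V\chi^p$. Cohomological invariance of $\int_V$ then gives (3) with $\epsilon_0=\min_p\delta^{k-n+p}$.

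\emph{$(3)\Rightarrow(1)$, the main step.} We apply Theorem~\ref{MainThm2} with $d=k$ and $f(x)=x^k$, that is, all $a_j=0$, which is strictly right-Noetherian. Condition (\ref{e uniform stable}) then reads $\int_V\frac{k!}{(k-n+p)!}\omega^{k-n+p}\wedge\chi^{n-k}\ge\epsilon_0'\int_V\chi^p$, which is exactly (3) up to constants. The integrability condition of Theorem~\ref{MainThm2} fails for $a_j=0$ unless $\int_M\omega^k\wedge\chi^{n-k}=0$, so we instead take $f(x)=x^k-a_0$ with $a_0=\int_M\omega^k\wedge\chi^{n-k}/\int_M\chi^n$. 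Condition (3) with $p=0$ interpreted, or positivity coming from $p$ near $n$, gives $a_0>0$. For this $f$, only terms with $k-n+p\ge0$ and $k\ge n-p$ appear in (\ref{e uniform stable}), and the $a_0$ term enters only when $p=n$, which is excluded. So (\ref{e uniform stable}) is again (3).

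I expect the obstacle to be checking that $x^k-a_0$ with $a_0>0$ is strictly right-Noetherian in the sense of Definition~\ref{right-Noetherian}, and identifying $\Upsilon_{F_\epsilon}$ for $x^k-a_0+\epsilon$ with (a subcone of) $\Gamma_k$. Since $\Upsilon$ of $x^k-c$ with $c>0$ should be the set where all derivatives $\partial^j F$ are positive, it reduces to $\sigma_{k-j}>0$ for all $j$, which is $\Gamma_k$. Given this, Theorem~\ref{MainThm2} produces $\omega_\epsilon\in[\omega]\cap\Gamma_k$, which proves (1).
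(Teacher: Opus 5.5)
Your $(1)\Leftrightarrow(2)$ is a citation of the PDE theory, just as in the paper. Your $(1)\Rightarrow(3)$ is a correct direct argument: shift by $\delta\chi$, use that restrictions of $\Gamma_k$-forms to $p$-planes lie in $\Gamma_{k-n+p}$, and expand binomially. The paper instead feeds the solution from (2) into the easy direction of Theorem~\ref{MainThm2}; your route is more elementary. Your $(3)\Rightarrow(1)$ is the paper's argument: apply Theorem~\ref{MainThm2} to $x^k-a_0$, then use $\Upsilon_{F_\epsilon}\subseteq\Gamma_k$ for $0<\epsilon<a_0$ (for instance by Theorem~\ref{UpsilonDominance}, since $x^k-(a_0-\epsilon)\Udom x^k$).

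\textbf{The gap.} The step ``(3) gives $a_0>0$'' is unjustified, and in fact false. Condition (3) only involves proper subvarieties. When $p=0$ is in range (i.e.\ $k=n$) it is the vacuous inequality $1\ge\epsilon_0$. Nothing in (3) controls $\int_M\omega^k\wedge\chi^{n-k}$. Yet $a_0>0$ is indispensable: if $a_0\le 0$, then $x^k-a_0$ either has no real root or has $r(f)\le 0=r(f')$. So it is not strictly right-Noetherian, and Theorem~\ref{MainThm2} does not apply.

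\textbf{A counterexample.} Take $M=(\mathbb{P}^1)^3$, let $h_i$ be the pulled-back Fubini--Study forms with $\int_{\mathbb{P}^1}h_i=1$, and set $\chi=h_1+h_2+h_3$, $n=3$, $k=2$, $l=1$, and $[\omega]=-\tfrac32 h_1+2h_2+2h_3$. The $p=1$ condition is trivial. Every irreducible surface $S$ has class $d_1h_1+d_2h_2+d_3h_3$ with all $d_i\ge 0$, and
\[
\int_S\omega\wedge\chi=4d_1+\tfrac12 d_2+\tfrac12 d_3\ \ge\ \tfrac14\cdot 2(d_1+d_2+d_3)=\tfrac14\int_S\chi^2 .
\]
So (3) holds with $\epsilon_0=\tfrac14$. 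However, $\int_M\omega^2\wedge\chi=2(a_1a_2+a_1a_3+a_2a_3)=-4<0$. This is impossible for any $\omega_1\in[\omega]$ with $\lambda[\chi^{-1}\omega_1]\in\Gamma_2$, since then $\omega_1^2\wedge\chi>0$ pointwise. So (1) fails.

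\textbf{Relation to the paper and the fix.} The paper's own $(3)\Rightarrow(1)$ has the same hidden assumption: it reuses $G_\epsilon$ built from $C_1=\int_M\omega^k\wedge\chi^{n-k}/\int_M\chi^n>0$, which was justified only under (2). To repair your argument, add the hypothesis $\int_M\omega^k\wedge\chi^{n-k}>0$ (the analogue of \eqref{weakerassumptionGabor1} in Corollary~\ref{cor 1}), or include $p=n$, $V=M$, in (3). With that change the rest of your proof goes through.
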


Let us compare the original conjecture:

\begin{conj}\label{conj mura}(Murakami \cite{Murakami2024})
   Let $(M,\chi)$ be a compact connected smooth K\"ahler manifold of complex dimension $n$. Let $\omega$ be a closed $(1,1)$-form. For any $0 <l<k\le n$, the following statements are equivalent:
   \begin{enumerate}
       \item There exists a form $\omega_1 \in [\omega]$ satisfying $\lambda[\chi^{-1}\omega_1]\in \Gamma_k$.
       \item For any $p$-dimensional irreducible analytic subvariety $V \subset M$,
       \begin{equation*}
           \int_V  (\omega+t\chi)^{k-n+p}\wedge  \chi^{n-k}  >0,
       \end{equation*}
       for all $n-k \le p <n$ and for all $t \ge 0$. 
   \end{enumerate}
\end{conj}

Our result requires projectivity and a uniform numerical condition, but no condition along a path.

In general, when $d=n$, \cite{lin2023solvabilitygeneralinversesigmak} proves that the cone condition in Theorem~\ref{MainThm2} is equivalent to the existence of a solution satisfying the cone condition. The analogous assertion is conjectured for $d<n$, but it has not yet been proved except for the complex Hessian and Hessian quotient equations.

If the projectivity assumption is removed, Zhang's counterexample \cite{Zhang2023} shows that, in general, an additional condition along a path is necessary. An alternative approach is to use geodesic and metric geometry---beginning with geodesics in the space of K\"ahler metrics \cite{ChenSpaceKahlerMetrics} and in the space of almost calibrated forms \cite{CollinsYauGeodesics,ChuCollinsLeeSpace}---together with the $d_1$-properness theory developed in \cite{ChuLeeHypercritical}, rather than relying solely on numerical criteria for subvarieties. The first two authors, together with other collaborators, are pursuing this direction.

Section~2 recalls the necessary background on Fang-Ma-G{\aa}rding polynomials and cone conditions. Section~3 proves the cone-inclusion lemmas. Section~4 proves the two uniform Sz\'{e}kelyhidi-type corollaries. Section~5 deduces Theorem~\ref{MainThm2} from Theorem~\ref{MainThm1}. Sections~6--9 prove Theorem~\ref{MainThm1}: Section~6 sets up the continuity path, Section~7 establishes mass concentration, Section~8 reduces closedness to a local extension theorem, and Section~9 proves the required extension results.

\paragraph{\textbf{Declaration on the use of AI.}}
The proofs of the cone-inclusion lemmas in Section~3 are revised versions of arguments generated by ChatGPT 5.6 Sol and subsequently checked and edited by the authors for mathematical clarity. ChatGPT 5.6 Sol was also used to identify gaps in an earlier version of the manuscript and to assist with grammar correction.

\section{Preliminaries}

All statements in this section are restated from
\cite{FangMaGarding} and \cite{LIN2023110038} in notation adapted to the present proof.

A polynomial is \emph{multi-affine} if its degree in each individual
variable is at most one.

\begin{definition}[Definition 3.1 of \cite{FangMaGarding}]
Let \[\sigma_k(x_1,\ldots,x_n)=\sum_{i_1<\cdots<i_k}x_{i_1}\cdots x_{i_k}\] be the \(k\)-th elementary symmetric
polynomial, with \(\sigma_0=1\).

If \(h(x)=\sum_{k=0}^{d}c_kx^k\) and \(n\ge d\), its
\emph{ordinary polarization} is
\[
  \Pol_n(h)(x_1,\ldots,x_n)
  =
  \sum_{k=0}^{d}c_k
  \frac{\sigma_k(x_1,\ldots,x_n)}{\binom nk}.
\]
It is the unique symmetric multi-affine polynomial satisfying
\[
  \Pol_n(h)(x,\ldots,x)=h(x).
\]

More generally, let
\(\boldsymbol{\kappa}=(\kappa_1,\ldots,\kappa_m)\in\mathbb N^m\), and
suppose \(h(x_1,\ldots,x_m)=\sum_{\alpha\le\boldsymbol{\kappa}}
c_\alpha x^\alpha\), where
\(\alpha=(\alpha_1,\ldots,\alpha_m)\),
\(x^\alpha=\prod_i x_i^{\alpha_i}\), and
\(\alpha\le\boldsymbol{\kappa}\) means
\(\alpha_i\le\kappa_i\) for every \(i\).  The
\(\boldsymbol{\kappa}\)-polarization is
\[
  \Pi^\uparrow_{\boldsymbol{\kappa}}(h)
  =
  \sum_{\alpha\le\boldsymbol{\kappa}}c_\alpha
  \prod_{i=1}^m
  \frac{\sigma_{\alpha_i}(x_{i1},\ldots,x_{i\kappa_i})}
       {\binom{\kappa_i}{\alpha_i}}.
\]
Its output is multi-affine in all the variables
\(\{x_{ij}:1\le i\le m,\ 1\le j\le\kappa_i\}\), and setting each
block \(x_{i1}=\cdots=x_{i\kappa_i}=x_i\) recovers \(h\).
\end{definition}

\begin{definition}
For $1\le k \le n$, write
\begin{equation*}
    \Gamma_k=\{\lambda \in \mathbb{R}^n: \sigma_i(\lambda)>0  \text{ for }1 \le i \le k\}.
\end{equation*}
\end{definition}

\begin{definition}[{Definitions 2.2 and 2.3} of \cite{FangMaGarding}]
For \(n\ge1\), write
\begin{align*}
  \Gamma_n^+
  &=\{x=(x_1,\ldots,x_n)\in\R^n:x_i>0\text{ for every }i\},\\
  \overline{\Gamma_n^+}
  &=\{x=(x_1,\ldots,x_n)\in\R^n:x_i\ge0\text{ for every }i\}.
\end{align*}
A set \(\Upsilon\subseteq\R^n\) satisfies the \emph{positive ray property}
\emph{(PRT)} if
\[
  \Upsilon+\overline{\Gamma_n^+}\subseteq \Upsilon.
\]
It satisfies the \emph{negative ray property} \emph{(NRT)} if, for
every \(x\in \Upsilon\), the set
\[
  (\{x\}-\overline{\Gamma_n^+})\cap \Upsilon
\]
is bounded.
\end{definition}

The following lemma is almost trivial, but it will be used later:
\begin{lem}
\label{strictPRT}
If $\Upsilon$ satisfies PRT, then $\bar\Upsilon+\Gamma_n^+\subseteq \Upsilon$.
\end{lem}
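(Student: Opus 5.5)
The plan is to use the openness of $\Gamma_n^+$ to absorb the small error incurred when a point of the closure $\bar\Upsilon$ is approximated by a point of $\Upsilon$. Fix $x\in\bar\Upsilon$ and $y\in\Gamma_n^+$, so that every coordinate $y_i>0$. Set
\[
\delta=\min_{1\le i\le n}y_i>0 .
\]

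First, since $x$ lies in the closure of $\Upsilon$, we can choose $z\in\Upsilon$ with $\|z-x\|_\infty<\delta$. In each coordinate this gives $|z_i-x_i|<\delta\le y_i$.

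Second, write
\[
x+y=z+\bigl((x-z)+y\bigr).
\]
The $i$-th coordinate of $w:=(x-z)+y$ is $y_i-(z_i-x_i)>y_i-\delta\ge 0$. Hence $w\in\Gamma_n^+\subseteq\overline{\Gamma_n^+}$.

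Finally, PRT gives $z+w\in\Upsilon+\overline{\Gamma_n^+}\subseteq\Upsilon$, that is, $x+y\in\Upsilon$. Since $x\in\bar\Upsilon$ and $y\in\Gamma_n^+$ were arbitrary, this proves $\bar\Upsilon+\Gamma_n^+\subseteq\Upsilon$.

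There is no real obstacle. The only point needing care is choosing the approximation radius $\delta$ from the strictly positive vector $y$, so that the perturbation $x-z$ is dominated coordinatewise by $y$. This is exactly why the statement requires the open cone $\Gamma_n^+$ and not its closure $\overline{\Gamma_n^+}$.
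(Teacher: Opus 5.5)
Your proof is correct and follows essentially the same route as the paper's: both approximate $x\in\bar\Upsilon$ by a point of $\Upsilon$ close enough that the error is absorbed by the strictly positive vector $y$, then apply PRT. The paper uses a convergent sequence $x^k\to x$ with a margin of $\tfrac12 y$, while you use an explicit radius $\delta=\min_i y_i$.
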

\begin{proof}
Suppose that $x^k\in \Upsilon$ converges to $x$ and that $y\in \Gamma_n^+$. Then, for sufficiently large $k$, $x+y \in x^k+\frac{1}{2}y+\Gamma_n^+ \subseteq x^k+\Gamma_n^+ \subseteq \Upsilon$.
\end{proof}

We now introduce the definition of a G{\aa}rding polynomial from \cite{FangMaGarding}. To distinguish this notion from the classical one, we call such a polynomial a Fang-Ma-G{\aa}rding polynomial. Since the original definition in Definitions~4.8 and 4.9 of \cite{FangMaGarding} is more complicated, we use another characterization here, which is equivalent to the original definition by Theorem~1.1(3) of \cite{FangMaGarding}.

\begin{definition}[Theorem 1.1(3) of
\cite{FangMaGarding}]
\label{FMGdefinition}
Let \(p\in\R[x_1,\ldots,x_n]\) be nonzero.  For a
multi-index
\(\alpha=(\alpha_1,\ldots,\alpha_n)\in\mathbb Z_{\ge0}^n\), write
\[
  \partial^\alpha p
  =
  \frac{\partial^{|\alpha|}p}
       {\partial x_1^{\alpha_1}\cdots\partial x_n^{\alpha_n}},
  \qquad |\alpha|=\alpha_1+\cdots+\alpha_n.
\]
Then \(p\) is called a \emph{Fang-Ma-G{\aa}rding polynomial} if and only if,
for every \(\alpha\) such that \(\partial^\alpha p\not\equiv0\),
\begin{enumerate}[label=\textup{(\roman*)}]
  \item the set \(\{\partial^\alpha p>0\}\) has a unique connected
  component \(\Upsilon_{\partial^\alpha p}\) satisfying PRT; and
  \item this component is contained in the corresponding components
  of all its first partial derivatives:
  \[
    \Upsilon_{\partial^\alpha p}
    \subseteq \Upsilon_{\partial_i\partial^\alpha p}
    \qquad(1\le i\le n).
  \]
\end{enumerate}
Here \(\Upsilon_h=\R^n\) when \(h\equiv c>0\), and by convention \(\Upsilon_0=\R^n\).
The component \(\Upsilon_p\) is called the \emph{Fang-Ma-G{\aa}rding component} of
\(p\), which is a generalization of the $\Upsilon$-stable component of \cite{LIN2023110038}.  We denote the class of such polynomials in \(n\) variables by
\(\Ga_n\), and put \(\Ga=\bigcup_{n\ge1}\Ga_n\).

\end{definition}

\begin{proposition}[{Proposition 10.2} of \cite{FangMaGarding}]
\label{product-Garding}
If \(p,q\in\Ga_n\), then their product \(pq\) also belongs to
\(\Ga_n\).
\end{proposition}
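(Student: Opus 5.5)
This proposition is quoted from Fang--Ma \cite{FangMaGarding}; within this paper we may simply invoke it. A self-contained argument would go as follows. Put \(r=pq\). Since \(p,q\in\Ga_n\), \(\Upsilon_p\) and \(\Upsilon_q\) are open, connected, and satisfy PRT. The candidate component for \(r\) is the connected component \(\Upsilon_r\) of \(\{r>0\}\) containing \(\Upsilon_p\cap\Upsilon_q\).

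The first step is to show \(\Upsilon_p\cap\Upsilon_q\) is nonempty and connected. It is nonempty because, for \(x\in\Upsilon_p\) and \(y\in\Upsilon_q\), the point \(\max(x,y)\) (taken coordinatewise) lies in both sets by PRT. For connectedness, take \(a,b\) in the intersection and set \(c=\max(a,b)\). The segments from \(a\) to \(c\) and from \(b\) to \(c\) move only in directions of \(\overline{\Gamma_n^+}\), so by PRT they stay in both \(\Upsilon_p\) and \(\Upsilon_q\).

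The second step is uniqueness. Let \(U\) be any component of \(\{r>0\}\) satisfying PRT. For \(x\in U\), the ray \(x+t(1,\dots,1)\), \(t\ge0\), stays in \(U\). By PRT, \(x+t\mathbf 1\) eventually enters \(\Upsilon_p\cap\Upsilon_q\): pick \(y\in\Upsilon_p\cap\Upsilon_q\); once \(t\) is large, \(x+t\mathbf 1\ge y\) coordinatewise. Hence \(U\) meets \(\Upsilon_r\), so \(U=\Upsilon_r\).

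The main work is (ii) together with the derivative conditions for all \(\partial^\alpha r\). By Leibniz, \(\partial^\alpha r=\sum_{\beta\le\alpha}\binom{\alpha}{\beta}\partial^\beta p\,\partial^{\alpha-\beta}q\). On \(\Upsilon_p\cap\Upsilon_q\) the nesting property gives \(\partial^\beta p>0\) (or \(\equiv0\)) and \(\partial^{\alpha-\beta}q>0\) (or \(\equiv0\)). So every term is nonnegative and at least one is positive whenever \(\partial^\alpha r\not\equiv0\).

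The obstacle is that the components of \(\{r>0\}\) may be strictly larger than \(\Upsilon_p\cap\Upsilon_q\), so nesting must be proved for the actual components \(\Upsilon_{\partial^\alpha r}\). The fix I would try is to show \(\Upsilon_r=\Upsilon_p\cap\Upsilon_q\) exactly. Suppose a point of \(\partial(\Upsilon_p\cap\Upsilon_q)\) lies in \(\{r>0\}\). On this boundary either \(p=0\) or \(q=0\), so \(r=0\) there, a contradiction. Hence \(\Upsilon_p\cap\Upsilon_q\) is relatively closed in \(\{r>0\}\), and since it is also open and connected it is a full component. The same argument applied to \(\partial^\alpha r\) is harder, since \(\partial^\alpha r\) is a sum rather than a product; for that I would fall back on the cited Proposition~10.2 of \cite{FangMaGarding}.
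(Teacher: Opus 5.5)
Your proposal matches the paper, which gives no argument of its own and simply invokes Proposition~10.2 of \cite{FangMaGarding}, exactly as your first sentence does. The self-contained sketch you add is sound as far as it goes: uniqueness of a PRT component follows from coordinatewise maxima, and $\Upsilon_{pq}=\Upsilon_p\cap\Upsilon_q$ because $pq$ vanishes on the boundary of the intersection. As you note yourself, it does not establish PRT and nesting for the components of $\{\partial^\alpha(pq)>0\}$ with $\alpha\neq 0$, so the citation is what carries the proof.
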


\begin{theorem}[{Theorem 8.13} of \cite{FangMaGarding}]
\label{kappa-polarization}
Let \(p\in\Ga_n\) have multidegree at most
\(\boldsymbol{\kappa}\), meaning
\(\deg_{x_i}p\le\kappa_i\) for every \(i\).  If its Fang-Ma-G{\aa}rding
component \(\Upsilon_p\) satisfies NRT, then
\[
  \Pi^\uparrow_{\boldsymbol{\kappa}}(p)\in\Ga.
\]
\end{theorem}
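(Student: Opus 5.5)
The plan is to prove the statement one variable at a time. The polarization \(\Pi^\uparrow_{\boldsymbol{\kappa}}\) is a composition of single-block polarizations, each replacing one variable \(x_i\) by \(\kappa_i\) new variables and acting as the identity on the others. It therefore suffices to prove the following claim.

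\emph{Claim.} Suppose \(p\in\Ga_n\) has \(\deg_{x_1}p\le k\) and \(\Upsilon_p\) satisfies NRT. Let \(P(y_1,\ldots,y_k,x')\) be the polarization of \(p\) in \(x_1\) alone. Then \(P\in\Ga_{n+k-1}\), and \(\Upsilon_P\) again satisfies NRT.

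The NRT conclusion is what lets us iterate over the remaining blocks. It should follow from NRT of \(\Upsilon_p\) together with the inclusion \(\Upsilon_P\subseteq\{\text{a translate of the diagonal preimage of }\Upsilon_p\}\) established below.

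The proof of the claim will be an induction on the total degree of \(p\), driven by two derivative identities. Write \(p=\sum_j q_j(x')x_1^j\), so that \(P=\sum_j q_j(x')\,\sigma_j(y)/\binom kj\).

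\begin{enumerate}[label=\textup{(\alph*)}]
  \item Derivatives in the \(x'\)-variables commute with polarization: \(\partial_{x'}^\beta P\) is the polarization of \(\partial_{x'}^\beta p\).
  \item A derivative in a new variable lowers the block size: \(\partial_{y_1}P=\tfrac1k\,\mathrm{Pol}_{k-1}(\partial_{x_1}p)(y_2,\ldots,y_k,x')\).
\end{enumerate}

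Every \(\partial^\alpha p\) is Fang--Ma--G{\aa}rding by Definition~\ref{FMGdefinition}, and its component inherits NRT from \(\Upsilon_p\), since \(\Upsilon_p\subseteq\Upsilon_{\partial^\alpha p}\) and NRT passes to the relevant slices. Hence, by induction, every proper derivative of \(P\) lies in \(\Ga\) with an NRT component. This reduces the claim to the top-level conditions: existence and uniqueness of a PRT component \(\Upsilon_P\) of \(\{P>0\}\), and the inclusion \(\Upsilon_P\subseteq\Upsilon_{\partial_{y_i}P}\) and \(\Upsilon_P\subseteq\Upsilon_{\partial_{x_j}P}\).

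For the top level I will use multi-affinity. Write \(P=A+y_1B\), where \(B=\partial_{y_1}P\) and \(A\) are independent of \(y_1\). By induction, \(B\) has a PRT component \(\Upsilon_B\). The candidate component is
\[
\Upsilon_P:=\{(y,x'):\ (y_2,\ldots,y_k,x')\in\Upsilon_B,\ P(y,x')>0\}.
\]
Over each point of \(\Upsilon_B\) the fibre is an open half-line \(y_1>-A/B\), so \(\Upsilon_P\) is connected. PRT in \(y_1\) is immediate, and PRT in the other directions comes from PRT of \(\Upsilon_B\) combined with a monotonicity argument. To show the definition does not depend on the choice of \(y_1\), so that it also gives \(\Upsilon_P\subseteq\Upsilon_{\partial_{y_i}P}\) for every \(i\), I will compare with the diagonal. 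On \(y_1=\cdots=y_k=x_1\), the set \(\Upsilon_P\) restricts to \(\Upsilon_p\), which is given. Any point of \(\Upsilon_P\) can be moved to the diagonal inside \(\{P>0\}\) by increasing coordinates, using PRT of the lower-degree components. By symmetry of \(P\) in \(y\), this forces the definitions built from different \(y_i\) to agree.

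The main obstacle will be \textbf{uniqueness} of the PRT component, together with the inclusions into the \(\Upsilon_{\partial_{x_j}P}\). One must rule out another component \(U\) of \(\{P>0\}\) with PRT, for example one lying over points where \(B\le0\). My first attempt is as follows. Any PRT set \(U\) contains points with all \(y_i\) and all \(x'\) coordinates arbitrarily large along a positive ray. Pushing such a point far out along the diagonal direction \((1,\ldots,1)\) lands in \(\Upsilon_P\); this uses that \(\Upsilon_p\), being PRT, contains the far end of every positive ray. Connectivity along that ray inside \(\{P>0\}\) then needs \(P\) to have no zeros along it. Here NRT is essential: it bounds how far back a point of \(\Upsilon_p\) can slide, which controls the real roots of the restriction of \(P\) to positive rays, in the spirit of a Grace--Walsh--Szeg\H{o} argument in which polarization preserves root location. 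If this root-control step works, the inclusions into the \(\Upsilon_{\partial_{x_j}P}\) follow in the same way from (a) and the given inclusions for \(p\).
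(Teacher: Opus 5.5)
The paper does not prove this statement; it imports it from Fang--Ma. Your sketch therefore has to stand on its own, and it has a genuine gap. The step you call the main obstacle is trivial, while the step you treat as routine carries all the content.

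\textbf{Uniqueness is automatic.} Let $U,V$ be PRT components of $\{P>0\}$, with $u\in U$ and $v\in V$. Their coordinatewise maximum $w$ lies in $u+\overline{\Gamma^+}\subseteq U$ and in $v+\overline{\Gamma^+}\subseteq V$, so $U=V$. Likewise, a ray from a point of a PRT set never leaves that set, so no root control is needed there.

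\textbf{Existence is what is missing.} Write your candidate as $S=\{y_1>\rho\}$ over $\Upsilon_B$, with $\rho=-A/B$. For any other variable $z$ (a $y_j$ or an $x'$-coordinate), PRT of $S$ in direction $z$ is equivalent to $\partial_z\rho\le0$ on $\Upsilon_B$. That is the Rayleigh-type inequality
\[
\partial_{y_1}P\,\partial_zP-P\,\partial_{y_1}\partial_zP=B\,\partial_zA-A\,\partial_zB\ge0\qquad\text{on }\R\times\Upsilon_B,
\]
equivalently $\partial_zP\ge0$ on the graph $\{y_1=\rho\}$. This is a boundary form of the very inclusions $\Upsilon_P\subseteq\Upsilon_{\partial_zP}$ you are trying to prove. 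It does not follow from PRT of $\Upsilon_B$ by a ``monotonicity argument''. Your symmetry/diagonal step is also circular, since moving a point of $S$ up to the diagonal inside $\{P>0\}$ presupposes exactly this inequality.

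You further need $S$ to be an entire component of $\{P>0\}$. That means $P=A\le0$ on $\R\times\partial\Upsilon_B$ (where $B=0$), which you never check. It is not automatic from the construction. For $P=1+y_1y_2$, the polarization of $1+x^2$, the set $S=\{y_2>0,\ 1+y_1y_2>0\}$ is connected, but its component in $\{P>0\}$ is the non-PRT region between the branches of the hyperbola. These are the places where NRT, or some substitute, must actually be used, and the proposal does not use it there.

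\textbf{The induction does not run as set up.} You assert that each $\Upsilon_{\partial^\alpha p}$ inherits NRT from $\Upsilon_p$ because $\Upsilon_p\subseteq\Upsilon_{\partial^\alpha p}$. But NRT passes to subsets, not to supersets. For $p=x_1x_2\in\Ga_2$, $\Upsilon_p=\Gamma_2^+$ satisfies NRT, while $\partial_{x_2}p=x_1$ has component $\{x_1>0\}\subset\R^2$, which does not. Hence your Claim, with its NRT hypothesis, cannot be applied to $\partial_{x_1}p$ or $\partial^\beta_{x'}p$. Yet that is what (a) and (b) need in order to supply the lower-order conditions for $P$. You would need a weaker inductive hypothesis that genuinely descends to derivatives and still suffices at the top level.

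The NRT conclusion for $\Upsilon_P$, which you need to iterate over blocks, is likewise unproved. The ``translate of the diagonal preimage'' inclusion is never made precise. Comparing with the diagonal by raising coordinates only bounds $\max_i y_i$ and the $x'$-coordinates of points of $(z-\overline{\Gamma^+})\cap\Upsilon_P$ from below, not each $y_i$ separately. As written, the proposal reduces the theorem to its core difficulty and leaves that core open.
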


We next recall the definition introduced by Lin.

\begin{definition}[{Definitions 2.1} of \cite{LIN2023110038}]
\label{right-Noetherian}
For a nonconstant univariate polynomial \(h\), let \(r(h)\) denote its
largest real root, provided such a root exists.  If
\(\deg h=d\), its \emph{root sequence} is
\[
  R(h)=\bigl(r(h),r(h'),\ldots,r(h^{(d-1)})\bigr),
\]
provided all entries exist.  The polynomial \(h\) is
\emph{right-Noetherian} if all these entries exist
and
\[
  r(h)\ge r(h')\ge\cdots\ge r(h^{(d-1)}).
\]

The polynomial \(h\) is
\emph{strictly right-Noetherian} if all these entries exist
and
\[
  r(h)> r(h')\ge\cdots\ge r(h^{(d-1)}).
\]

The polynomial \(h\) is
\emph{strongly strictly right-Noetherian} if all these entries exist
and
\[
  r(h)>r(h')>\cdots>r(h^{(d-1)}).
\]
\end{definition}

The original paper \cite{LIN2023110038} only defines strictly right-Noetherian polynomials. In this paper, we need a stronger condition, which is called strongly strictly right-Noetherian in Definition \ref{right-Noetherian}.

The following is immediate from Section~9.2 of \cite{FangMaGarding}.
\begin{lemma}[Section 9.2 of \cite{FangMaGarding}]
\label{FangMaLinCorrespondence}
A monic univariate polynomial $p(x)\in \Ga_1$ if and only if it is right-Noetherian.
\end{lemma}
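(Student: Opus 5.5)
The plan is to unwind Definition~\ref{FMGdefinition} for $n=1$ and check directly that, for a monic polynomial, it says exactly what Definition~\ref{right-Noetherian} says. Let $p$ be monic of degree $d\ge1$. In one variable the only derivatives are $\partial^\alpha p=p^{(k)}$ for $k\ge0$. These are nonzero exactly for $0\le k\le d$, and each $p^{(k)}$ has positive leading coefficient $d!/(d-k)!$. For $k>d$ we have $p^{(k)}\equiv 0$, and the convention $\Upsilon_0=\R$ makes condition (ii) trivial there. For $k=d$, $p^{(d)}=d!>0$ is constant, so $\Upsilon_{p^{(d)}}=\R$. So everything reduces to understanding the sets $\{p^{(k)}>0\}\subseteq\R$ for $0\le k\le d-1$.

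\textbf{Step 1: condition (i) always holds, and the component is explicit.} In $\R$, the positive ray property says that $x+[0,\infty)$ stays in the set whenever $x$ does. Hence a connected open subset of $\R$ has PRT if and only if it is unbounded above, i.e.\ it is of the form $(a,\infty)$ or all of $\R$. Since $p^{(k)}$ has positive leading coefficient, it is positive on $(r(p^{(k)}),\infty)$ when a largest real root $r(p^{(k)})$ exists, and positive everywhere otherwise. Any other component of $\{p^{(k)}>0\}$ is a bounded interval. Therefore a unique PRT component exists, namely
\[
\Upsilon_{p^{(k)}}=(r(p^{(k)}),\infty)\quad\text{if }p^{(k)}\text{ has a real root},\qquad \Upsilon_{p^{(k)}}=\R\quad\text{otherwise}.
\]

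\textbf{Step 2: condition (ii) is equivalent to the root sequence existing and being non-increasing.}

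First suppose $p\in\Ga_1$. We show that every $p^{(k)}$ with $k\le d-1$ has a real root. Suppose some $p^{(k)}$ with $k\le d-1$ has none. Then $\Upsilon_{p^{(k)}}=\R$, and the inclusions $\Upsilon_{p^{(k)}}\subseteq\Upsilon_{p^{(k+1)}}\subseteq\cdots$ force $\Upsilon_{p^{(d-1)}}=\R$. But $p^{(d-1)}$ is linear with positive slope, so $\Upsilon_{p^{(d-1)}}=(r(p^{(d-1)}),\infty)\neq\R$, a contradiction. Hence all entries of $R(p)$ exist. The inclusion $(r(p^{(k)}),\infty)\subseteq(r(p^{(k+1)}),\infty)$ is then exactly $r(p^{(k)})\ge r(p^{(k+1)})$, so $p$ is right-Noetherian.

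Conversely, suppose $p$ is right-Noetherian. All roots exist, and the inequalities $r(p^{(k)})\ge r(p^{(k+1)})$ give $\Upsilon_{p^{(k)}}\subseteq\Upsilon_{p^{(k+1)}}$ for $k\le d-2$. The remaining inclusions, into $\Upsilon_{p^{(d)}}=\R$ and $\Upsilon_0=\R$, are automatic.

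\textbf{Main obstacle.} Nothing here is deep. The only real care is bookkeeping at the two ends of the derivative range: the constant $p^{(d)}$ and the zero derivatives, handled by the conventions in Definition~\ref{FMGdefinition}, and the existence of roots, handled by the downward induction from the linear $p^{(d-1)}$ in Step 2. One should also note that for $d=0$, i.e.\ $p=1$, the statement is vacuous, since right-Noetherian is only defined for nonconstant polynomials. So the lemma is understood for monic $p$ of degree at least one.
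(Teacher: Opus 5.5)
Your proof is correct. The paper gives no argument of its own: it cites Section~9.2 of \cite{FangMaGarding} and calls the lemma immediate.

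You instead verify the lemma directly from the characterization in Definition~\ref{FMGdefinition}, which the paper adopts as its working definition via Theorem~1.1(3) of \cite{FangMaGarding}. This makes your argument self-contained within the paper's framework. It also makes explicit the two points the citation leaves implicit:
\begin{enumerate}
\item in one variable, condition~(i) holds automatically, and the Fang--Ma--G{\aa}rding component of $p^{(k)}$ is $(r(p^{(k)}),\infty)$;
\item the existence of every entry of the root sequence is forced by the chain of inclusions, which terminates at the linear polynomial $p^{(d-1)}$, whose component is never all of $\R$.
\end{enumerate}

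There is one small inaccuracy in Step~1. The components of $\{p^{(k)}>0\}$ other than the top one need not be bounded. For example, for $p^{(k)}=x^2-1$ the component $(-\infty,-1)$ is unbounded below. What is true, and all you need, is that these components are bounded above, so they fail PRT.
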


The following theorem will be used:

\begin{theorem}[{Theorem 2.1} of \cite{LIN2023110038}]
\label{log-concavity}
Let \(p\) be a right-Noetherian polynomial of degree \(n\). Then
\[
       \frac{p(x)p''(x)}{p'(x)^2} \le \frac{n-1}{n}
\]
for all \(x>r(p)\).
\end{theorem}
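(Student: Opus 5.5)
The inequality is equivalent to a monotonicity statement for the ratio $g=p/p'$, and I would prove it by induction on $n=\deg p$, comparing $g$ with the analogous ratio $h=p'/p''$ for the derivative. The leading coefficient is positive, so I take $p$ monic.

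\textbf{Setup.} On $(r(p),\infty)$ every derivative $p^{(j)}$ with $j<n$ is positive. Indeed, right-Noetherianity gives $r(p^{(j)})\le r(p)$, and $p^{(j)}$ has positive leading coefficient with no root to the right of $r(p^{(j)})$. So on this interval
\[
g=\frac{p}{p'}, \qquad h=\frac{p'}{p''}
\]
are positive and smooth. A direct computation gives
\[
g'=1-\frac{pp''}{p'^2}=1-\frac{g}{h}.
\]
Hence the claim $pp''/p'^2\le (n-1)/n$ is equivalent to
\[
\varphi:=\frac{g}{h}\le\frac{n-1}{n}\quad\text{on }(r(p),\infty).
\]
(Equivalently, $p^{1/n}$ is concave there.)

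\textbf{Base case and induction hypothesis.} For $n=1$ we have $p''\equiv 0$, so the statement is trivial. For $n\ge 2$, the derivative $p'/n$ is monic, right-Noetherian of degree $n-1$, and satisfies $r(p')\le r(p)$. The induction hypothesis applied to it (trivially true when $n-1=1$) gives $p'p'''/p''^2\le (n-2)/(n-1)$ on $(r(p'),\infty)\supseteq(r(p),\infty)$. In terms of $h$ this reads
\[
h'=1-\frac{p'p'''}{p''^2}\ge\frac{1}{n-1}.
\]

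\textbf{Differential inequality for $\varphi$.} Differentiating,
\[
\varphi'=\frac{g'h-gh'}{h^2}=\frac{1-\varphi-\varphi h'}{h}\le\frac{1-\varphi\cdot\frac{n}{n-1}}{h}.
\]
Thus $\varphi'<0$ at every point where $\varphi>(n-1)/n$. Suppose $\varphi(x_0)>(n-1)/n$ for some $x_0>r(p)$. The set where $\varphi$ exceeds the threshold is then invariant as we move left, and on it $\varphi$ increases as $x$ decreases. Therefore $\varphi$ stays above $\varphi(x_0)$ on $(r(p),x_0]$, which gives
\[
\liminf_{x\to r(p)^+}\varphi\ \ge\ \varphi(x_0)\ >\ \frac{n-1}{n}.
\]

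\textbf{Boundary behaviour (main obstacle).} The step needing care is computing $\lim_{x\to r(p)^+}\varphi$, since $p$, $p'$, $p''$ may share the root $r=r(p)$. I would split into two cases.

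If $r(p')<r$, then $p'(r)>0$ and $p''(r)\ge 0$. So $g\to 0$ while $h\to p'(r)/p''(r)\in(0,\infty]$, and hence $\varphi\to 0$.

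If $r(p')=r$, let $m\ge 2$ be the multiplicity of $r$ as a root of $p$. Then $p'$ and $p''$ vanish there to orders $m-1$ and $m-2$. This gives $g\sim (x-r)/m$ and $h\sim (x-r)/(m-1)$, so
\[
\varphi\to\frac{m-1}{m}\le\frac{n-1}{n},
\]
because $m\le n$. (When $m=2$, $h\to 0$ still holds, since $p''(r)\neq 0$ and $p'(r)=0$.)

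In both cases the limit is at most $(n-1)/n$, contradicting the lower bound obtained in the previous step. Hence $\varphi\le (n-1)/n$ on $(r(p),\infty)$, which is the asserted inequality.
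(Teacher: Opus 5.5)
Your argument is correct and complete. The paper does not prove this statement at all: it quotes it from Lin (Theorem~2.1 of \cite{LIN2023110038}) and uses it in the proof of Lemma~\ref{perturbation}, at a point $x_0>r(f)$, which is exactly the range your proof covers. So there is no in-paper route to compare against, and what you give is a self-contained replacement for the citation.

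Each step of your structure checks out:
\begin{itemize}
\item You reduce to $\varphi=pp''/p'^2$.
\item You obtain $h'\ge 1/(n-1)$ from the inductive hypothesis applied to $p'$.
\item The bound $\varphi'\le\bigl(1-\tfrac{n}{n-1}\varphi\bigr)/h$ forces any excess of $\varphi$ over $(n-1)/n$ to persist as $x$ decreases toward $r(p)$.
\item You then contradict the boundary value of $\varphi$ at $r(p)$.
\end{itemize}
The sign information you need ($p,p',p''>0$, hence $g,h,\varphi>0$ on $(r(p),\infty)$) follows from $r(p'')\le r(p')\le r(p)$. Positivity of $\varphi$ is what lets you multiply $h'\ge 1/(n-1)$ by $\varphi$. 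Also, $p'$ is right-Noetherian because its root sequence is a tail of that of $p$.

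Two cosmetic remarks. First, justify the reduction to monic $p$ by scale invariance: both the root sequence and $pp''/p'^2$ are unchanged under $p\mapsto cp$ with $c\neq0$. Positivity of the leading coefficient is not part of the hypothesis. Second, your two boundary cases merge into one. Let $m\ge1$ be the multiplicity of $r=r(p)$ as a root of $p$, and write $p=(x-r)^m a(x)$ with $a(r)\neq0$. Then directly $\varphi\to m(m-1)/m^2=(m-1)/m\le(n-1)/n$, and $m=1$ is your first case.
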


Next let us recall the definition of \(\Upsilon\)-dominance by \cite{LIN2023110038}:

\begin{definition}[{Definition 1.3} of \cite{LIN2023110038}]
Let \(f(x)\) and \(g(x)\) be right-Noetherian monic polynomials of degree \(n\). We write \(f\Udom g\) if
\[
       r(f^{(k)})\geq r(g^{(k)})\qquad(0\leq k\leq n-1).
\]
\end{definition}

\begin{theorem}[Theorem 10.10 of \cite{FangMaGarding}]
\label{UpsilonDominance}
Let \(f\) and \(g\) be right-Noetherian monic polynomials of the same degree \(d \le n\), and let \(F\) and \(G\) be their polarizations on $\mathbb{R}^n$. Then
\[
       g\Udom f
       \quad\Longleftrightarrow\quad
       \Upsilon_G\subseteq \Upsilon_F.
\]
\end{theorem}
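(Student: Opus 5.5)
The statement is the two-sided comparison between root sequences and Fang-Ma-G{\aa}rding components. I would prove both directions by induction on the degree $d$, using one structural fact about the multi-affine polarizations.

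\textbf{Structural fact.} Write $F=\Pol_n(f)$. Differentiating in $x_1,\ldots,x_k$ gives, up to a positive constant,
\[
\partial_1\cdots\partial_k F=\Pol_{n-k}\bigl(f^{(k)}\bigr)(x_{k+1},\ldots,x_n).
\]
Indeed, $\partial_{x_1}\sigma_j(x)=\sigma_{j-1}(\hat x_1)$, and a binomial computation matches the coefficients with those of $f^{(k)}$. The polarizations of $f,f',\ldots$ are Fang-Ma-G{\aa}rding by Lemma~\ref{FangMaLinCorrespondence} and Theorem~\ref{kappa-polarization}, so each has a well-defined component. Since $F(x)=x_1\,\partial_1F(\hat x_1)+F|_{x_1=0}$ is affine in $x_1$, I would show that $\Upsilon_{\partial_1F}$ is exactly the projection of $\Upsilon_F$ forgetting $x_1$. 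One inclusion is condition (ii) of Definition~\ref{FMGdefinition}. For the other, if $y\in\Upsilon_{\partial_1F}$, then $F(T,y)>0$ for $T\gg0$. These points lie in $\Upsilon_F$ by PRT, after connecting them to the diagonal ray $\{t\mathbf 1: t>r(f)\}$, which lies in $\Upsilon_F$ because $F(t\mathbf 1)=f(t)$ and $f$ is monic and right-Noetherian.

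\textbf{($\Leftarrow$).} Assume $\Upsilon_G\subseteq\Upsilon_F$. Restricting to the diagonal gives $f(t)>0$ for every $t>r(g)$, hence $r(f)\le r(g)$. Now let $y\in\Upsilon_{\partial_1\cdots\partial_kG}$. By the structural fact applied $k$ times, $(T,\ldots,T,y)\in\Upsilon_G\subseteq\Upsilon_F$ for large $T$. So $y$ lies in the projection, which equals $\Upsilon_{\partial_1\cdots\partial_kF}$. Thus the inclusion passes to the derivative polarizations, and the diagonal argument applied to them gives $r(f^{(k)})\le r(g^{(k)})$ for every $k$, i.e.\ $g\Udom f$.

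\textbf{($\Rightarrow$).} Assume $g\Udom f$. Since $f'$ and $g'$ are right-Noetherian (up to normalization) and $g'\Udom f'$, induction on the degree gives $\Upsilon_{\partial_iG}\subseteq\Upsilon_{\partial_iF}$ for every $i$. Fix $x\in\Upsilon_G$. Then $\hat x_i\in\Upsilon_{\partial_iF}$, so $F$ is strictly increasing in each coordinate near $x$, as are all the lower derivatives. I would connect $x$ to the diagonal by the path $x+s\mathbf 1$, $s\ge0$, which stays in $\Upsilon_G$ by PRT and ends in $\Upsilon_F$. It then suffices to show $F>0$ along the path, since the path lies in one component of $\{F>0\}$, which must be $\Upsilon_F$.

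\textbf{Main obstacle.} The hardest step is this positivity: showing that $\{F\le0\}$ cannot be reached along the path. On the line $x+s\mathbf 1$, $G$ stays positive while $F$ has positive directional derivative $\sum_i\partial_iF$. The danger is that $F$ could start negative. I would first try to compare the largest real roots of the univariate restrictions $s\mapsto F(x+s\mathbf 1)$ and $s\mapsto G(x+s\mathbf 1)$. For this I would try to show that these restrictions are again right-Noetherian, with derivatives given by the restrictions of $\sum\partial_i F$ and $\sum\partial_i G$. Then induction, together with the ordering $r(f)\le r(g)$ at the top level, would force the root of the $F$-restriction to lie to the left of that of the $G$-restriction. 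If that fails, the fallback is to deform $g$ to $f$ through a one-parameter family of right-Noetherian polynomials that preserves $\Udom$ (for instance by moving roots of successive derivatives monotonically), and to track the components continuously using the uniqueness in Definition~\ref{FMGdefinition}(i).
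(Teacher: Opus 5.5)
The paper does not prove this statement; it quotes it from Fang--Ma, with Lin's Theorem~1.3 covering $d=n$. So your argument has to stand on its own.

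Your ($\Leftarrow$) direction is fine. The ray $\{t\mathbf 1:t>r(g)\}$ lies in $\Upsilon_G$ by PRT. The projection property follows from your large-$T$ argument, for instance via the path $(T,y+s\mathbf 1)$, $s\ge0$, with $T$ large. That property pushes the inclusion down to $\Pol_{n-k}(g^{(k)})$ and $\Pol_{n-k}(f^{(k)})$, which essentially reproves Lemma~\ref{derivative-dominance}. Restricting to the diagonal then gives $r(f^{(k)})\le r(g^{(k)})$.

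The ($\Rightarrow$) direction has a genuine gap, exactly at the step you call the main obstacle. After induction you know $\partial_IF(x)>0$ for all nonempty $I$, and $s\mapsto F(x+s\mathbf 1)$ is increasing. So ``$F>0$ along the path'' is equivalent to $F(x)>0$, which is the entire content of the theorem, and neither proposed mechanism delivers it.
The inclusions $\Upsilon_{\partial_iG}\subseteq\Upsilon_{\partial_iF}$ carry only sign information, and $r(f)\le r(g)$ concerns only the line $\{t\mathbf 1\}$. Nothing in either controls the value of $F$ at an off-diagonal point.
Comparing the largest roots of $F(x+s\mathbf 1)$ and $G(x+s\mathbf 1)$ is a reformulation, indeed a strengthening, of the goal. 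Orderings among the roots of their derivatives cannot settle it, because the top root is governed by the constant term $F(x)$, which the derivatives do not determine.
The deformation fallback has the same defect: uniqueness of the PRT component yields no monotonicity in the parameter. At the first parameter where $x$ reaches $\partial\Upsilon_{F_\tau}$, you would need $\partial_\tau F_\tau(x)\ge0$. If you move a single root $r_k$ with $k\ge1$, then $\partial_\tau f_\tau$ is a nonnegative multiple of a degree-$k$ polynomial $h$ with root sequence $(r_0,\dots,r_{k-1})$. So you would need $\Pol_n(h)\ge0$ on $\Upsilon_{F_\tau}$, a cross-degree dominance statement that is no easier than the theorem.

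The missing ingredient is quantitative: an inequality showing that the diagonal is the extremal configuration on $\Upsilon_G$. Already for $d=n=2$ this is visible. Write $F=(x_1-r_1)(x_2-r_1)-c$ and $G=(x_1-s_1)(x_2-s_1)-e$. Then $g\Udom f$ means $\delta:=s_1-r_1\ge0$ and $\sqrt e+\delta\ge\sqrt c$. On $\Upsilon_G$ put $y_i=x_i-s_1$, so that $y_i>0$ and $y_1y_2>e$. Then
\[
F(x)=y_1y_2+\delta(y_1+y_2)+\delta^2-c\ \ge\ \bigl(\sqrt{y_1y_2}+\delta\bigr)^2-c\ >\ (\sqrt e+\delta)^2-c\ \ge\ 0 .
\]
Positivity rests on AM--GM, which is sharp exactly on the diagonal. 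This is why the purely diagonal condition $r(f)\le r(g)$ suffices. The AM--GM step is an additional quantitative input, not a consequence of the sign data in the cone inclusions.

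In general you need an inequality of this type. Candidates are Maclaurin/Newton-type inequalities for the polarizations, or Lin's log-concavity bound (Theorem~\ref{log-concavity}) built into the induction. Your proposal contains no such step, so $g\Udom f\Rightarrow\Upsilon_G\subseteq\Upsilon_F$ remains unproved.
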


The $d=n$ case of Theorem \ref{UpsilonDominance} was proved as Theorem 1.3 of \cite{LIN2023110038}.

\begin{lemma}[Lemma 6.6(3) of \cite{FangMaGarding}]
\label{derivative-dominance}
Let \(f\) and \(g\) be right-Noetherian monic polynomials of degrees \(d_f\le n\) and \(d_g\le n\), and let \(F\) and \(G\) be their polarizations. Then
\[
       \Upsilon_G\subseteq \Upsilon_F
       \quad\Longrightarrow\quad
       \Upsilon_{\partial_I G} \subseteq \Upsilon_{\partial_I F}
\]
for every subset \(I\subseteq\{1,2,\ldots,n\}\), where
\(\partial_I=\prod_{i\in I}\partial_i\).
\end{lemma}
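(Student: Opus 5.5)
The plan is to reduce everything to one structural fact: for a multi-affine Fang-Ma-G{\aa}rding polynomial, the component of a first partial derivative $\partial_i p$ is the cylinder obtained by projecting $\Upsilon_p$ along $e_i$. The inclusion $\Upsilon_G\subseteq\Upsilon_F$ is preserved by projection, so iterating over the indices in $I$ should give the lemma.

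\emph{Step 1 (derivatives of polarizations).} Since $\partial_i\sigma_k(x)=\sigma_{k-1}(\hat x_i)$ and $\binom nk=\frac nk\binom{n-1}{k-1}$, we get
\[
\partial_i\Pol_n(h)=\tfrac1n\Pol_{n-1}(h')(\hat x_i).
\]
Inductively, $\partial_I F$ is a positive multiple of $\Pol_{n-|I|}(f^{(|I|)})$ in the variables outside $I$, and it is independent of $x_I$. Monic right-Noetherian $f$ gives $F\in\Ga_n$ by Theorem~\ref{kappa-polarization} and Lemma~\ref{FangMaLinCorrespondence}. The NRT hypothesis there is checked on the one-variable component $(r(f),\infty)$. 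By Definition~\ref{FMGdefinition}, every nonzero $\partial_I F$ then has a component satisfying PRT, and these components are nested.

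\emph{Step 2 (projection identity).} Let $p\in\Ga_n$ be multi-affine and write $p=a\,x_i+b$ with $a=\partial_ip$, where $a$ and $b$ do not depend on $x_i$. I will show that, if $a\not\equiv0$,
\[
\Upsilon_{\partial_i p}=\pi_i(\Upsilon_p)\times\R e_i ,
\]
where $\pi_i$ is the projection killing the $i$-th coordinate.

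For ``$\supseteq$'': condition (ii) gives $\Upsilon_p\subseteq\Upsilon_{\partial_ip}$, and $\Upsilon_{\partial_ip}$ is $x_i$-invariant because $a$ does not involve $x_i$.

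For ``$\subseteq$'': on $\Upsilon_{\partial_ip}$ we have $a>0$, so $T(y)=-b(y)/a(y)$ is continuous. Hence the region
\[
W=\{y+te_i:\ y\in\Upsilon_{\partial_ip},\ t>T(y)\}
\]
is connected, since it is the region above a continuous graph over the connected set $\Upsilon_{\partial_ip}$. It is contained in $\{p>0\}$. It meets $\Upsilon_p$, because $\Upsilon_p\subseteq\Upsilon_{\partial_ip}$ and, by PRT, points of $\Upsilon_p$ may be pushed up in $x_i$. So $W$ lies in the connected component $\Upsilon_p$, which shows $\pi_i(\Upsilon_{\partial_ip})\subseteq\pi_i(\Upsilon_p)$.

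Applying this to $p=\partial_J F$ for $J\subseteq I$ and iterating, I expect
\[
\Upsilon_{\partial_IF}=\pi_I(\Upsilon_F)\times\R^{I}
\]
whenever $\partial_IF\not\equiv0$. Monotonicity then gives $\Upsilon_{\partial_IG}\subseteq\Upsilon_{\partial_IF}$, provided $\partial_IG\not\equiv0$.

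\emph{Step 3 (degenerate cases, the main obstacle).}
\begin{itemize}
\item If $\partial_IF$ is a positive constant or zero, then $\Upsilon_{\partial_IF}=\R^n$ and there is nothing to prove.
\item The dangerous case is $|I|>d_g$ but $|I|\le d_f$. Then $\Upsilon_{\partial_IG}=\R^n$ by convention, while $\Upsilon_{\partial_IF}$ is a proper subset.
\end{itemize}
I will rule this case out by showing that $\Upsilon_G\subseteq\Upsilon_F$ forces $d_g\ge d_f$. The first thing I would try is an asymptotic comparison. Scaling, $\lambda^{-1}\Upsilon_F$ converges to the component of the top-degree part, namely the cone $\Gamma_{d_f}$ (by the same projection identity applied to $\sigma_{d_f}$). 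Similarly $\lambda^{-1}\Upsilon_G$ converges to $\Gamma_{d_g}$. The inclusion passes to these recession cones, giving $\Gamma_{d_g}\subseteq\overline{\Gamma_{d_f}}$.

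Testing on the vector $(1,\dots,1,-N,\dots,-N)$ with exactly $d_g$ entries equal to $1$ then shows $d_g\ge d_f$. This vector lies in $\Gamma_{d_g}$ for all large $N$, but not in $\overline{\Gamma_{d_f}}$ when $d_f>d_g$. Making the limit of components rigorous, using NRT and PRT to control the rescaled sets, is where I expect the most care to be needed.
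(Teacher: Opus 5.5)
The paper gives no proof of this lemma; it quotes Lemma 6.6(3) of \cite{FangMaGarding}, so your argument has to stand on its own. Steps 1 and 2 do stand. The formula $\partial_i\Pol_n(h)=\frac1n\Pol_{n-1}(h')(\hat x_i)$ is correct, and nonzero derivatives of a Fang-Ma-G{\aa}rding polynomial again satisfy (i)--(ii). Your connectedness argument in fact shows $\Upsilon_p=\Upsilon_{\partial_ip}\cap\{p>0\}$, which iterates to $\Upsilon_{\partial_IF}=\pi_I(\Upsilon_F)\times\R^I$ whenever $\partial_IF\not\equiv0$. This settles every $I$ except those with $d_g<|I|<d_f$. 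The case $|I|=d_f$ is harmless, since $\partial_IF$ is then a positive constant.

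The gap is in Step 3, which as written does not close this remaining case.

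\emph{The witness is wrong.} Take $w=(1,\ldots,1,-N,\ldots,-N)$ with $d_g$ ones and $n>d_g$. Then $\sigma_1(w)=d_g-(n-d_g)N<0$ for large $N$, so $w\notin\Gamma_{d_g}$. The vector $(N,\ldots,N,-1,\ldots,-1)$ with $d_g$ entries equal to $N$ would work, since it lies in $\Gamma_{d_g}$ and has $\sigma_{d_g+1}<0$.

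\emph{The rescaling limit is only asserted.} The inclusion $\Gamma_{d_g}\subseteq\liminf\lambda^{-1}\Upsilon_G$ requires showing that $\lambda v$ lies in the particular component $\Upsilon_G$, not merely in $\{G>0\}$.

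\emph{The fix needs only Step 2.} Suppose $d_g<d_f$ and pick $I'$ with $|I'|=d_g$. Then $\partial_{I'}G$ is a positive constant and $\partial_{I'}F\not\equiv0$, so your monotonicity argument gives $\R^n=\Upsilon_{\partial_{I'}G}\subseteq\Upsilon_{\partial_{I'}F}$. But $\partial_{I'}F$ does not depend on $x_{I'}$, and on the diagonal of the remaining variables it is a positive multiple of $f^{(d_g)}$. This vanishes at $r(f^{(d_g)})$, which exists because $d_g<d_f$ and $f$ is right-Noetherian. So $\Upsilon_{\partial_{I'}F}\subseteq\{\partial_{I'}F>0\}$ is a proper subset of $\R^n$, a contradiction. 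Hence $\Upsilon_G\subseteq\Upsilon_F$ forces $d_g\ge d_f$, the problematic case is empty, and your proof is complete without recession cones.
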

       
        We now define the cone condition for equation~(\ref{FirstEquation}).
        \begin{definition}
            Let $f(x) = x^d - \sum_{k=0}^{d-1}c_k x^k$ be a right-Noetherian polynomial of degree $d\le n$, and let $F(\boldsymbol{x})$ be its polarization on $\mathbb{R}^n$. We define the $\Upsilon^k$ cone by
            \[\Upsilon^k_F=\bigcap_{|\alpha|=k} \Upsilon_{\partial^\alpha F}.\]
            If \(x \in \Upsilon^1_F\), we say that $x$ satisfies the cone condition. If $\chi$ is a given K\"ahler form, then we say that $\omega$ satisfies the cone condition if the eigenvalues $\lambda\!\left[\chi^{-1}\omega\right]$ satisfy the cone condition. When $d=n$, we also say that $\omega$ is a C-subsolution if $\omega$ satisfies the cone condition.
            \label{DefnConeCondition}
        \end{definition}

        By Section~9.2 of \cite{FangMaGarding}, the $\Upsilon^k$ cone defined here is the same as that in Definition~2.6 of \cite{LIN2023110038}.

        We also need the following results:
        \begin{prop}[Theorem 3.1 of \cite{LIN2023110038}]
            Let $f(x) = x^n - \sum_{k=0}^{n-1}c_k x^k$ be a strictly right-Noetherian polynomial, and let $F(\boldsymbol{x})$ be its polarization on $\mathbb{R}^n$. Then $\Upsilon_F$ is convex.
            \label{SeveralResult}
        \end{prop}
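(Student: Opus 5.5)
The plan is to show that the boundary of $\Upsilon_F$ is a smooth hypersurface that is locally convex, and then pass from local to global convexity. Recall that $\Upsilon_F$ is the connected component of $\{F>0\}$ that satisfies PRT.

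\textbf{Step 1: the gradient on $\overline{\Upsilon_F}$.} Since $f\in\Ga_1$ by Lemma~\ref{FangMaLinCorrespondence}, I will use Theorem~\ref{kappa-polarization} to get $F\in\Ga_n$. Definition~\ref{FMGdefinition}(ii) then gives $\Upsilon_F\subseteq\Upsilon_{\partial_iF}$, so $\partial_iF>0$ on $\Upsilon_F$ for every $i$. Next I will pass to the closure and show $\nabla F\neq0$ on $\partial\Upsilon_F$. Along the diagonal this is exactly the strict inequality $r(f)>r(f')$: at $x=r(f)\mathbf 1$ one has $\partial_iF=f'(r(f))/n>0$. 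For a general boundary point, I would use Lemma~\ref{strictPRT} together with the identity $\partial_iF=\Pol_{n-1}(f'/n)$ in the other variables. The aim is to show that each $\partial_iF$ stays strictly positive on $\overline{\Upsilon_F}$, possibly with a uniform bound coming from $r(f)>r(f')$. Consequently $\partial\Upsilon_F\subseteq\{F=0\}$ is a smooth hypersurface, and it is a graph over the hyperplane orthogonal to $\mathbf 1$, by PRT.

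\textbf{Step 2: local convexity.} I then need the level set $\{F=0\}$ to be convex towards $\Upsilon_F$. Concretely, $D^2F(x)$ should be negative semidefinite on $\nabla F(x)^\perp$ at every boundary point. Because $F$ is multi-affine, $\partial_i^2F=0$, so the relevant quadratic form is
\[
\sum_{i\neq j}\partial_i\partial_jF\,v_iv_j\quad\text{with}\quad\sum_i\partial_iF\,v_i=0 .
\]
I plan to reduce this to pairwise \emph{Rayleigh-type} inequalities
\[
F\,\partial_i\partial_jF\le\partial_iF\,\partial_jF\quad\text{on }\overline{\Upsilon_F},
\]
which at $F=0$ say that $\partial_i\partial_jF\le0$ wherever it matters, or more precisely give the needed sign after substituting the tangency constraint. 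To prove these, I would freeze all variables except $x_i,x_j$; the result is a bi-affine polynomial, and symmetrizing it gives a univariate polynomial built from derivatives of $f$. The Rayleigh inequality should then follow from the log-concavity bound of Theorem~\ref{log-concavity},
\[
\frac{pp''}{(p')^2}\le\frac{\deg p-1}{\deg p},
\]
applied to these restrictions. For the cross terms I would argue by induction on $n$, using the $\partial_I$-stability of the cones from Definition~\ref{FMGdefinition}(ii) and Lemma~\ref{derivative-dominance}.

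\textbf{Step 3: local to global.} A connected open set whose boundary is smooth and locally convex, and which satisfies PRT so that it is a super-graph over a hyperplane, is convex. This is the Tietze--Nakajima theorem, or directly: the graph function is locally convex on a convex domain, hence convex.

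The main obstacle is Step~2, namely extracting a genuinely multivariable Rayleigh inequality from the one-variable log-concavity of Theorem~\ref{log-concavity}. If the direct reduction fails, my fallback is to show instead that $t\mapsto F(x+t(y-x))$ stays positive on $[0,1]$ for $x,y\in\Upsilon_F$. I would do this using the real-rootedness and interlacing of restrictions of polarizations, which is where the strict inequality $r(f)>r(f')$ would again be used.
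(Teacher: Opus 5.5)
Your proposal has a genuine gap, and it sits in Step~2, where all the content lies. The paper gives no argument of its own here; it imports the result as Theorem~3.1 of \cite{LIN2023110038}, so the multivariable curvature estimate is the whole theorem. Steps~1 and~3 are sound in outline. Step~1 can be done cleanly: since $r(f)>r(f')$, for small $\epsilon>0$ the polynomial $f+\epsilon$ is right-Noetherian with $f\Udom f+\epsilon$. Hence $\overline{\Upsilon_F}\subseteq\Upsilon_{F+\epsilon}\subseteq\Upsilon_{\partial_iF}$ by Theorem~\ref{UpsilonDominance} and Definition~\ref{FMGdefinition}(ii). There is no uniform lower bound on $\partial_iF$, as $F=x_1x_2-c_0$ shows.

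\textbf{Why Step~2 fails.} The Rayleigh inequalities $F\,\partial_i\partial_jF\le\partial_iF\,\partial_jF$ cannot give the sign you need.
\begin{itemize}
\item At a point of $\partial\Upsilon_F$ one has $F=0$, so they collapse to $0\le\partial_iF\,\partial_jF$. They say nothing about $\partial_i\partial_jF$, and substituting the tangency constraint afterwards yields nothing either.
\item $\partial_i\partial_jF\le0$ is not the mechanism anyway. Take $f=x^3-c_0$ with $c_0>0$, which is strictly right-Noetherian. Then $F=x_1x_2x_3-c_0$ has $\partial_1\partial_2F=x_3>0$ on $\partial\Upsilon_F$. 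Negativity comes only from the constraint: for $v\perp\nabla F$ one gets $\sum_{i\ne j}\partial_i\partial_jF\,v_iv_j=-c_0\sum_i v_i^2/x_i^2$.
\item Even in the interior, entrywise upper bounds on $\partial_i\partial_jF$ do not by themselves control $\sum_{i\ne j}\partial_i\partial_jF\,v_iv_j$ on $\nabla F^\perp$. Since all $\partial_iF>0$, the products $v_iv_j$ there necessarily take both signs.
\end{itemize}
What would be needed is a genuine matrix inequality or a direct estimate of the second fundamental form of $\{F=0\}$. One example is $F\,D^2F\le\nabla F\,\nabla F^{T}$ on $\Upsilon_F$, which would make the level sets $\{F=c\}$, $c>0$, locally convex and let you exhaust $\Upsilon_F$.

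\textbf{The reduction to log-concavity does not go through.} Theorem~\ref{log-concavity} is a statement on the diagonal only. If you freeze the other coordinates at unequal values, you get $a x_ix_j+bx_i+cx_j+d$. Its coefficients are multi-affine expressions in the frozen coordinates, not values of derivatives of $f$. Identifying $x_i=x_j$ then controls only $ad\le\bigl(\tfrac{b+c}{2}\bigr)^2$, which is weaker than the Rayleigh condition $ad\le bc$. Transferring the one-variable root information to an off-diagonal, multivariable bound is exactly the content of Lin's theorem, and your proposal does not supply it.

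\textbf{The fallback is unavailable.} Right-Noetherian polynomials need not be real-rooted; $x^3-c_0$ has a single real root. So $F$ is not hyperbolic, its restrictions to lines need not be real-rooted, and no interlacing argument applies.
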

        Using Definition 2.6 of \cite{LIN2023110038}, it is easy to see that $\Upsilon^k_F$ is also convex for all $k$.

        The following extension lemma is essential in the proof of the main theorem:
        \begin{lemma}
        \label{extensionlemma}
        Let $f(x) = x^n - \sum_{k=0}^{n-1}c_k \binom{n}{k} x^k$ be a right-Noetherian polynomial of degree $n$, and let $F(\boldsymbol{x})=x_1\cdots x_n-\sum_{k=0}^{n-1}c_k\sigma_k(x_1,\ldots,x_n)$ be its polarization. For $m<n$, define
        \begin{align*}
        G(x_1,\ldots,x_m)
        &=\frac{\partial^{n-m}F(x_1,\ldots,x_n)}
        {\partial x_{m+1}\cdots\partial x_n}\\
        &=x_1\cdots x_m-
        \sum_{k=n-m}^{n-1}c_k\sigma_{k-n+m}(x_1,\ldots,x_m).
        \end{align*}
        Then for any $(x_1,\ldots,x_m)\in \Upsilon_G$, there exists a constant $C$ depending only on an upper bound for $|x_1|,\ldots,|x_m|$ and a positive lower bound for $\partial_I G(x_1,\ldots,x_m)$ for all $I\subseteq\{1,2,\ldots,m\}$ such that $(x_1,\ldots,x_m,C,\ldots,C)\in\Upsilon_F$. We allow $I=\emptyset$ here.
        \end{lemma}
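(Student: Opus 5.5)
The plan is to show directly that the point $y=(x_1,\ldots,x_m,C,\ldots,C)$ satisfies $\partial_J F(y)>0$ for \emph{every} $J\subseteq\{1,\ldots,n\}$ once $C$ is large, and then to argue that this positivity of all partial derivatives already forces $y\in\Upsilon_F$.

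\textbf{Step 1: a sufficient criterion for membership in $\Upsilon_F$.} Let $S=\{y\in\R^n:\partial_J F(y)>0\text{ for all }J\subseteq\{1,\ldots,n\}\}$. We first show that $y\in S$ implies $y+\overline{\Gamma_n^+}\subseteq S$. Since $F$ is multi-affine, for $i\notin J$ we have $\partial_i\partial_J F=\partial_{J\cup\{i\}}F$, which is positive on $S$. For $i\in J$ we have $\partial_i\partial_J F=0$. Moving from $y$ in a coordinate direction with positive increment therefore never decreases any $\partial_J F$. Increasing the coordinates one at a time then shows that $S$ is invariant under adding vectors in $\overline{\Gamma_n^+}$.

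In particular, the ray $y+t(1,\ldots,1)$, $t\ge0$, stays in $S\subseteq\{F>0\}$. On the other hand, $\Upsilon_F$ is nonempty and satisfies PRT. Pick $z\in\Upsilon_F$; for $t$ large we have $y+t(1,\ldots,1)\in z+\overline{\Gamma_n^+}\subseteq\Upsilon_F$. Thus the connected ray joins $y$ to a point of $\Upsilon_F$ inside $\{F>0\}$. Since $\Upsilon_F$ is a connected component of $\{F>0\}$, this gives $y\in\Upsilon_F$, and hence $S\subseteq\Upsilon_F$.

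\textbf{Step 2: expansion in $C$.} Write $J=I\cup K$ with $I\subseteq\{1,\ldots,m\}$ and $K\subseteq\{m+1,\ldots,n\}$, and put $L=\{m+1,\ldots,n\}\setminus K$. Because $F$ is multi-affine, $\partial_J F(x_1,\ldots,x_m,C,\ldots,C)$ is a polynomial in $C$ of degree at most $|L|$. Its coefficient of $C^{|L|}$ is
\[
\partial_I\partial_K\partial_L F=\partial_I G(x_1,\ldots,x_m),
\]
which by hypothesis is at least some $\delta>0$. Each lower coefficient is a derivative of $F$ evaluated at $(x_1,\ldots,x_m,0,\ldots,0)$. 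Explicitly, these are sums of the $c_k$ times elementary symmetric functions of the $x_i$. Hence each lower coefficient is bounded in absolute value by some $A$ depending only on $n$, the $c_k$, and the upper bound for $|x_i|$.

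Consequently $\partial_J F(y)\ge \delta C^{|L|}-A\sum_{j<|L|}\binom{|L|}{j}C^{j}$. This is positive once $C\ge C_0(\delta,A,n)$ with $C_0\ge1$; when $|L|=0$ it is simply $\partial_I G(x)\ge\delta$. Choosing such a $C$, uniform over the finitely many $J$, gives $y\in S$, and Step~1 yields $y\in\Upsilon_F$.

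\textbf{Main obstacle.} The only delicate point is Step~1, namely that positivity of all $\partial_J F$ suffices for membership in the distinguished component rather than merely in $\{F>0\}$. The PRT/ray-connection argument above is the one I would use. If the hypothesis "positive lower bound for $\partial_I G$" has to be extracted rather than assumed, I would invoke Definition~\ref{FMGdefinition}(ii) together with Lemma~\ref{FangMaLinCorrespondence} and Theorem~\ref{kappa-polarization}. These show that $G$, a derivative of the Fang-Ma-G{\aa}rding polynomial $F$, satisfies $\Upsilon_G\subseteq\Upsilon_{\partial_I G}\subseteq\{\partial_I G>0\}$, so positivity of $\partial_I G$ holds on $\Upsilon_G$; the lemma then only quantifies it.
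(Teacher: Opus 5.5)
Your proof is correct and follows the paper's argument. You reduce to positivity of every $\partial_J F$ at $(x_1,\ldots,x_m,C,\ldots,C)$, note that, as a polynomial in $C$, this has leading coefficient $\partial_{J\cap\{1,\ldots,m\}}G(x_1,\ldots,x_m)$, and take $C$ large. The only difference is that the paper cites Lemma~2.5 of \cite{LIN2023110038} for the fact that positivity of all $\partial_J F$ places the point in $\Upsilon_F$, whereas your Step~1 proves this directly from multi-affinity, PRT, and connectedness of the ray.
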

        \begin{proof}
        By Lemma~2.5 of \cite{LIN2023110038}, it suffices to show that, for every $I\subseteq\{1,2,\ldots,n\}$,
        \[
        \partial_I F(x_1,\ldots,x_m,C,\ldots,C)>0.
        \]
        Viewed as a polynomial in $C$, this expression has leading coefficient
        \[
        \partial_{I\cap\{1,2,\ldots,m\}}G(x_1,\ldots,x_m).
        \]
        We can therefore choose $C$ sufficiently large so that the leading term controls the polynomial.
        \end{proof}

        We now study matrices through their eigenvalues. If $A$ is a positive-definite Hermitian matrix and $B$ is Hermitian, then the eigenvalues of $A^{-1}B$ are the same as those of the Hermitian matrix $A^{-\frac{1}{2}}BA^{-\frac{1}{2}}$.

        We first recall majorization. For a vector $x\in\mathbb{R}^n$, write $x^\downarrow=(x_1^\downarrow,\ldots,x_n^\downarrow)$ for its coordinates arranged in nonincreasing order. For an $A\in\Herm(n)$, write
$\lambda^\downarrow(A)=(\lambda_1^\downarrow(A),\ldots,
\lambda_n^\downarrow(A))$ for its eigenvalues in nonincreasing order.

\begin{definition}[Majorization]
Let $x,y\in\mathbb{R}^n$.  We say that $x$ is \emph{majorized} by $y$,
and write $x\prec y$, if
\[
  \sum_{i=1}^k x_i^\downarrow
  \leq
  \sum_{i=1}^k y_i^\downarrow
  \qquad (k=1,\ldots,n-1),
\]
and
\[
  \sum_{i=1}^n x_i
  =
  \sum_{i=1}^n y_i.
\]
\end{definition}

Now we state the Ky Fan--Lidskii inequality:

\begin{theorem}[Theorem~III.4.1, p.~69 of \cite{MR1477662}]
\label{Fan-Lidskii}
Let $A,B$ be Hermitian $n\times n$ matrices. Then, for $k=1,\ldots,n$,
\[
  \sum_{i=1}^k \lambda_i^\downarrow(A+B)
  \leq
  \sum_{i=1}^k \lambda_i^\downarrow(A)
  +
  \sum_{i=1}^k \lambda_i^\downarrow(B).
\]
Equality holds when $k=n$.  Equivalently,
\[
  \lambda^\downarrow(A+B)
  \prec
  \lambda^\downarrow(A)+\lambda^\downarrow(B).
\]
Consequently, for every $t\in[0,1]$,
\[
  \lambda^\downarrow\bigl(tA+(1-t)B\bigr)
  \prec
  t\lambda^\downarrow(A)+(1-t)\lambda^\downarrow(B).
\]
\end{theorem}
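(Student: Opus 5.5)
The plan is to derive everything from the Ky Fan maximum principle. For a Hermitian $n\times n$ matrix $H$ and $1\le k\le n$,
\[
  \sum_{i=1}^k \lambda_i^\downarrow(H)
  =
  \max\Bigl\{\sum_{i=1}^k \langle Hv_i,v_i\rangle : v_1,\ldots,v_k \text{ orthonormal in } \bC^n\Bigr\}.
\]
I would prove this identity first, in two steps.

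\emph{Lower bound.} Take $v_i$ to be orthonormal eigenvectors for $\lambda_1^\downarrow(H),\ldots,\lambda_k^\downarrow(H)$. Then the sum $\sum_i\langle Hv_i,v_i\rangle$ equals $\sum_{i\le k}\lambda_i^\downarrow(H)$, so the maximum is at least this value.

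\emph{Upper bound.} Diagonalize $H=\sum_j \lambda_j^\downarrow(H)\,u_ju_j^*$. For any orthonormal $v_1,\ldots,v_k$,
\[
  \sum_{i=1}^k\langle Hv_i,v_i\rangle=\sum_{j=1}^n \lambda_j^\downarrow(H)\,w_j,
  \qquad
  w_j=\sum_{i=1}^k|\langle v_i,u_j\rangle|^2 .
\]
The weights satisfy $0\le w_j\le 1$ by Bessel's inequality applied to $u_j$ against the orthonormal set $\{v_i\}$. They also satisfy $\sum_j w_j=k$, since each $v_i$ has unit norm. A linear functional with nonincreasing coefficients, over this polytope of weights, is maximized by putting weight $1$ on the first $k$ indices. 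This gives the upper bound $\sum_{i\le k}\lambda_i^\downarrow(H)$.

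\emph{The inequality.} Apply the principle to $H=A+B$. Choose an orthonormal $k$-frame attaining the maximum for $A+B$. Then
\[
  \sum_{i\le k}\lambda_i^\downarrow(A+B)=\sum_i\langle Av_i,v_i\rangle+\sum_i\langle Bv_i,v_i\rangle .
\]
By the maximum principle for $A$ and for $B$ separately, each term on the right is bounded by the corresponding top-$k$ eigenvalue sum. This is the claimed inequality.

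\emph{Equality at $k=n$.} Both sides equal $\operatorname{tr}(A+B)=\operatorname{tr}A+\operatorname{tr}B$.

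\emph{Majorization restatement.} The vector $\lambda^\downarrow(A)+\lambda^\downarrow(B)$ is already in nonincreasing order, so its partial sums of rearranged coordinates are exactly $\sum_{i\le k}\lambda_i^\downarrow(A)+\sum_{i\le k}\lambda_i^\downarrow(B)$. Hence the inequalities for $k\le n-1$, together with the trace equality, are precisely the definition of $\lambda^\downarrow(A+B)\prec\lambda^\downarrow(A)+\lambda^\downarrow(B)$.

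\emph{Convex-combination statement.} For $t\in[0,1]$, apply the result to the Hermitian matrices $tA$ and $(1-t)B$. Multiplication by a nonnegative scalar preserves the ordering of eigenvalues, so $\lambda^\downarrow(tA)=t\lambda^\downarrow(A)$ and $\lambda^\downarrow((1-t)B)=(1-t)\lambda^\downarrow(B)$.

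The only step with real content is the upper bound in the Ky Fan principle. It reduces to the elementary observation that the weights $w_j$ lie in $[0,1]$ with total $k$, after which the extremal choice is immediate. The rest is bookkeeping.
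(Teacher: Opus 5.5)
Your proof is correct and complete. The Ky Fan maximum principle is established properly: the weights $w_j$ lie in $[0,1]$ by Bessel and sum to $k$ by Parseval. The extremal step you call immediate is the one-line estimate $\sum_j\lambda_jw_j-\sum_{j\le k}\lambda_j\le\lambda_k\bigl(\sum_j w_j-k\bigr)=0$. The rest follows from the trace identity, the observation that $\lambda^\downarrow(A)+\lambda^\downarrow(B)$ is already nonincreasing, and $\lambda^\downarrow(cA)=c\,\lambda^\downarrow(A)$ for $c\ge0$.

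The paper gives no proof of its own. It cites Bhatia's Theorem~III.4.1, which is Lidskii's theorem for an arbitrary index set $i_1<\cdots<i_k$; the displayed inequality is its special case $i_j=j$.

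The two routes trade generality for simplicity:
\begin{itemize}
\item The citation gives a strictly stronger statement, for instance $\lambda^\downarrow(A+B)-\lambda^\downarrow(A)\prec\lambda^\downarrow(B)$. However, non-consecutive index sets need more than the Ky Fan principle; Bhatia deduces Lidskii's theorem from Wielandt's minimax principle.
\item Your route proves exactly the consecutive-index case, using nothing beyond Bessel/Parseval and a trivial linear optimization. That is all the paper actually uses, namely the majorization for $tA+(1-t)B$ in Corollary~\ref{convexmatrix}.
\end{itemize}
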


Now we state Rado's corollary of the Hardy--Littlewood--P\'{o}lya theorem.

\begin{theorem}[Corollary~2.B.3 of \cite{MR2759813}]
\label{Rado}
For $x,y\in\mathbb{R}^n$,  $x\prec y$ if and only if $x$ belongs to the convex hull of the coordinate permutations of $y$,
        that is,
        \[
          x\in\conv\{Py:P\text{ is a permutation matrix}\}.
        \]
\end{theorem}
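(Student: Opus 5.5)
This is the classical majorization theorem, cited from Marshall--Olkin, so the plan is to reconstruct the standard proof in its two directions.

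\emph{The ``if'' direction.} I first check that the map $y\mapsto \sum_{i=1}^k y_i^\downarrow$ is convex, because it equals $\max_{|S|=k}\sum_{i\in S}y_i$, a maximum of linear functionals. It is also permutation invariant. Suppose $x=\sum_P t_P\,Py$ with $t_P\ge 0$ and $\sum_P t_P=1$. Then convexity gives
\[
\sum_{i\le k}x_i^\downarrow\le \sum_P t_P\sum_{i\le k}(Py)_i^\downarrow=\sum_{i\le k}y_i^\downarrow .
\]
For $k=n$ the functional is linear, so the total sums of $x$ and $y$ agree.

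\emph{The ``only if'' direction.} I plan to argue by induction on the number of coordinates in which $x^\downarrow$ and $y^\downarrow$ differ. Since permuting $x$ does not change membership in the convex hull, I assume $x=x^\downarrow$ and $y=y^\downarrow$ with $x\prec y$ and $x\neq y$. Let $j$ be the largest index with $y_j>x_j$, and let $k>j$ be the smallest index with $y_k<x_k$. Such a $k$ exists by the equal-sum condition combined with the partial-sum inequalities. I then set
\[
\delta=\min(y_j-x_j,\;x_k-y_k)
\]
and apply a T-transform
\[
y' = tY+(1-t)\tau y, \qquad y'_j=y_j-\delta,\quad y'_k=y_k+\delta,
\]
where $\tau$ is the transposition of coordinates $j$ and $k$ and $t$ is chosen accordingly. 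Since $y_j\ge y_k$, $y'$ is a convex combination of $y$ and $\tau y$.

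The key steps, in order, are then the following.
\begin{enumerate}
\item Verify that $y'$ is still sorted decreasingly. For indices $j<i<k$ we have $y_i=x_i$, and this together with the sortedness of $x$ gives $y'_j\ge y_{j+1}$ and $y'_k\le y_{k-1}$.
\item Verify that $x\prec y'$. The only partial sums that change are those with $j\le m<k$, and each decreases by $\delta$. Since $x_i=y_i$ for $j<i<k$, we get $\sum_{i\le m}y_i-\sum_{i\le m}x_i\ge y_j-x_j\ge\delta$, which is enough.
\item Observe that $y'$ agrees with $x$ in at least one more coordinate than $y$ did.
\end{enumerate}
Induction now places $x$ in the convex hull of the permutations of $y'$. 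Each permutation of $y'$ lies in the convex hull of the permutations of $y$, so by transitivity of convex hulls $x$ lies there too.

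\emph{Expected obstacle.} The main difficulty is the index bookkeeping in steps~1 and~2, namely choosing $j$ and $k$ so that sortedness and the partial-sum inequalities survive the transfer. If that bookkeeping becomes messy, the fallback is a separation argument. Suppose $x$ lies outside the compact convex hull $K$ of the permutations of $y$. Then some vector $c$ satisfies $\langle c,x\rangle>\max_P\langle c,Py\rangle=\langle c^\downarrow,y^\downarrow\rangle$, the equality being the rearrangement inequality. Abel summation expresses $\langle c^\downarrow,y^\downarrow\rangle-\langle c^\downarrow,x^\downarrow\rangle$ as a nonnegative combination of the partial-sum differences, plus $c_n^\downarrow$ times the zero total difference. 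This combination is nonnegative, and together with $\langle c,x\rangle\le\langle c^\downarrow,x^\downarrow\rangle$ (rearrangement again) it contradicts the strict inequality from separation.
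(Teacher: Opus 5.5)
The paper gives no proof of its own here. It imports the statement from \cite{MR2759813}, where it is deduced from two ingredients: the Hardy--Littlewood--P\'olya theorem ($x\prec y$ if and only if $x=yP$ for some doubly stochastic $P$, proved via a finite chain of T-transforms), and Birkhoff's theorem that doubly stochastic matrices are convex combinations of permutation matrices.

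Your proof is correct, and it takes a more direct route. Its ``only if'' half uses the same T-transform mechanism but never passes through doubly stochastic matrices. Each T-transform output is a convex combination of two permutations of its input, so transitivity of convex hulls finishes the argument and Birkhoff's theorem is not needed. Your ``if'' half, via convexity and permutation invariance of $y\mapsto\sum_{i\le k}y_i^\downarrow$, is self-contained. Your fallback (separation, the rearrangement inequality, and Abel summation) is a complete independent proof that avoids the index bookkeeping altogether. The route through \cite{MR2759813} yields stronger intermediate facts, namely finite T-transform chains and a doubly stochastic representation. The paper, however, only uses the direction $x\prec y\Rightarrow x\in\conv\{Py\}$, in Corollary~\ref{convexmatrix}, and either of your arguments gives it directly.

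A few small repairs are needed:
\begin{enumerate}
\item $tY$ should read $ty$.
\item That $t\in[0,1]$ requires $0<\delta\le y_j-y_k$. This comes from $x_j\ge x_k>y_k$, not merely from $y_j\ge y_k$.
\item In step~1, when $k=j+1$ the inequality you need is $y'_j\ge y'_k$. It follows from $y'_j\ge x_j\ge x_k\ge y'_k$.
\item Step~2 silently uses the majorization inequality at index $j-1$ to discard $\sum_{i<j}(y_i-x_i)\ge 0$; say so explicitly.
\end{enumerate}
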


As a corollary, we can improve Lin's convexity result to include matrices.

\begin{cor}
\label{convexmatrix}
Let $\Upsilon\subseteq\mathbb{R}^n$ be convex and invariant under
coordinate permutations.  Then
\[
  \mathcal{S}_{\Upsilon}
  :=\{A\in\Herm(n):\lambda^\downarrow(A)\in\Upsilon\}
\]
is convex.
\end{cor}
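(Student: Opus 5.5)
The plan is to combine the Ky Fan--Lidskii inequality (Theorem~\ref{Fan-Lidskii}) with Rado's theorem (Theorem~\ref{Rado}), exactly as the preceding discussion suggests.

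Let $A,B\in\mathcal{S}_\Upsilon$ and $t\in[0,1]$. We must show that $\lambda^\downarrow(tA+(1-t)B)\in\Upsilon$.

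\emph{Step 1 (majorization).} Theorem~\ref{Fan-Lidskii} gives
\[
  x:=\lambda^\downarrow\bigl(tA+(1-t)B\bigr)\prec y:=t\lambda^\downarrow(A)+(1-t)\lambda^\downarrow(B).
\]

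\emph{Step 2 ($y\in\Upsilon$).} By hypothesis $\lambda^\downarrow(A)$ and $\lambda^\downarrow(B)$ both lie in $\Upsilon$. Since $\Upsilon$ is convex, the convex combination $y$ also lies in $\Upsilon$.

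\emph{Step 3 (permutations of $y$).} Because $\Upsilon$ is invariant under coordinate permutations, $Py\in\Upsilon$ for every permutation matrix $P$.

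\emph{Step 4 (conclusion).} By Theorem~\ref{Rado}, $x$ lies in the convex hull of the finitely many vectors $Py$. All of these lie in $\Upsilon$ by Step~3, so convexity of $\Upsilon$ gives $x\in\Upsilon$. Hence $tA+(1-t)B\in\mathcal{S}_\Upsilon$.

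Two small points need care. The definition of $\mathcal{S}_\Upsilon$ uses the ordered vector $\lambda^\downarrow$, while $\Upsilon$ is permutation invariant, so membership does not depend on the ordering chosen. Also, the combination $y$ in Step~1 need not be sorted, but $y^\downarrow$ and $y$ are permutations of each other. So Rado's theorem applies to $y$ directly, since majorization only involves $y^\downarrow$.

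There is no real obstacle here. The only step needing a moment's thought is Step~3, where the permutation invariance of $\Upsilon$ is used.
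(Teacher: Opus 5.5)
Your proof is correct and is the same argument as the paper's: Ky Fan--Lidskii gives $\lambda^\downarrow(tA+(1-t)B)\prec t\lambda^\downarrow(A)+(1-t)\lambda^\downarrow(B)$, convexity places this combination in $\Upsilon$, and Rado's theorem together with permutation invariance and convexity of $\Upsilon$ finishes the proof. One small correction to your remark: $y$ is in fact already nonincreasing, since it is a combination with nonnegative weights of two nonincreasing vectors; the caveat is unnecessary but does not affect the argument.
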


\begin{proof}
Take $A,B\in\mathcal{S}_{\Upsilon}$ and $t\in[0,1]$.  Put
\[
  x=\lambda^\downarrow(A),\qquad
  y=\lambda^\downarrow(B),\qquad
  w=tx+(1-t)y.
\]
Convexity of $\Upsilon$ gives $w\in\Upsilon$.  By Theorem \ref{Fan-Lidskii},
\[
  \lambda^\downarrow\bigl(tA+(1-t)B\bigr)\prec w.
\]
Theorem \ref{Rado} expresses the vector on the left as a convex combination of
coordinate permutations of $w$.  Every such permutation belongs to
$\Upsilon$, by permutation invariance, and their convex combination belongs
to $\Upsilon$, by convexity.  Hence
$tA+(1-t)B\in\mathcal{S}_{\Upsilon}$.
\end{proof}

We also need the following improvement of the positive ray property to include Hermitian matrices.

\begin{prop}
\label{PRTmatrix}
Let $\Upsilon\subseteq\mathbb{R}^n$ be invariant under coordinate
permutations and satisfy PRT. If $A, B, C, D\in\Herm(n)$, $C$ is positive definite, and $A-C$, $B$, and $D-B$ are positive semi-definite, then $\lambda^\downarrow(A^{-1}B)\in\Upsilon$ implies that \[
  \lambda^\downarrow(C^{-1}D)\in\Upsilon.
\]
\end{prop}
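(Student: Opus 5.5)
The plan is to compare eigenvalues coordinatewise in nonincreasing order and then apply PRT once. Concretely, I will show
\[
  \lambda_i^\downarrow(A^{-1}B)\le\lambda_i^\downarrow(C^{-1}B)\le\lambda_i^\downarrow(C^{-1}D)\qquad(1\le i\le n).
\]
Then $\lambda^\downarrow(C^{-1}D)=\lambda^\downarrow(A^{-1}B)+v$ with $v\in\overline{\Gamma_n^+}$, and PRT gives $\lambda^\downarrow(C^{-1}D)\in\Upsilon$. Since both vectors are sorted in the same order, permutation invariance of $\Upsilon$ is only needed to make sense of "$\lambda^\downarrow\in\Upsilon$" independently of ordering conventions. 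The tool throughout is Weyl's monotonicity principle: if $X\le Y$ are Hermitian, then $\lambda_i^\downarrow(X)\le\lambda_i^\downarrow(Y)$ for every $i$. This follows from the Courant--Fischer min-max characterization.

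\textbf{First inequality.} Since $A\ge C>0$, $A$ is positive definite. Operator monotonicity of inversion gives $A^{-1}\le C^{-1}$; to see this, write $C^{-1/2}AC^{-1/2}\ge I$ and invert. Because $B\ge0$, it has a square root $B^{1/2}$. For any invertible $P$ (here $P=A$ or $P=C$), the matrix $P^{-1}B$ has the same eigenvalues as $B^{1/2}P^{-1}B^{1/2}$. This holds because $XY$ and $YX$ share characteristic polynomials; take $X=B^{1/2}$ and $Y=P^{-1}B^{1/2}$. Conjugation preserves the order, so
\[
  B^{1/2}A^{-1}B^{1/2}\le B^{1/2}C^{-1}B^{1/2}.
\]
Weyl monotonicity then yields $\lambda_i^\downarrow(A^{-1}B)\le\lambda_i^\downarrow(C^{-1}B)$.

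\textbf{Second inequality.} Write the eigenvalues of $C^{-1}B$ and $C^{-1}D$ as those of $C^{-1/2}BC^{-1/2}$ and $C^{-1/2}DC^{-1/2}$, as noted before Corollary~\ref{convexmatrix}. Since $D-B\ge0$, we get $C^{-1/2}BC^{-1/2}\le C^{-1/2}DC^{-1/2}$. Weyl monotonicity again gives $\lambda_i^\downarrow(C^{-1}B)\le\lambda_i^\downarrow(C^{-1}D)$.

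\textbf{Main obstacle.} The only delicate point is the first comparison, where the "denominator" changes. This requires both the operator-monotone inversion $A^{-1}\le C^{-1}$ and the symmetrization through $B^{1/2}$. That symmetrization is where $B\ge0$ is used: for indefinite $B$ the eigenvalue comparison fails. Combining the two inequalities gives $v=\lambda^\downarrow(C^{-1}D)-\lambda^\downarrow(A^{-1}B)\in\overline{\Gamma_n^+}$, and PRT completes the argument.
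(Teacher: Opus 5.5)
Your proof is correct. It reaches the same coordinatewise inequality $\lambda_i^\downarrow(A^{-1}B)\le\lambda_i^\downarrow(C^{-1}D)$ as the paper, but it handles the change of ``denominator'' by a different mechanism.

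The paper does this in one step with the generalized Courant--Fischer formula
\[
\lambda_i^\downarrow(A^{-1}B)=\max_{\dim V=i}\ \min_{0\neq x\in V}\frac{x^*Bx}{x^*Ax}.
\]
For each fixed $x\neq0$ it has $\frac{x^*Bx}{x^*Ax}\le\frac{x^*Bx}{x^*Cx}\le\frac{x^*Dx}{x^*Cx}$, because $x^*Bx\ge0$ and $x^*Ax\ge x^*Cx>0$. The max--min then preserves this pointwise inequality.

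You instead pass through the intermediate matrix $C^{-1}B$. Your first comparison needs two extra facts: operator monotonicity of inversion ($A^{-1}\le C^{-1}$) and the $XY$/$YX$ symmetrization through $B^{1/2}$. After that, only ordinary Weyl monotonicity for Hermitian matrices is used.

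The paper's route is shorter, and it uses $B\ge0$ in the most transparent way: as a scalar inequality between Rayleigh quotients. Your route avoids the generalized Rayleigh-quotient formula, which the paper quotes without derivation (it follows from the standard one via $x=A^{-1/2}y$). Your route also carries out explicitly the final PRT step that the paper leaves implicit.
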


\begin{proof}
By the Courant--Fischer-Weyl min--max principle \cite[Chapter~III, pp.~57--83]{MR1477662},
\begin{align*}
  \lambda_i^\downarrow(A^{-1}B)
  &=\max_{\substack{V\subseteq\mathbb{C}^n\\ \dim V=i}}
    \min_{0\neq x\in V}\frac{x^*Bx}{x^*Ax}\\
  &\leq
    \max_{\substack{V\subseteq\mathbb{C}^n\\ \dim V=i}}
    \min_{0\neq x\in V}\frac{x^*Dx}{x^*Cx}
   =\lambda_i^\downarrow(C^{-1}D).
\end{align*}
\end{proof}

Now we start to study differential forms.

Let $g(t)= \sum_{i=0}^k a_i t^i$ with $k \le n$ and $a_k=1$, and set $G=\Pol_n(g)$. With respect to the fixed background form $\chi$, define
\begin{equation}
 H_G (\omega) \triangleq   \sum_{i=0}^k a_i \omega^i \wedge \chi^{n-i}.
\end{equation}
We denote by $H_G^{(l)}(\omega)$ the $l$th formal derivative of $H_G(\omega)$ with respect to $\omega$. For example, $(\omega^k \wedge \chi^{n-k})^{(l)}=\frac{k!}{(k-l)!} \omega^{k-l}\wedge \chi^{n-k}$ for $l \le k$.
We need the following lemma:
\begin{lemma}
\label{positivity}
If $\omega$ satisfies the cone condition for a right-Noetherian polynomial $g(x) = x^d - \sum_{k=0}^{d-1}a_k x^k$ with $d\le n$, then for any $n-d\le p<n$, the $(p,p)$-form
\[
H^{(n-p)}_G(\omega)=\frac{d!}{(d-n+p)!}\omega^{d-n+p}\wedge\chi^{n-d}-\sum_{k=n-p}^{d-1}\frac{k!}{(k-n+p)!}a_k\omega^{k-n+p}\wedge\chi^{n-k}
\]
is a strictly strongly positive $(p,p)$-form.
\end{lemma}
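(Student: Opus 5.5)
Fix a point of $M$ and diagonalize: choose coordinates in which $\chi=\sqrt{-1}\sum_i dz_i\wedge d\bar z_i$ and $\omega=\sqrt{-1}\sum_i\lambda_i dz_i\wedge d\bar z_i$, where $\lambda=\lambda[\chi^{-1}\omega]$. Everything is then a pointwise computation in these coordinates.

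\textbf{Step 1: expand in the basis.} For a multi-index set $J$ with $|J|=p$, write $\beta_J=\prod_{j\in J}\sqrt{-1}dz_j\wedge d\bar z_j$. A direct expansion gives
\[
\omega^{k-n+p}\wedge\chi^{n-k}=(k-n+p)!\,(n-k)!\sum_{|J|=p}\sigma_{k-n+p}(\lambda_J)\,\beta_J ,
\]
where $\lambda_J=(\lambda_j)_{j\in J}$. Substituting into $H^{(n-p)}_G(\omega)$, the factorials $\frac{k!}{(k-n+p)!}$ cancel against $(k-n+p)!$, leaving $k!(n-k)!=n!/\binom nk$. Hence
\[
H^{(n-p)}_G(\omega)=n!\sum_{|J|=p}\Big(\frac{\sigma_{d-n+p}(\lambda_J)}{\binom nd}-\sum_{k=n-p}^{d-1}a_k\frac{\sigma_{k-n+p}(\lambda_J)}{\binom nk}\Big)\beta_J .
\]

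\textbf{Step 2: recognize the coefficients.} The bracket is exactly $\partial_{J^c}G(\lambda)$, where $G=\Pol_n(g)$ and $\partial_{J^c}=\prod_{i\notin J}\partial_i$. Indeed, differentiating $\sigma_k(\lambda)$ in the $n-p$ variables indexed by $J^c$ gives $\sigma_{k-n+p}(\lambda_J)$, and terms with $k<n-p$ vanish. So the form is a sum of the elementary positive forms $\beta_J$ with coefficients $n!\,\partial_{J^c}G(\lambda)$. A form $\sum_J c_J\beta_J$ with all $c_J>0$ is strictly strongly positive, since it dominates a positive multiple of $\chi^p$, which lies in the interior of the strongly positive cone. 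It therefore suffices to show $\partial_{J^c}G(\lambda)>0$ for every $J$.

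\textbf{Step 3: positivity from the cone.} Here $\partial_{J^c}G=\partial^\alpha G$ with $|\alpha|=n-p\ge1$ and $\partial^\alpha G\not\equiv0$, because its top-degree part $\sigma_{d-n+p}(\lambda_J)$ is nonzero when $p\ge n-d$. This is the step where the argument has to be set up carefully, and I would reduce to the sets already controlled by the cone condition, as follows.

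\begin{itemize}
\item By Lemma~\ref{FangMaLinCorrespondence}, $g\in\Ga_1$. The polarization $G$ is Fang--Ma--G\aa rding by Theorem~\ref{kappa-polarization}, granting that NRT holds for the univariate component, which is $(r(g),\infty)$.
\item Definition~\ref{FMGdefinition}(ii), iterated along any chain of single derivatives, gives $\Upsilon_{\partial_i G}\subseteq\Upsilon_{\partial^\alpha G}$ for $i\in\alpha$.
\item The cone condition says $\lambda\in\Upsilon_{\partial_iG}$ for every $i$. Hence $\lambda\in\Upsilon_{\partial^\alpha G}$, which is a component of $\{\partial^\alpha G>0\}$, so $\partial^\alpha G(\lambda)>0$.
\end{itemize}

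If one prefers to avoid the detour through Theorem~\ref{kappa-polarization}, I would instead cite that $\Upsilon^k$ cones decrease in $k$ (as in Definition~2.6 of Lin / Section~9.2 of Fang--Ma). The main care is exactly here: confirming that the nested inclusions in Definition~\ref{FMGdefinition} apply to $G$ itself and not only to the univariate $g$.
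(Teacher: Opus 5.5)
Your proposal is correct and follows the same route as the paper. You expand pointwise to get $H^{(n-p)}_G(\omega)=n!\sum_{|I|=p}\partial_{I^c}G(\lambda)\,\beta_I$, then obtain positivity of $\partial_{I^c}G$ on $\Upsilon^1_G$ from the fact that $G=\Pol_n(g)$ is Fang-Ma-G{\aa}rding (Lemma~\ref{FangMaLinCorrespondence} and Theorem~\ref{kappa-polarization}). Your explicit NRT check for $(r(g),\infty)$ and your iteration of condition (ii) of Definition~\ref{FMGdefinition} along a chain of single derivatives just spell out steps the paper compresses into ``by the definition.''
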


Fix a point \(x\in M\). Choose a \(\chi\)-unitary
holomorphic coframe at \(x\) in which
\[
\chi
=
\sum_{j=1}^{n}\beta_j,
\qquad
\omega
=
\sum_{j=1}^{n}\lambda_j\beta_j,
\qquad
\beta_j
=
\sqrt{-1}\,dz^j\wedge d\bar z^j.
\]
By assumption,
\[
\lambda=(\lambda_1,\ldots,\lambda_n)
=
\lambda\!\left[\chi^{-1}\omega\right](x)
\in   \Upsilon^1_G.
\]

For a subset \(I=\{i_1,\ldots,i_p\}\subset\{1,\ldots,n\}\),
write
\[
\beta_I
=
\beta_{i_1}\wedge\cdots\wedge\beta_{i_p}
\]
and let
\[
\lambda_I=(\lambda_i)_{i\in I}.
\]

Set
\[
r=n-p.
\]
Since \(n-r=p\), we have
\begin{equation}\label{e hgr1}
H_G^{(r)}(\omega)
=\frac{d!}{(d-r)!}\omega^{d-r}\wedge\chi^{n-d}-\sum_{k=n-p}^{d-1}
a_k\frac{k!}{(k-r)!}
\omega^{k-r}\wedge\chi^{n-k}.
\end{equation}

For each \(k\geq r\), direct expansion gives
\begin{equation}\label{e omega}
\omega^{k-r}\wedge\chi^{n-k}
=
(k-r)!(n-k)!
\sum_{|I|=k}
\sigma_{k-r}(\lambda_I)\beta_I.
\end{equation}

Substituting (\ref{e omega}) into (\ref{e hgr1}), we obtain
\begin{equation}\label{e hgr}
H_G^{(r)}(\omega)
=
\sum_{|I|=p}
\left[
d!(n-d)!\sigma_{d-r}(\lambda_I)-
\sum_{k=r}^{d-1}
a_k k!(n-k)!
\sigma_{k-r}(\lambda_I)
\right]\beta_I.
\end{equation}

Let \(J=I^c\), so that \(|J|=r\). Since \(G\) is
multi-affine, one has
\[
\partial_J\sigma_k(\lambda)
=
\sigma_{k-r}(\lambda_{J^c})
=
\sigma_{k-r}(\lambda_I).
\]
Consequently,
\begin{equation}\label{e gic}
G_{I^c}(\lambda)
=\frac{1}{\binom n d}
\sigma_{d-r}(\lambda_I)-
\sum_{k=r}^{d-1}
\frac{a_k}{\binom n k}
\sigma_{k-r}(\lambda_I).
\end{equation}

Comparing (\ref{e hgr}) and (\ref{e gic}), we obtain the pointwise identity
\begin{equation*}
H_G^{(n-p)}(\omega)
=
n!\sum_{|I|=p}
G_{I^c}(\lambda)\,\beta_I.
\end{equation*}
By Lemma~\ref{FangMaLinCorrespondence}, $g$ is also a \emph{Fang-Ma-G{\aa}rding polynomial}. Theorem~\ref{kappa-polarization} then shows that its polarization $G$ is also a \emph{Fang-Ma-G{\aa}rding polynomial}. By the definition of a \emph{Fang-Ma-G{\aa}rding polynomial}, $G_{I^c}(\lambda)>0$ for $\lambda \in \Upsilon^1_G$. As a result, $H_G^{(n-p)}(\omega)$ is a smooth strictly strongly positive $(p,p)$-form on $M$.

\section{The cone-inclusion lemmas}

In this section, we prove two cone-inclusion lemmas that will be used in the proofs.

\begin{lemma}\label{cone-inclusion}
Let \(f\in\R[x]\) be monic, of degree \(d\ge1\), and right-Noetherian.  Let \(T\in\R\), set
\[
  g_T(x)=(x+T)f(x),
\]
and let \(n\ge d+1\). Then $g_T$ is also right-Noetherian. Moreover, define the ordinary polarizations
\[
  F(\boldsymbol{x})=\Pol_n(f)(\boldsymbol{x}),
  \qquad
  G_T(\boldsymbol{x})=\Pol_n(g_T)(\boldsymbol{x}),
  \qquad \boldsymbol{x}=(x_1,\ldots,x_n).
\]
Then the following inclusion of Fang-Ma-G{\aa}rding components holds:
\( \Upsilon_{G_T}\subseteq \Upsilon_F. \)
\end{lemma}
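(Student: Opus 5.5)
We prove the two assertions separately. The first, that $g_T$ is right-Noetherian, is formal. The linear polynomial $x+T$ is monic and trivially right-Noetherian, so $x+T\in\Ga_1$ by Lemma~\ref{FangMaLinCorrespondence}. By the same lemma, $f\in\Ga_1$. Proposition~\ref{product-Garding} gives $g_T=(x+T)f\in\Ga_1$, and since $g_T$ is monic, Lemma~\ref{FangMaLinCorrespondence} again shows that $g_T$ is right-Noetherian.

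For the inclusion $\Upsilon_{G_T}\subseteq\Upsilon_F$ we cannot apply Theorem~\ref{UpsilonDominance} directly, because the degrees differ ($d+1$ versus $d$). The plan is to push $T$ to $+\infty$ and obtain $F$ as a limit.

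\emph{Step 1 (monotonicity in $T$).} Show that $r(g_S^{(k)})\le r(g_T^{(k)})$ for $S\ge T$ and $0\le k\le d$, i.e. $g_T\Udom g_S$. We have
\[
  g_S^{(k)}=(x+S)f^{(k)}+kf^{(k-1)},
  \qquad
  \partial_S g_S^{(k)}=f^{(k)}.
\]
First we show $r(g_S^{(k)})\ge r(f^{(k)})$, using the right-Noetherian ordering of $f$. Then $f^{(k)}>0$ to the right of $r(g_S^{(k)})$, so increasing $S$ makes $g_S^{(k)}$ positive on $[r(g_T^{(k)}),\infty)$. Hence the largest root can only move left, and Theorem~\ref{UpsilonDominance} (same degree $d+1\le n$) gives $\Upsilon_{G_T}\subseteq\Upsilon_{G_S}$ for all $S\ge T$.

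\emph{Step 2 (limit).} By linearity of $\Pol_n$,
\[
  \frac{G_S}{S}=\frac{\Pol_n(xf)}{S}+F ,
\]
so $G_S/S\to F$ locally uniformly, together with all derivatives $\partial_I$. Fix $x\in\Upsilon_{G_T}$. By PRT and Step~1, $x+\overline{\Gamma_n^+}\subseteq\Upsilon_{G_S}$ for every $S\ge T$. Since $G_S\in\Ga$ (by Theorem~\ref{kappa-polarization}, or directly from the polarization results), every $\partial_I G_S$ is positive on $\Upsilon_{G_S}$. Letting $S\to\infty$ gives $\partial_I F\ge0$ on $x+\overline{\Gamma_n^+}$ for all $I$. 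Because $\Upsilon_{G_T}$ is open, we may instead start from some $x-\varepsilon(1,\ldots,1)\in\Upsilon_{G_T}$. Then $x$ lies in the interior of a translated orthant on which all $\partial_I F\ge0$.

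\emph{Step 3, the main obstacle: upgrading weak to strict positivity.} We must show that all $\partial_I F\ge0$ on an open translated orthant $y_0+\Gamma_n^+$ forces $y_0+\Gamma_n^+\subseteq\Upsilon_F$. The first thing to try is the characterization of $\Upsilon_F$ by $\partial_IF>0$ for all $I$ (Lemma~2.5 of \cite{LIN2023110038}, already used in Lemma~\ref{extensionlemma}), combined with a downward induction on $|I|$. For $|I|=n$, $\partial_IF\equiv1>0$. Suppose all $\partial_{I\cup\{i\}}F>0$ on the orthant. Then $\partial_IF$ is strictly increasing along each coordinate direction there. Being $\ge0$ at $y_0$ (by continuity), it is therefore $>0$ on the open orthant $y_0+\Gamma_n^+$. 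This induction gives $\partial_IF>0$ at every point of $y_0+\Gamma_n^+$, in particular at $x$. Hence $x\in\Upsilon_F$, and the argument is independent of the cited characterization except for its final invocation.
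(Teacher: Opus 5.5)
Your first part, that $g_T$ is right-Noetherian, matches the paper's. For the inclusion you take a genuinely different route, and it is correct in substance once two slips are fixed.

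\textbf{Comparison with the paper.} The paper works with the bivariate polynomial $q_T(x,y)=f(x)(x+T+y)$. It shows $q_T\in\Ga_2$ with an NRT component, since $q_T$ is a product of $f$ and a linear form. It then $\boldsymbol{\kappa}$-polarizes with $\boldsymbol{\kappa}=(n,1)$ to get $P_T(\boldsymbol{x},y)=G_T(\boldsymbol{x})+yF(\boldsymbol{x})\in\Ga_{n+1}$. The inclusion is read off in one step from the Fang-Ma-G{\aa}rding axiom $\Upsilon_{P_T}\subseteq\Upsilon_{\partial_yP_T}=\Upsilon_F\times\R$, restricted to the slice $y=0$.

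Your family is exactly this one, since $G_S=G_T+(S-T)F=P_T(\cdot,S-T)$. Your Step~1 (root monotonicity plus Theorem~\ref{UpsilonDominance}) plays the role of PRT of $\Upsilon_{P_T}$ in the $y$-direction. Your Steps~2--3 (divide by $S$, let $S\to\infty$, upgrade $\ge$ to $>$) replace the single derivative-inclusion axiom for $\partial_y$. The paper's argument is shorter and needs no limit or weak-to-strict upgrade. Yours avoids the bivariate NRT check and the mixed-multidegree polarization, using only univariate root sequences and same-degree dominance. As a by-product it also gives $\Upsilon_{G_T}\subseteq\Upsilon_{G_S}$ for all $S\ge T$.

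\textbf{Slips in Step~3.} First, since $n\ge d+1$, $\partial_IF\equiv0$ for $|I|>d$. So your base case ``$|I|=n$, $\partial_IF\equiv1$'' is false, and the induction as written would assert $\partial_IF>0$ for $|I|>d$. (Likewise in Step~2, $\partial_IG_S\equiv0$ for $|I|>d+1$; this is harmless there.) Start the induction instead at $|I|=d$, where $\partial_IF\equiv\binom nd^{-1}>0$, and run it over $|I|\le d$. Because $I^c\neq\emptyset$ at every stage, the derivative of $\partial_IF$ along any $v\in\Gamma_n^+$ is strictly positive. It is not strictly increasing along every coordinate direction, since $\partial_IF$ does not depend on $x_i$ for $i\in I$.

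Second, Lemma~2.5 of \cite{LIN2023110038} concerns polarizations of degree-$n$ polynomials in $n$ variables. That is the only way the paper uses it, whereas here $\deg f=d<n$. You do not need it. Once $F>0$ on the convex open orthant $O=y_0+\Gamma_n^+$, take any $z\in\Upsilon_F$. The componentwise maximum of $z$ and $y_0+(1,\ldots,1)$ lies in $\Upsilon_F$ by PRT and also lies in $O$. So the connected set $O\subseteq\{F>0\}$ meets the component $\Upsilon_F$, hence is contained in it, and $x\in\Upsilon_F$. With these repairs your proof is complete.
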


\begin{proof}
First, since both $f$ and $x+T$ are right-Noetherian, i.e., in $\Ga_1$, Proposition~\ref{product-Garding} shows that their product $g_T$ is also in $\Ga_1$, i.e., right-Noetherian.

We first show that 
\[
  q_T(x,y):=f(x)(x+T+y)\in\Ga_2
\]
with NRT Fang-Ma-G{\aa}rding component.

Regard \(f(x)\) as a polynomial on \(\R^2\) with coordinates
\((x,y)\).  Its nonzero derivatives are the polynomials
\(f^{(k)}(x)\), whose unique PRT components are
\[
  (r\bigl(f^{(k)}\bigr),\infty)\times\R
  \quad(0\le k\le d-1),
  \qquad\text{and}\qquad
  \R^2\quad(k=d).
\]

The right-Noetherian condition shows directly that \(f(x)\in\Ga_2\). It is also easy to see that the affine linear polynomial
\[
  \ell_T(x,y)=x+T+y
\]
belongs to \(\Ga_2\). Proposition
\ref{product-Garding} therefore gives \(q_T(x,y)\in\Ga_2\). The Fang-Ma-G{\aa}rding component is
\[
  \Upsilon_{q_T}=\{(x,y):x>r(f),\ x+T+y>0\}.
\]

This component satisfies NRT.  Indeed, fix
\((x,y)\in \Upsilon_{q_T}\).  If \(a,b\ge0\) and
\((x-a,y-b)\in \Upsilon_{q_T}\), then
\[
  0\le a<x-r(f)
\]
and
\[
  0\le b<x-a+T+y\le x+T+y.
\]
Thus all such \((a,b)\) lie in a bounded rectangle, as required.

We now \(\boldsymbol{\kappa}\)-polarize \(q_T\). The multidegree of \(q_T\) is \((d+1,1)\le(n,1)\). Applying Theorem~\ref{kappa-polarization} with
\(\boldsymbol{\kappa}=(n,1)\) yields the multi-affine Fang-Ma-G{\aa}rding
polynomial
\[
  P_T(\boldsymbol{x},y)
  :=
  \Pi^\uparrow_{(n,1)}q_T(\boldsymbol{x},y)\in\Ga_{n+1}.
\]
By linearity of polarization and the identity
\[
  q_T(x,y)=g_T(x)+y f(x),
\]
the definition of \(\boldsymbol{\kappa}\)-polarization gives the formula
\[
  P_T(\boldsymbol{x},y)
  =
  G_T(\boldsymbol{x})+yF(\boldsymbol{x}).
\]

Thus, by the definition of Fang-Ma-G{\aa}rding polynomials, we have
\[
\Upsilon_{P_T}\subset \Upsilon_{F}\times\R,
\]
since \(F(\boldsymbol{x})=\frac{\partial}{\partial y}P_T(\boldsymbol{x},y)\).

Finally, the set \( \{ \boldsymbol{x}: (\boldsymbol{x},0)\in \Upsilon_{P_T}\} \) satisfies PRT and is therefore connected. It is easy to see that this set equals \(\Upsilon_{G_T}\), and that \(\boldsymbol{x}\in \Upsilon_{F}\) if and only if \((\boldsymbol{x},0)\in \Upsilon_{F}\times\R\). This finishes the proof.
\end{proof}

Another cone inclusion lemma is the following:

\begin{lemma}
Let \(f\in\R[x]\) be a monic univariate right-Noetherian polynomial of degree \(n\). Then for every \(\varepsilon>0\),
\[g_\varepsilon(x):=f(x)-\varepsilon f'(x)
\]
is strongly strictly right-Noetherian, and \(g_\varepsilon\Udom f\).
\label{perturbation}
\end{lemma}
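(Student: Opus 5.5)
The plan is to compute the root sequence of $g_\varepsilon$ directly, using the log-derivatives of the derivatives of $f$ together with Theorem~\ref{log-concavity}.

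\textbf{Setup.} Write $r_k=r(f^{(k)})$ for $0\le k\le n-1$, so that $r_0\ge r_1\ge\cdots\ge r_{n-1}$. Note that $g_\varepsilon^{(k)}=f^{(k)}-\varepsilon f^{(k+1)}$. Each $f^{(k)}$ is, up to a positive constant, a monic right-Noetherian polynomial of degree $n-k$, since its own root sequence is a tail of $R(f)$. Hence Theorem~\ref{log-concavity} applies to each $f^{(k)}$ with $k\le n-2$. Because $r_k$ is the largest root of $f^{(k)}$ and $r_k\ge r_{k+1}$, we have $f^{(k)}>0$ and $f^{(k+1)}>0$ on $(r_k,\infty)$.

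\textbf{Step 1: locate the largest root of $g_\varepsilon^{(k)}$.} On $(r_k,\infty)$ set
\[
\psi_k=\frac{f^{(k+1)}}{f^{(k)}}.
\]
Then $g_\varepsilon^{(k)}=f^{(k)}(1-\varepsilon\psi_k)$ there. We have
\[
\psi_k'=\frac{f^{(k)}f^{(k+2)}-(f^{(k+1)})^2}{(f^{(k)})^2}<0,
\]
because Theorem~\ref{log-concavity} gives $f^{(k)}f^{(k+2)}\le\frac{n-k-1}{n-k}(f^{(k+1)})^2<(f^{(k+1)})^2$. For $k=n-1$, $\psi_{n-1}=\frac{n!}{n!\,(x-r_{n-1})}$ is decreasing trivially.

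Next, $\psi_k\to+\infty$ as $x\downarrow r_k$: if $r_k$ is a root of $f^{(k)}$ of multiplicity $m\ge1$, then $\psi_k\sim m/(x-r_k)$. Also $\psi_k\to0$ as $x\to\infty$. Hence $g_\varepsilon^{(k)}$ has exactly one zero $\rho_k$ in $(r_k,\infty)$, with $g_\varepsilon^{(k)}>0$ on $(\rho_k,\infty)$.

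On $(-\infty,r_k]$ there is no larger root to worry about, and moreover $g_\varepsilon^{(k)}(r_k)=-\varepsilon f^{(k+1)}(r_k)\le0$. So $r(g_\varepsilon^{(k)})=\rho_k>r_k$. This gives existence of all entries of $R(g_\varepsilon)$, and also $g_\varepsilon\Udom f$.

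\textbf{Step 2: strict monotonicity $\rho_k>\rho_{k+1}$.} This is the main step. If $\rho_{k+1}\le r_k$, then $\rho_{k+1}\le r_k<\rho_k$ and we are done. Otherwise $\rho_{k+1}>r_k\ge r_{k+1}$, so both $\psi_k$ and $\psi_{k+1}$ are defined at $\rho_{k+1}$, and $\psi_{k+1}(\rho_{k+1})=1/\varepsilon$. The strict inequality $f^{(k)}f^{(k+2)}<(f^{(k+1)})^2$ from Step 1 is exactly $\psi_{k+1}<\psi_k$ on $(r_k,\infty)$. Therefore $\psi_k(\rho_{k+1})>1/\varepsilon$, i.e. $g_\varepsilon^{(k)}(\rho_{k+1})<0$. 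Since $g_\varepsilon^{(k)}<0$ precisely on $(r_k,\rho_k)$ within $(r_k,\infty)$, it follows that $\rho_{k+1}<\rho_k$.

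\textbf{Expected obstacle.} The only delicate points are the boundary behaviour at $r_k$ when $r_k=r_{k+1}$ (multiple roots) and the strictness of the log-concavity inequality. Both are handled by the multiplicity asymptotics and the factor $\frac{n-k-1}{n-k}<1$ in Theorem~\ref{log-concavity}.
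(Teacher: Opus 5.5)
Your proof is correct and is essentially the paper's argument. The key step is the case $\rho_{k+1}>r_k$: there Lin's log-concavity bound (Theorem~\ref{log-concavity}) gives $\psi_k(\rho_{k+1})>\psi_{k+1}(\rho_{k+1})=1/\varepsilon$, hence $g_\varepsilon^{(k)}(\rho_{k+1})<0$. This is the paper's estimate $g_\varepsilon(x_0)\le-\frac{\varepsilon}{n}f'(x_0)<0$ at $x_0=r(g_\varepsilon')$, and your reduction via $\rho_{k+1}\le r_k<\rho_k$ mirrors the paper's as well.

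The only minor difference is in the existence and dominance step. You use the strict monotonicity of $\psi_k$, which also gives the full sign pattern of $g_\varepsilon^{(k)}$ on $(r_k,\infty)$. The paper instead factors $f$ locally at $r(f)$ and applies the intermediate value theorem.
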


\begin{proof}
We first show that the largest real root \(r(g_\varepsilon)\) exists and \(r(g_\varepsilon)>r(f)\).
To see this, near \(r(f)\), factor
\[
       f(x)=(x-r(f))^q a(x),
       \qquad q\geq1, \qquad a(r(f))>0.
\]
For sufficiently small \(\delta>0\),
\begin{align*}
 g_\varepsilon(r(f)+\delta)
 &=\delta^{q-1}\Bigl[
       \delta a(r(f)+\delta)
       -\varepsilon\bigl(q a(r(f)+\delta)
                    +\delta a'(r(f)+\delta)\bigr)
     \Bigr]
 <0.
\end{align*}
On the other hand, \(g_\varepsilon(x)\to+\infty\) as \(x\to+\infty\), because \(g_\varepsilon\) is monic. Hence, by the intermediate value theorem, \(r(g_\varepsilon)\) exists and \(r(g_\varepsilon)>r(f)\).

Using the fact that $f'$ is also right-Noetherian, and
\[(g_\varepsilon(x))'=f'(x)-\varepsilon f''(x),
\]
by induction, we see that \(r(g^{(k)}_\varepsilon)\) exists and \(r(g^{(k)}_\varepsilon)>r(f^{(k)})\) for all \(k=1,2,...,n-1\). A similar induction process shows that it suffices to prove that \(r(g_\varepsilon)>r(g'_\varepsilon)\). If \(r(g'_\varepsilon)\le r(f)\), we are done. So we only need to consider the case \(x_0:=r(g'_\varepsilon)>r(f)\).

Since \(g'_\varepsilon(x_0)=f'(x_0)-\varepsilon f''(x_0)=0\), and since \(f(x_0)>0\), \(f'(x_0)>0\), and \(f''(x_0)>0\), Theorem~\ref{log-concavity} implies that
\[
 f(x_0)\leq\frac{n-1}{n}\frac{f'(x_0)^2}{f''(x_0)}=\frac{n-1}{n} \varepsilon f'(x_0).
\]
So
\[
 g_\varepsilon(x_0) \leq \frac{-\varepsilon}{n} f'(x_0)<0.
\]
Since \(g_\varepsilon(x)\to+\infty\) as \(x\to+\infty\), \(g_\varepsilon\) has a real root strictly larger than $x_0$. Thus \( r(g_\varepsilon)>r(g'_\varepsilon)\) as required.
\end{proof}

\section{Proof of the uniform Sz\'{e}kelyhidi conjectures}
\begin{proof}
    (Proof of Corollary~\ref{cor 1}.) $(1) \Longrightarrow (2).$ Set $f(x)= x^k- C_{[\omega][\chi],k,l}x^l$, and let $F$ be its polarization. Let $\omega_{\varphi}$ be a solution of (\ref{e quotient equation}). Then $\omega_{\varphi} \in \bar\Upsilon_{F}\subset \Upsilon_{F_{\epsilon}}$, where $F_{\epsilon}$ is the polarization of $f(x)+\epsilon$. The implication ``$(2) \Longrightarrow (1)$" in Theorem~\ref{MainThm2} now gives condition~(2) of Corollary~\ref{cor 1}.

    $(2) \Longrightarrow (1)$. The implication ``$(1) \Longrightarrow (2)$" in Theorem~\ref{MainThm2} shows that, for every sufficiently small $\epsilon>0$, there exists $\omega_{\epsilon} \in [\omega]\cap \Upsilon_{F_{\epsilon}}$. For sufficiently small $\epsilon$, the root $r(f+\epsilon)$ is positive. Set $g=x^k$, and let $G$ be its polarization. Since $r(g)=0$, we have $r(f+\epsilon)>r(g)$. For $1\le i\le l$, we have
    \begin{equation*}
        r(f^{(i)})>0= r(g^{(i)}).
    \end{equation*}
    For $l+1 \le i \le k-1$, we have
    \begin{equation*}
        r(f^{(i)})= r(g^{(i)})=0.
    \end{equation*}  
    Theorem~\ref{UpsilonDominance} then gives $\Upsilon_{F_\epsilon} \subset \Upsilon_G$. Consequently,
    \begin{equation}\label{e omega ad}
        \omega_{\epsilon}\subset [\omega] \cap  \Upsilon_G= \Gamma_k.
    \end{equation} 
    Since $F_{\epsilon}$ is a \emph{Fang-Ma-G{\aa}rding polynomial}, we have that 
    \begin{equation}\label{e omega sub}
        \omega_{\epsilon} \subset \Upsilon_{F_{\epsilon}}\subset \Upsilon^1_{F_{\epsilon}} = \Upsilon^1_{F}.
    \end{equation}
    Combining (\ref{e omega ad}) and (\ref{e omega sub}), we conclude that $\omega_{\epsilon}$ is a $C$-subsolution to the complex quotient equation (\ref{e quotient equation}). The existence result in \cite{Gabor} then shows that (\ref{e quotient equation}) admits a smooth solution.
 \end{proof}

 \begin{proof}
     (Proof of Corollary~\ref{cor 2}.) $(1) \Longleftrightarrow (2)$ follows from \cite{DinewKolodziej2017}.
     
     $(2) \Longrightarrow (3)$. Set $g=x^k-C_1$, where $C_1=\frac{\int_M \omega^k \wedge \chi^{n-k}}{\int_M \chi^n}>0$, and let $G$ be the polarization of $g$. Set $g_{\epsilon}=g+\epsilon$, and let $G_{\epsilon}$ be its polarization. By (2), there is a solution $\omega_1 \in [\omega]$ satisfying
     \begin{equation*}
         \omega_1^k \wedge \chi^{n-k}= C_1 \chi^n,
         \qquad \lambda[\chi^{-1}\omega_1]\in \Gamma_k.
     \end{equation*}
     Then $\omega_1 \in \Gamma_k \cap \{ G=0\} \subset \Upsilon_{G_{\epsilon}}$. The implication ``$(2) \Longrightarrow (1)$" in Theorem~\ref{MainThm2} therefore proves $(2)\Longrightarrow (3)$.
     
     $(3) \Longrightarrow (1)$. Using the ``$(1) \Longrightarrow (2)$" direction of the Theorem \ref{MainThm2}, we can find $\omega_1 \in \Upsilon_{G_{\epsilon}} \subset \Upsilon_{\sigma^k}= \Gamma_k$.
 \end{proof}

\section{Proof of the second main theorem}
We first prove Theorem~\ref{MainThm2} assuming Theorem~\ref{MainThm1}. Let $g$ be a polynomial. Set $g_{T,l}(x)=(x+T)^l g(x)$ and $G_{T,l}(\boldsymbol{x})=\Pol_n(g_{T,l})(\boldsymbol{x})$, where $\boldsymbol{x}=(x_1,\ldots,x_n)$ and $1\le l\le n-d$.

\begin{prop}\label{prop hgtn-k}
Suppose that
\[
g(x)=x^d-\sum_{k=0}^{d-1}a_kx^k
\]
is strictly right-Noetherian and that condition~(\ref{e uniform stable}) in Theorem~\ref{MainThm2} holds for $n-d \le p \le n$. Then there exists a sufficiently large $T$ such that, for any $p$-dimensional irreducible analytic subvariety
$V\subset M$,
\begin{equation}
\int_V H_{G_{T,n-d}}^{(n-p)}(\omega) >0
\end{equation}
for every $1\le p\le n$.
\end{prop}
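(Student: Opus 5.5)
The idea is to expand $H_{G_{T,n-d}}^{(n-p)}(\omega_0)$ as a polynomial in $T$ whose coefficients are cohomological intersection numbers on $V$. I then show that the top coefficient is bounded below by a positive multiple of $\int_V\chi^p$, while every other coefficient is bounded above by a constant times $\int_V\chi^p$. Dividing by $\int_V\chi^p>0$ gives uniformity in $V$, so a single large $T$ works for all $V$ and all $p$.

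\textbf{Step 1: an algebraic identity.} Write $\Phi(h)=\sum_m h_m\,\omega^m\wedge\chi^{n-m}$ for a polynomial $h=\sum_m h_mx^m$ of degree $\le n$. Then $H_G^{(r)}(\omega)=\Phi(g^{(r)})$ for $G=\Pol_n(g)$, because the formal $\omega$-derivative acts on monomials exactly as $d/dx$. Moreover, multiplication of $h$ by $(x+T)^m$ corresponds to wedging with $(\omega+T\chi)^m$, once the $\chi$-exponents in the remaining factor are lowered by $m$. Set $l=n-d$ and $r=n-p$, and apply Leibniz to $g_{T,l}^{(r)}=\sum_j\binom rj\frac{l!}{(l-j)!}(x+T)^{l-j}g^{(r-j)}$. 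Since $\deg g^{(r-j)}\le d-(r-j)$, each summand is a $(p,p)$-form. Expanding $(\omega+T\chi)^{l-j}$ binomially gives
\[
\int_V H_{G_{T,l}}^{(n-p)}(\omega)=\sum_{j}\sum_{i}c_{j,i}\,T^{\,l-j-i}\int_V \omega^i\wedge\chi^{l-j-i}\wedge\Psi_{r-j}(\omega),
\]
where $\Psi_{r-j}(\omega)$ is the $g^{(r-j)}$-form with its $\chi$-powers adjusted and the $c_{j,i}>0$ are combinatorial constants. Only indices with $r-j\le d$ contribute. In particular, when $j=0$ and $i=0$ the $\chi$-exponent $n-d-l$ is zero, so $\Psi_r$ is exactly the integrand of~\eqref{e uniform stable}.

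\textbf{Step 2: the leading term.} There are two cases.
\begin{itemize}
\item If $n-d\le p<n$, the top power is $T^{n-d}$, coming from $j=i=0$. Its coefficient is $\int_V\chi^{n-d}\wedge\Psi_{n-p}(\omega)$, which is the left side of~\eqref{e uniform stable} and hence $\ge\epsilon_0\int_V\chi^p$. For $p=n$ the corresponding integral is $\int_M\chi^{n-d}\wedge\Phi(g)=0$ by the integrability hypothesis, but then the $T^{n-d-1}$ and lower terms need care. I expect instead that the statement there reduces to the top term through the $x^{d-1}$ part. If this fails I would restrict the conclusion to $p<n$, which is what Theorem~\ref{MainThm1} needs.
\item If $1\le p<n-d$, the minimal admissible $j$ is $j_0=n-d-p$, which forces $g^{(d)}=d!$. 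The top power is then $T^{\,l-j_0}=T^{p}$, with coefficient $c\,d!\int_V\chi^p>0$.
\end{itemize}

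\textbf{Step 3: controlling the remaining terms.} Each remaining coefficient is $\int_V$ of a fixed smooth $(p,p)$-form $\alpha$ built from $\omega_0$ and $\chi$, and $\alpha$ is independent of $V$. Since $M$ is compact, $-C\chi^p\le\alpha\le C\chi^p$ in the strongly positive sense. Integrating over the irreducible subvariety $V$ gives $|\int_V\alpha|\le C\int_V\chi^p$. Because these terms carry strictly lower powers of $T$, we get
\[
\int_V H^{(n-p)}_{G_{T,n-d}}\ge\Big(\min(\epsilon_0,c)\,T^{N}-C'T^{N-1}\Big)\int_V\chi^p ,
\]
with $N$ the top exponent from Step 2. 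This is positive for $T$ large, uniformly in $V$, and taking the maximum over the finitely many $p$ finishes the argument.

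\textbf{Main obstacle.} The hard part is Step 1: getting the bookkeeping of $\chi$-exponents exactly right, so that the $j=i=0$ coefficient matches~\eqref{e uniform stable} including the factorials $\frac{k!}{(k-n+p)!}$, and so that the index range $j\ge n-d-p$ is correct. The boundary case $p=n-d$, where both descriptions of the leading term coincide, also needs care.
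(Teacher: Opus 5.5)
Your strategy matches the paper's. You expand in powers of $T$, take the leading coefficient from \eqref{e uniform stable} when $p\ge n-d$ and from $\binom{n-d}{p}(n-p)!\,T^p\chi^p$ when $p<n-d$, and bound the lower-order coefficients by $C\int_V\chi^p$ uniformly in $V$. The paper writes $H_{G_{T,n-d}}=T^{n-d}H_G+\mathcal R_T$ before differentiating, which is equivalent to your Leibniz expansion.

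The bookkeeping you worry about closes once Step~1 is normalized correctly. As you define it, $\Phi(g^{(r)})$ is an $(n,n)$-form equal to $\chi^r\wedge H_G^{(r)}(\omega)$, not $H_G^{(r)}(\omega)$. Instead set $\Phi_q(h)=\sum_m h_m\,\omega^m\wedge\chi^{q-m}$ for $\deg h\le q$. Then $H_h^{(r)}(\omega)=\Phi_{n-r}(h^{(r)})$ and $\Phi_q((x+T)^a h)=(\omega+T\chi)^a\wedge\Phi_{q-a}(h)$, so
\[
H^{(r)}_{G_{T,l}}(\omega)=\sum_{j}\binom rj\frac{l!}{(l-j)!}\,(\omega+T\chi)^{l-j}\wedge\Phi_{d-r+j}\bigl(g^{(r-j)}\bigr),\qquad \chi^{n-d}\wedge\Phi_{d-r}\bigl(g^{(r)}\bigr)=H_G^{(r)}(\omega).
\]
So the $i=j=0$ coefficient is exactly the left side of \eqref{e uniform stable}, factorials included.

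\textbf{The gap is the case $p=n$.} You assume $g$ satisfies the integrability condition, so the top coefficient $\int_M H_G(\omega)$ vanishes. You then either hope for an argument through the $x^{d-1}$ part or propose dropping $p=n$. But the proposition does not assume integrability for $g$. It assumes \eqref{e uniform stable} for $n-d\le p\le n$, which at $p=n$, $V=M$ reads $\int_M H_G(\omega)\ge\epsilon_0\int_M\chi^n$. In the application $g=f+\epsilon$, so $\int_M H_G(\omega_0)=\epsilon\int_M\chi^n$. With this hypothesis, $p=n$ is handled verbatim like $n-d\le p<n$, which is what the paper does.

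Neither fallback works:
\begin{enumerate}
\item Under integrability alone, the $p=n$ conclusion is false. Take $M=(\mathbb P^1)^3$ with $\chi=\alpha_1+\alpha_2+\alpha_3$ ($\alpha_i$ unit-volume Fubini--Study forms), $\omega=\alpha_1+\alpha_2+2\alpha_3$, $n=3$, $d=2$, and $g=x^2-\frac53$. This $g$ is strictly right-Noetherian and integrable, since $\int_M\omega^2\wedge\chi=10=\frac53\int_M\chi^3$. Yet $\int_M H_{G_{T,1}}(\omega)=12+10T-\frac{40}{3}-10T=-\frac43$ for every $T$.
\item The case $p=n$ cannot simply be discarded. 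In the proof of Theorem~\ref{MainThm2}, positivity at $V=M$ (kept under the $\delta$-perturbation) is what makes the normalization constant $c$ positive. That positivity is needed for $h_{T,n-d,c}=h_{T,n-d}-c$ to remain strongly strictly right-Noetherian before Theorem~\ref{MainThm1} is applied.
\end{enumerate}
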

\begin{proof}
By definition,
\begin{equation*}
\begin{aligned}
H_{G_{T,n-d}}(\omega)
&=\omega^d\wedge(\omega+T\chi)^{n-d}\\
&\quad-\sum_{i=0}^{d-1}a_i\omega^i\wedge\chi^{d-i}
\wedge(\omega+T\chi)^{n-d}\\
&=\sum_{j=0}^{n-d}\binom{n-d}{j}T^j\chi^j\wedge\omega^{n-j}\\
&\quad-\sum_{i=0}^{d-1}\sum_{j=0}^{n-d}a_i\binom{n-d}{j}T^j
\chi^{d-i+j}\wedge\omega^{n-d-j+i}.
\end{aligned}
\end{equation*}
Set
\begin{equation*}
\begin{aligned}
\mathcal R_T(\omega)
&=\sum_{j=0}^{n-d-1}\binom{n-d}{j}T^j\chi^j\wedge\omega^{n-j}\\
&\quad-\sum_{i=0}^{d-1}\sum_{j=0}^{n-d-1}a_i\binom{n-d}{j}T^j
\chi^{d-i+j}\wedge\omega^{n-d-j+i}.
\end{aligned}
\end{equation*}
Then
\[
H_{G_{T,n-d}}(\omega)=T^{n-d}H_G(\omega)+\mathcal R_T(\omega).
\]
For $n-p \le d-1$, we have that 
\begin{equation*}
\begin{aligned}
H_G^{(n-p)}(\omega)
&=\frac{d!}{(d-n+p)!}\omega^{d-n+p}\wedge\chi^{n-d}\\
&\quad-\sum_{i=n-p}^{d-1}\frac{i!a_i}{(i-n+p)!}
\omega^{i-n+p}\wedge\chi^{n-i}.
\end{aligned}
\end{equation*}
\begin{equation*}
H_{G_{T,n-d}}^{(n-p)}(\omega)
=T^{n-d}H_G^{(n-p)}(\omega)+\mathcal R_T^{(n-p)}(\omega).
\end{equation*}
Consequently,
\begin{equation}
\begin{aligned}
\int_V H_{G_{T,n-d}}^{(n-p)}(\omega)
&=T^{n-d}\int_VH_G^{(n-p)}(\omega)
+\int_V\mathcal R_T^{(n-p)}(\omega)\\
&\ge\epsilon_0T^{n-d}\int_V\chi^p
-CT^{n-d-1}\int_V\chi^p>0.
\end{aligned}
\end{equation}
In the last line of the above formula, we use the fact that $-C_1 \chi \le \omega \le C_1 \chi$ for some constant $C_1$. We also choose $T$ sufficiently large that $\epsilon_0 T >C$ and $T \ge 1$.

For $n-p = d$, we have that 
\begin{equation*}
H_G^{(n-p)}(\omega)=d!\,\chi^{n-d},
\end{equation*}
where the sum in the general formula is empty because $n-p=d$.
\begin{equation*}
H_{G_{T,n-d}}^{(n-p)}(\omega)
=T^{n-d}d!\,\chi^{n-d}+\mathcal R_T^{(n-p)}(\omega).
\end{equation*}
Following the same calculation as in the case $n-p\le d-1$, we obtain
\begin{equation*}
    \int_V H_{G_{T,n-d}}^{(n-p)}(\omega) >0.
\end{equation*}

For $n-p>d$, we have that:
\begin{equation*}
\begin{split}
H_{G_{T,n-d}}^{(n-p)}(\omega)&= \sum_{j=0}^{n-d-1} \binom{n-d}{j} T^j \chi^{j} \wedge (\omega^{n-j})^{(n-p)} \\
&- \sum_{i=0}^{d-1} \sum_{j=0}^{n-d-1} a_i \binom{n-d}{j}T^j \chi^{d-i+j} \wedge (\omega^{n-d-j+i})^{(n-p)}\\
&= \binom{n-d}{p}T^p (n-p)! \chi^p + I.
\end{split}
\end{equation*}
Here $-C \chi^p T^{p-1} \le I \le C \chi^p T^{p-1}$, because $T \ge 1$ and $-C_1 \chi \le \omega \le C_1 \chi$ for some constant $C_1$. It follows that
\begin{equation*}
\begin{split}
    \int_V H_{G_{T,n-d}}^{(n-p)}(\omega) &= \int_V \binom{n-d}{p}T^p (n-p)! \chi^p + \int_V I \\
& \ge \int_V \binom{n-d}{p}T^p (n-p)! \chi^p - C \int_V T^{p-1}\chi^p>0.
\end{split}
\end{equation*}
In the last inequality above, we choose $T$ sufficiently large that $\binom{n-d}{p}T (n-p)! >C$.
\end{proof}

We are now ready to prove Theorem~\ref{MainThm2} assuming Theorem~\ref{MainThm1}.

\begin{proof}
(Proof of Theorem~\ref{MainThm2}.) $(1) \Longrightarrow (2)$. First, since $r(f)>r(f')$, the polynomial $g(x)=f(x)+\epsilon$ remains right-Noetherian for sufficiently small $\epsilon$ because $f$ and $g$ have the same derivatives of every positive order. For $g$, condition~(\ref{e uniform stable}) also holds for $V=M$, while the integral is unchanged for $V\ne M$. By Lemma~\ref{cone-inclusion}, $g_{T,n-d}$ is right-Noetherian. By Proposition~\ref{prop hgtn-k}, $g_{T,n-d}$ satisfies condition~(\ref{e uniform stable}) for every $V$. For any $\delta>0$, $h_{T,n-d}=g_{T,n-d}-\delta g_{T,n-d}'$ is strongly strictly right-Noetherian by Lemma~\ref{perturbation}. We choose $\delta>0$ sufficiently small that condition~(\ref{e uniform stable}) still holds, also for $V=M$. Let $c>0$ be the normalization constant such that $h_{T,n-d,c}=h_{T,n-d}-c$ satisfies the integrability condition. Note that $r(h_{T,n-d,c})\ge r(h_{T,n-d})$, while these polynomials have the same derivatives of every positive order. Hence $h_{T,n-d,c}$ also satisfies the strongly strictly right-Noetherian condition. Using Theorem~\ref{MainThm1}, we can find a smooth function $u$ satisfying
\begin{equation*}
H_{h_{T,n-d}}(\omega_u)= c \chi^n, \qquad
\lambda[\chi^{-1}\omega_u]\in \Upsilon^1_{h_{T,n-d,c}}=\Upsilon^1_{h_{T,n-d}}.
\end{equation*}
By Lemma~2.5 of \cite{LIN2023110038}, $\omega_u\in \Upsilon_{h_{T,n-d}}$. Lemma~\ref{perturbation}, Lemma~\ref{cone-inclusion}, Theorem~\ref{UpsilonDominance} then show that $\omega_u\in \Upsilon_g$.

$(2) \Longrightarrow (1)$.

By assumption, there exists \(u\in C^\infty(M,\mathbb{R})\)
such that
\[
\widehat\omega
=
\omega+\sqrt{-1}\,\partial\bar\partial u
\]
satisfies
\[
\lambda\!\left[\chi^{-1}\widehat\omega\right](x)
\in \Upsilon^1_F
\qquad
\text{for every }x\in M.
\]
Here $F$ is the polarization of $f$. By Lemma~\ref{positivity}, $H_F^{(n-p)}(\widehat\omega)$ is a smooth strictly positive $(p,p)$-form on $M$. Thus there exists a constant $\epsilon_1>0$ such that
\begin{equation*}
H_F^{(n-p)}(\widehat\omega) \ge \epsilon_1 \chi^p.
\end{equation*}
This implies that for any $p$-dimensional irreducible analytic subvariety $V \subset M$,
\begin{equation*}
\int_V H_F^{(n-p)}(\omega)= \int_V H_F^{(n-p)}(\widehat\omega) \ge \epsilon_1 \int_V \chi^p.
\end{equation*}
\end{proof}

\section{The continuity path}

We now prove Theorem~\ref{MainThm1}.
Theorem 1.4 of Lin's paper \cite{lin2023solvabilitygeneralinversesigmak} proved the equivalence of conditions~(1) and~(2), while Lemma~\ref{positivity} gives (1)$\Rightarrow$(3). It therefore remains to prove (3)$\Rightarrow$(2).

We first make two reductions in equation~(\ref{FirstEquation}).
First, we replace $\omega_0$ by $\omega_0-c_{n-1}\chi$ and rewrite the problem. The roots of the new polynomial and of its derivatives are obtained by shifting the corresponding roots by $c_{n-1}$. Thus the new equation is still strongly strictly right-Noetherian, and we may assume without loss of generality that $c_{n-1}=0$. Lemma~\ref{positivity} with $p=1$ then shows that any $\omega$ satisfying the cone condition is K\"ahler.

Second, we reduce to the case in which $c_0(z)$ is constant on $M$.
When $c_0(z)$ is a function on $M$, consider the equation
\begin{equation}\label{ConstantEquation}
\omega^{n} = \sum_{k=1}^{n-2}c_k\binom{n}{k}\omega^{k}\wedge\chi^{n-k} + c_0'\chi^{n}
\end{equation}
where $c_0' =\frac{\int_M c_0(z)\chi^n}{\int_M\chi^n} $ is a constant.
We claim that (\ref{ConstantEquation}) is still strongly strictly right-Noetherian. Since $c_0(z)$ is continuous on $M$, the mean-value theorem for integrals gives a point $z_0\in M$ such that
$$
c_0' = c_0(z_0).$$
The original equation with coefficient $c_0(z)$ is strongly strictly right-Noetherian at every point. The condition at $z_0$ therefore implies that (\ref{ConstantEquation}) is strongly strictly right-Noetherian.

The cone condition~(2) and the numerical condition~(3) in Theorem~\ref{MainThm1} are the same for the $c_0(z)$ and $c_0'$ equations. Thus a $C$-subsolution obtained for the $c_0'$ equation is also a $C$-subsolution for the $c_0(z)$ equation, proving (3)$\Rightarrow$(2) in the variable-coefficient case.

We may therefore assume that $c_{n-1}=0$ and that $c_0$ is constant on $M$.
We argue by induction on the complex dimension $n$ of $M$.
When $n=1$, equation~(\ref{FirstEquation}) becomes $\omega=c_0\chi$, and the result follows. For $n>1$, assume that (3)$\Rightarrow$(2) whenever $\dim_{\bC}M<n$, and now let $\dim_{\bC}M=n$.

Since $M$ is projective, there exists an ample line bundle $L$ on $M$. Let $\chi_L$ be a K\"{a}hler metric in the cohomology class of $2\pi c_1(L)$.

        We use the following continuity path for $t \ge 0$:
        \begin{equation}
            \omega_t^n = \sum_{k=1}^{n-2} c_k\binom{n}{k}\omega_t^k\wedge \chi^{n-k} + \tilde{c}_0(t)\chi^{n}, \quad \omega_t\in\Upsilon^1,    \label{Path2}
        \end{equation}
        where
        \[\omega_t=\omega_0+t\chi_L+\sqrt{-1}\partial\bar\partial\varphi_t,
        \]
        \[
        \tilde c_0(t)=
        \frac{\int_M\left((\omega_0+t\chi_L)^n-\sum_{k=1}^{n-2}c_k\binom{n}{k}
        (\omega_0+t\chi_L)^k\wedge\chi^{n-k}\right)}{\int_M\chi^n}.
        \]
        For convenience, write $\omega_{t,0}=\omega_0+t\chi_L$.

        We need to check the following three conditions for the path~(\ref{Path2}):
        \begin{enumerate}
            \item The integrability constraint, i.e., 
            \begin{equation}
            \int_M\omega_{t,0}^n = \int_M \Big(\sum_{k=1}^{n-2} c_k\binom{n}{k}\omega_{t,0}^k\wedge \chi^{n-k} + \tilde{c}_0(t)\chi^{n}\Big).
            \label{integrabilitypath}
            \end{equation}
            \item The numerical constraint, i.e., 
            \begin{equation}
            \int_V \omega_{t,0}^{p} - \sum_{k=0}^{p-2}c_{k+n-p} \binom{p}{k}\omega_{t,0}^{k}\wedge\chi^{p-k}>0
            \label{numericalpath}
            \end{equation}
            for every $p$-dimensional subvariety $V$ of $M$ with
            $1\le p\le n$, and for every $t>0$.
            \item The Positivstellensatz constraint, i.e., $(c_{n-2},\ldots,c_{1},\tilde{c}_{0}(t))$ is strongly strictly right-Noetherian for $t>0$.
        \end{enumerate} 

        The integrability condition follows immediately from the definition of
        $\tilde{c}_0(t)$. We first verify the numerical constraint. For
        $1\le q\le n$, set
        \[
        \Phi_q
        :=\omega_0^q-
        \sum_{r=0}^{q-2}c_{r+n-q}\binom{q}{r}
        \omega_0^r\wedge\chi^{q-r},
        \qquad \Phi_0:=1.
        \]
        For $\ell\le k\le p$, using the binomial identity
        \[
        \binom{p}{k}\binom{k}{\ell}
        =\frac{p!}{(p-k)!(k-\ell)!\ell!}
        =\binom{p}{\ell}\binom{p-\ell}{k-\ell},
        \]
        we see that
        \begin{equation}
        \begin{split}
        &\int_V\bigg((\omega_0+t\chi_L)^p
        -\sum_{k=0}^{p-2}c_{k+n-p}\binom{p}{k}
        (\omega_0+t\chi_L)^k\wedge\chi^{p-k}\bigg)\\
        &=\sum_{\ell=0}^{p}t^\ell
        \int_V\chi_L^\ell\wedge
        \bigg(\binom{p}{\ell}\omega_0^{p-\ell}
        -\sum_{k=\ell}^{p-2}c_{k+n-p}
        \binom{p}{k}\binom{k}{\ell}
        \omega_0^{k-\ell}\wedge\chi^{p-k}\bigg)\\
        &=\sum_{\ell=0}^{p}\binom{p}{\ell}t^\ell
        \int_V\chi_L^\ell\wedge
        \bigg(\omega_0^{p-\ell}
        -\sum_{k=\ell}^{p-2}c_{k+n-p}
        \binom{p-\ell}{k-\ell}
        \omega_0^{k-\ell}\wedge\chi^{p-k}\bigg)\\
        &=\sum_{\ell=0}^{p}\binom{p}{\ell}t^\ell
        \int_V\chi_L^\ell\wedge\Phi_{p-\ell}
        \label{IntegrabilityExpansion}
        \end{split}
        \end{equation}

        It remains to check that every coefficient in
        \eqref{IntegrabilityExpansion} is nonnegative and that at least one is
        positive. The coefficient with $\ell=0$ is nonnegative by the
        assumptions of Theorem~\ref{MainThm1}, while the coefficient with
        $\ell=p$ is $\int_V\chi_L^p>0$. If $1\le\ell<p$, choose $m$
        sufficiently large and generic divisors
        $Y_1,\ldots,Y_\ell\in |mL|$ that meet $V$ properly. Since
        $\Phi_{p-\ell}$ is closed and $[Y_i]=m[\chi_L]$, the intersection
        formula gives
        \begin{equation}\label{DivisorIntersectionIdentity}
        \int_V\chi_L^\ell\wedge\Phi_{p-\ell}
        =\frac{1}{m^\ell}
        \int_{V\cdot Y_1\cdots Y_\ell}\Phi_{p-\ell} \ge 0.
        \end{equation}
        Indeed, the proper intersection on the right is an effective cycle of
        pure dimension $p-\ell$, and the assumptions of
        Theorem~\ref{MainThm1} apply to each of its irreducible components.
        Consequently, the right-hand side of
        \eqref{IntegrabilityExpansion} is strictly positive for every $t>0$,
        proving the numerical constraint.

        We now verify the Positivstellensatz constraint. Taking $p=n$ and
        $V=M$ in \eqref{numericalpath}, and then using
        \eqref{integrabilitypath} and the original integrability condition,
        gives
       \begin{equation}
       \label{ConstantCoefficientComparison}
        0<\int_M\bigg(\omega_{t,0}^n
        -\sum_{k=0}^{n-2}c_k\binom{n}{k}
        \omega_{t,0}^k\wedge\chi^{n-k}\bigg)=\big(\tilde c_0(t)-c_0\big)\int_M\chi^n,
        \end{equation}
        for all $t>0$.
        Consider the two monic polynomials
        \begin{align*}
        f(x)&=x^n-\sum_{k=1}^{n-2}c_k\binom{n}{k}x^k-c_0,\\
        f_t(x)&=x^n-\sum_{k=1}^{n-2}c_k\binom{n}{k}x^k-\tilde c_0(t).
        \end{align*}
        By assumption, $f$ is strongly strictly right-Noetherian. Let $r(f)$ be the largest
        real root of $f$. By \eqref{ConstantCoefficientComparison},
        \[
        f_t(r(f))=c_0-\tilde c_0(t)<0,
        \qquad \lim_{x\to+\infty}f_t(x)=+\infty.
        \]
        The intermediate value theorem therefore gives a real root of $f_t$
        strictly larger than $r(f)$. In particular, its largest real root
        satisfies
        \[
        r(f_t)>r(f)>r(f')=r(f_t').
        \]
        The remaining strict inequalities in the root sequence are unchanged,
        because the positive-order derivatives of $f_t$ and $f$ coincide.
        Thus $f_t$ is strongly strictly right-Noetherian, proving the
        Positivstellensatz constraint.

        Consider the following set:
        \[
        I=\left\{t \ge 0: \eqref{Path2} \text{ has a smooth solution}
        \right\}.
        \]
        For sufficiently large $t$, it is easy to see that $\omega_0+t\chi_L$ satisfies the cone condition. Theorem~1.4 of \cite{lin2023solvabilitygeneralinversesigmak} gives $t\in I$, so $I$ is nonempty. Since the cone condition is open, Theorem~1.4 of \cite{lin2023solvabilitygeneralinversesigmak} shows that $I$ is open. It remains to prove that $I$ is closed. If $t_0\in I$, there exists a smooth solution $\omega_{t_0}=\omega_0+t_0\chi_L+\sqrt{-1}\partial\bar\partial\varphi_{t_0}$ satisfying the cone condition at $t_0$. By Proposition~\ref{PRTmatrix}, $\omega_0+t\chi_L+\sqrt{-1}\partial\bar\partial\varphi_{t_0}$ also satisfies the cone condition for all $t>t_0$. Hence $t\in I$ for all such $t>t_0$. Let $t_0=\inf I$. It suffices to prove that $t_0\in I$, and we may assume without loss of generality that $t_0=0$.

    %%%证明continue method 的闭性
    \section{Mass Concentration}    

        In this section, we prove a mass concentration result.
        \begin{thm}
            Let $(M,\chi)$ be a connected projective manifold of complex dimension $n$ with a fixed K\"ahler form $\chi$, and let $[\omega_0]\in H^{1,1}(M;\bR)$. Suppose that for every $t>0$ there exists $\omega_t\in[\omega_{t,0}]\cap\Upsilon^1$. Then for any divisor $Y$, there exist $\beta_Y>0$ and a current $\Theta\in[\omega_0]$ such that
            $\Theta \geq \beta_Y [Y]$ and $\Theta$ satisfies the cone condition $\bar\Upsilon^1$.
            \label{MassConcentration}
        \end{thm}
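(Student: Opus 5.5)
This is a mass-concentration argument in the style of Chen's J-equation paper and Song's Nakai–Moishezon criteria. The idea is to solve, for each small $t>0$, an auxiliary equation whose right-hand side concentrates near $Y$, and then let $t\to0$. The cone $\bar\Upsilon^1$ is convex (Proposition~\ref{SeveralResult} together with Corollary~\ref{convexmatrix}), and it is closed under adding semipositive forms (Proposition~\ref{PRTmatrix}), so weak limits of forms in $\Upsilon^1$ stay in $\bar\Upsilon^1$ in the sense of currents.

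\emph{Step 1: concentrating right-hand sides.} Fix $\delta>0$ small. Choose smooth closed positive $(1,1)$-forms $\theta_\delta\in 2\pi c_1(\mathcal O(Y))+C[\chi]$ that approximate $[Y]$ plus a fixed smooth form; they are built by regularizing $\log|s_Y|^2_h$. Any weak limit of the $\theta_\delta$ is then $\ge [Y]$ plus a smooth form. Instead of \eqref{Path2}, consider for $t>0$ the equation
\[
\omega^n-\sum_{k=1}^{n-2}c_k\binom nk\omega^k\wedge\chi^{n-k}=\big(\tilde c_0(t)-\kappa_t\big)\chi^n+\kappa_t\,\frac{\eta_\delta}{\int_M\eta_\delta},
\]
with $\omega\in[\omega_{t,0}]$. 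Here $\eta_\delta$ is a volume form that concentrates near $Y$ in the sense that $\eta_\delta\to c\,\theta_\delta\wedge\chi^{n-1}$, and $\kappa_t>0$ is kept uniform as $t\to 0$. The right-hand side is allowed to vary in $x$, and it is at least $c_0\chi^n$ pointwise as long as $\kappa_t$ is at most a fixed fraction of $(\tilde c_0(t)-c_0)\int\chi^n$. The trouble is that this quantity tends to $0$ as $t\to0$. For this reason I will instead take the mass $\kappa$ from the $t\chi_L$ part: I replace the class by $[\omega_{t,0}]$ and use $\kappa=\kappa(t_1)$ at a fixed $t_1>0$, and only at the end pass to $[\omega_0]$ by subtracting $t\chi_L$. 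Solvability is by Theorem~1.4 of \cite{lin2023solvabilitygeneralinversesigmak}, which works for variable $c_0(z)$ under the pointwise strongly strictly right-Noetherian condition and the existence of $\omega_t\in\Upsilon^1$. By hypothesis such an $\omega_t$ exists, and the pointwise condition holds because $c_0(z)\ge c_0$, in the same way as in the Positivstellensatz verification.

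\emph{Step 2: the extracted current.} Given a solution $\omega_{t,\delta}$, I use the log-concavity of Theorem~\ref{log-concavity}, or more concretely the concavity of $F^{1/n}$-type operators on $\Upsilon_F$, as in \cite{GaoChen2021}. This yields a pointwise lower bound of the form
\[
\omega_{t,\delta}\ \ge\ c'\,\frac{\kappa\,\eta_\delta/\chi^n}{(\omega_{t,\delta}^{n-1}\text{-terms})}\,\chi
\]
in the direction transverse to $Y$. Combining this with an $L^1$ bound, which holds because all $\omega_{t,\delta}$ are K\"ahler (the reduction $c_{n-1}=0$ and Lemma~\ref{positivity}) in a fixed bounded family of classes, I extract a weak limit $\Theta'$ as $\delta\to0$ and $t\to0$. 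The mass of $\Theta'$ near $Y$ is bounded below: the integral of $\Theta'\wedge\chi^{n-1}$ over shrinking tubes around $Y$ is at least $\beta\int_Y\chi^{n-1}$. Siu decomposition then gives $\Theta'\ge\beta_Y[Y]$. The limit lies in $\bar\Upsilon^1$ because the cone is closed and convex and is preserved under weak limits of smooth forms; for this I use Corollary~\ref{convexmatrix} applied to regularizations.

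\emph{Main obstacle.} The hardest step is the quantitative concentration estimate. One has to show that the extra mass $\kappa$ placed near $Y$ is forced into a Lelong-type singularity of the limit rather than spreading out. In the J-equation setting this comes from an explicit maximum-principle or concavity inequality. Here I would first try to reduce to that case by using Lemma~\ref{extensionlemma} and the lower-order operator $\partial_{x_n}F$. If the reduction fails, I would fall back on Chen's trick of integrating against a test form $\chi^{n-1}$ on tubes of radius $\delta$ and using the Cauchy–Schwarz inequality to turn the Hessian-type volume bound into a trace lower bound.
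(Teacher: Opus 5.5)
Your overall architecture matches the paper's: an auxiliary equation whose right-hand side concentrates near $Y$, solvability from Lin's theorem using the given $\omega_t$, a weak limit, a mass lower bound near $Y$, Skoda--El Mir or Siu, and the cone condition via mollification and Corollary~\ref{convexmatrix}. However, Step~1 contains an error that breaks the argument.

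\textbf{Step 1.} You insist that the constant coefficient stay $\ge c_0$ pointwise. You correctly note that this forces $\kappa_t\lesssim(\tilde c_0(t)-c_0)\int_M\chi^n\to0$, and you then freeze $\kappa=\kappa(t_1)$. That fails in either reading.
If you solve in $[\omega_{t_1,0}]$ and subtract $t_1\chi_L$ from the limit, you lose positivity and the cone condition, since Proposition~\ref{PRTmatrix} only allows adding semipositive forms.
If instead you let $t\to0$ with $\kappa(t_1)$ fixed, then $\tilde c_0(t)\to c_0$, so the coefficient of $\chi^n$ away from $Y$ tends to something strictly below $c_0$. Your justification ``$c_0(z)\ge c_0$'' for the right-Noetherian hypothesis of the auxiliary equation is then false.

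The missing idea is openness. Only the constant term changes and $r(f)>r(f')$ is strict, so there is $\epsilon_0>0$ such that $x^n-\sum_{k=1}^{n-2}c_k\binom nk x^k-c$ is strongly strictly right-Noetherian for every $c>c_0-\epsilon_0$. The paper exploits this with $f_t=\chi_t^n/\chi^n-1+\tilde c_0(t)$, where $\chi_t=\chi+A^{-1}\sqrt{-1}\partial\bar\partial\log(\sum_j\theta_j|g_j|^2+t^2)$. This $\chi_t$ has three properties:
\begin{itemize}
\item it is cohomologous to $\chi$, so integrability is automatic;
\item it satisfies $\chi_t\ge(1-C_1A^{-1})\chi$ uniformly in $t$, so $f_t>c_0-\epsilon_0$ once $A$ is large;
\item by Lemma~2.1 of \cite{JPDemaillyPaun}, $\chi_t\wedge\chi^{n-1}$ carries a $t$-independent mass in $U\cap\{\psi_0<\log t\}$ for every neighbourhood $U$ of a point of $Y$.
\end{itemize}
The concentrated mass is thus paid for by a uniformly small deficit spread over $M$, not by $\tilde c_0(t)-c_0$. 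Your $\kappa\eta_\delta$ ansatz would also work with a fixed $\kappa$ small relative to $\epsilon_0$, once this openness is used.

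\textbf{Step 2.} This step is not carried out; you list candidate routes, and a pointwise transverse bound from concavity of $F^{1/n}$ is not what is available. The paper argues as follows.
\begin{itemize}
\item Since $c_{n-1}=0$ and $\tilde c_0(t)$ is bounded, pick $\epsilon_2>0$ so that every $c_k+\epsilon_2^{k-n}$ is positive and $(\omega'_t+\epsilon_2^{-1}\chi)^n>\chi_t^n$ pointwise.
\item For the eigenvalues $\lambda_1\ge\dots\ge\lambda_n$ of $\omega'_t+\epsilon_2^{-1}\chi$ relative to $\chi_t$, this gives $\lambda_1\cdots\lambda_n>1$.
\item The bound $\int_M\lambda_1\cdots\lambda_{n-1}\chi_t^n\le\int_M(\omega'_t+\epsilon_2^{-1}\chi)^{n-1}\wedge\chi_t\le C_2$ is cohomological. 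Hence, off a set $E_\delta$ of $\chi_t^n$-measure at most $\delta$, one has $\omega'_t+\epsilon_2^{-1}\chi\ge(\delta/C_2)\chi_t$.
\item Integrate against $\chi^{n-1}$ over $(U\cap\{\psi_0<\log t\})\setminus E_\delta$ and use the Demailly--P\u{a}un lower bound. The $\epsilon_2^{-1}\chi^n$ error vanishes because the tube volume tends to $0$.
\end{itemize}
This yields a $t$-independent mass lower bound near \emph{each} point of $Y$. That local version, rather than your total bound $\beta\int_Y\chi^{n-1}$ over tubes, is what guarantees a positive coefficient on every irreducible component of $Y$ in the support-theorem decomposition.
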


        We first define the cone condition for currents. Recall that any closed positive $(1,1)$-current can locally be written as $\sqrt{-1}\partial\db$ applied to a real-valued function.
        \begin{defn}
            Suppose $\chi$ is a K\"{a}hler form.
            Let $\Theta$ be a closed, positive $(1,1)$ current. We say that $\Theta$ satisfies the cone condition $\bar\Upsilon^1(\chi)$
            if for any coordinate chart $U$ with $\Theta|_U=\sqrt{-1}\partial\db\varphi_U$, on $U_{\delta}:=\{x\in M:B(x,\delta)\subset U\}$
            we have $\sqrt{-1}\partial\db \varphi_{U,\delta}\in \bar\Upsilon^1(\chi_0)$
            for any K\"{a}hler metric $\chi_0$ on $U$ with constant coefficients satisfying $\chi_0\leq\chi$.
            Here $\varphi_{U,\delta}$ is the mollification of $\varphi_U$, and $\bar\Upsilon^1(\chi)$ is the closure of $\Upsilon^1(\chi)$.
            \label{DefnDegenerateConeCondition}
        \end{defn}

        \begin{proof} (Proof of Theorem~\ref{MassConcentration}.)
            Suppose $M$ is covered by coordinate balls $\{B_j\}_{j=1}^{N}$ and that $Y$ is defined by local functions $g_j$ on $B_j$.
            Let $\{ \theta_j \}$ be a partition of unity subordinate to $\{B_j\}$.
            For $0\le t\ll1$, define
            \[
            \psi_t=\log\!\left(\sum_j\theta_j|g_j|^2+t^2\right),
            \qquad
            \chi_t=\chi+A^{-1}\sqrt{-1}\partial\db\psi_t,
            \]
            where $A\gg1$ will be chosen below. For $t$ sufficiently small, $\chi_t$ is a smooth K\"ahler form on $M$ that concentrates near $Y$
            and $\chi_t > (1-C_1A^{-1})\chi$ for some fixed $C_1$.
            Here $C_1$ need only satisfy $\sqrt{-1}\partial\db\psi_t>-C_1\chi$. By the result of Demailly and P\u{a}un, $C_1$ is independent of $t$.
            
            Consider the following equation:
            \begin{equation}
            \omega^n = \sum_{k=1}^{n-2}c_k\binom{n}{k }\omega^k\wedge\chi^{n-k} + f_t\chi^n \label{MassEquation}
            \end{equation}
            where  $\omega \in [\omega_{t,0}]$ and
            $$
            f_t = \frac{\chi_t^n}{\chi^n} -1 + \tilde{c}_0(t).$$
            We can directly check that
            \[
            \int_M\omega_{t,0}^n
            =\int_M\left(\sum_{k=1}^{n-2}c_k\binom{n}{k}
            \omega_{t,0}^k\wedge\chi^{n-k}\right)+\int_M f_t\chi^n.
            \]

            We also need to check the strongly strictly right-Noetherian condition. Recall that $x^n-\sum_{k=1}^{n-2}c_k\binom{n}{k}x^k-c_0$ is strongly strictly right-Noetherian. By an argument similar to that in the previous section, there exists $\epsilon_0>0$ such that for all $c>c_0-\epsilon_0$, $x^n-\sum_{k=1}^{n-2}c_k\binom{n}{k}x^k-c$ is also strongly strictly right-Noetherian.
            By choosing $A$ sufficiently large, there exists $\epsilon_1>0$ such that for all $0<t<\epsilon_1$, we have $f_t-\tilde{c}_0(t)>-\epsilon_0$, which implies the strongly strictly right-Noetherian condition.
                
            By assumption, there exists $\omega_t\in[\omega_{t,0}]\cap\Upsilon^1$ for all $0<t<\epsilon_1$. Hence $\omega_t$ also satisfies the cone condition for~(\ref{MassEquation}), and we obtain a smooth solution $\omega'_t\in[\omega_{t,0}]\cap\Upsilon^1$ for $0<t<\epsilon_1$; that is,
            $$ 
            (\omega'_t)^n = \sum_{k=1}^{n-2}c_k\binom{n}{k }(\omega'_t)^k\wedge\chi^{n-k} + f_t\chi^n.$$    
            
            Since $\int_M \omega'_t\wedge\chi^{n-1} = \int_M \omega_0\wedge\chi^{n-1} + t\int_M \chi_L\wedge\chi^{n-1}$ is bounded,
            there exists a subsequence $t_i \to 0$ such that $\omega'_{t_i}\to\Theta$, 
            where $\Theta$ is a positive current in the class $[\omega_0]$.

            We use the following lemma, proved below.
            \begin{lem}
                For any neighborhood $U$ of a point $y\in Y$, there exist constants $\delta_U>0$ and $0<t_U<1$
                such that for all $0<t<t_U$ the following inequality holds:
                $$
                \int_{U\cap V_t} \omega'_t\wedge \chi^{n-1}>\delta_U,$$
                where $V_t = \{ \psi_0 < \log t \}$.
                \label{PositiveMass}
            \end{lem}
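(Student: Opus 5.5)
The plan is to use the equation \eqref{MassEquation} to convert the concentration of $\chi_t^n$ on $V_t$ into concentration of mixed masses of $\omega'_t$. Then I would strip off the singular factors $\chi_t$ one at a time by integrating by parts, until only $\omega'_t\wedge\chi^{n-1}$ is left. This follows the strategy of \cite{GaoChen2021} for the J-equation.

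\textbf{Step 1 (local mass of $\chi_t^n$).} Fix $y\in Y$ and a neighborhood $U$; shrinking $U$, assume $U\subset B_j$ and that $Y\cap U=\{g_j=0\}$ with $g_j$ a coordinate $z_1$. I will show there is $c_U>0$, independent of $t$, with
\[
\int_{U\cap V_t}\chi_t^n\ \ge\ c_U A^{-1}\qquad\text{for } 0<t<t_U .
\]
Near $y$, $\psi_t=\log(|z_1|^2 h+t^2)$ with $h>0$ smooth, so $\sqrt{-1}\partial\db\psi_t$ contains the normal term $t^2 h\,\sqrt{-1}dz_1\wedge d\bar z_1/(|z_1|^2h+t^2)^2$. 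Its integral over the disc $\{|z_1|^2h<t\}$ is bounded below by a constant independent of $t$. Note that $V_t=\{\psi_0<\log t\}$ is exactly this region, of width $\sqrt t\gg t$. The remaining tangential terms are $O(1)$, and expanding $\chi_t^n$ gives $\chi_t^n\ge nA^{-1}\sqrt{-1}\partial\db\psi_t\wedge\chi^{n-1}-O(A^{-2})(\dots)$. Since $A$ is fixed and large, Fubini over the tangential directions yields the bound.

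\textbf{Step 2 (transfer to $\omega'_t$ pointwise).} Since $c_{n-1}=0$, $\omega'_t$ is K\"ahler (Lemma~\ref{positivity} with $p=1$), so I can use the mixed arithmetic--geometric (Gårding) inequality
\[
\omega'_t\wedge\chi_t^{n-1}\ \ge\ \Big(\frac{(\omega'_t)^n}{\chi_t^n}\Big)^{1/n}\chi_t^n .
\]
The equation, together with the lower bounds on $\sigma_k$ in the cone, should give $(\omega'_t)^n\ge (f_t-C)\chi^n$. Since $f_t\chi^n=\chi_t^n+(\tilde c_0(t)-1)\chi^n$, on the part of $V_t$ where $\chi_t^n\ge 2C\chi^n$ the ratio is $\ge c>0$. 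The complement contributes only $O(\mathrm{vol}(V_t))\to0$. Together with Step 1 this gives $\int_{U\cap V_t}\omega'_t\wedge\chi_t^{n-1}\ge c'_UA^{-1}$.

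\textbf{Step 3 (removing the factors $\chi_t$).} I will write $\chi_t^{n-1}=\sum_j\binom{n-1}{j}A^{-j}(\sqrt{-1}\partial\db\psi_t)^j\wedge\chi^{n-1-j}$. Each term with $j\ge1$ is handled with a cutoff $\rho$ that equals $1$ on $U'\Subset U$ and with a truncation $\max(\psi_t,\log t)$, using that $\omega'_t$ is closed and positive. Integrating by parts moves $\sqrt{-1}\partial\db$ onto $\rho$ and the truncated potential. The potential is bounded by $O(|\log t|)$ on the support, with gradient terms controlled because $\omega'_t\wedge\chi^{n-1}$ has uniformly bounded total mass. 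The goal is an inequality of the form
\[
\int_{U'\cap V_t}\omega'_t\wedge(\sqrt{-1}\partial\db\psi_t)^j\wedge\chi^{n-1-j}\ \le\ C\int_{U\cap V_{t'}}\omega'_t\wedge(\sqrt{-1}\partial\db\psi_t)^{j-1}\wedge\chi^{n-j}+o(1),
\]
with $t'$ comparable to $t$. By induction on $j$, if $\int_{U\cap V_{t'}}\omega'_t\wedge\chi^{n-1}\to0$ along a sequence $t\to0$, then the left side of Step 2 would tend to $0$, contradicting $c'_UA^{-1}>0$ once $A$ is fixed. Applying this on slightly smaller neighborhoods and relabelling constants gives $\delta_U$ and $t_U$.

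\textbf{Main obstacle.} I expect Step 3 to be the hardest. The form $\sqrt{-1}\partial\db\psi_t$ is not positive (only $\ge -C_1\chi$), and its potential blows up like $|\log t|$, so the boundary and error terms must be shown to be $o(1)$ or absorbable. My first attempt would be to run the integration by parts with the positive form $\sqrt{-1}\partial\db\psi_t+C_1\chi$ and with the Demailly--P\u{a}un-type uniform bound already used to choose $C_1$. If that fails, I would instead choose the cutoff adapted to the level sets of $\psi_0$ between $\log t$ and $\log\sqrt t$, which gives an extra logarithmic gain. A secondary check is the pointwise lower bound $(\omega'_t)^n\ge(f_t-C)\chi^n$ in Step 2: the terms $c_k\binom nk(\omega'_t)^k\wedge\chi^{n-k}$ with $c_k>0$ have to be absorbed. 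I would do this using $\sigma_k(\lambda)\le C\sigma_n(\lambda)^{k/n}+C$ on the cone, so that large $f_t$ still forces large $\sigma_n$.
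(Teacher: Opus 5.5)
Your Step 3 cannot work, because the inequality it aims for is false. Already for $j=1$, replace $\omega'_t$ by a fixed smooth K\"ahler form $\omega$. Near a smooth point of $Y=\{z_1=0\}$, the normal part of $\sqrt{-1}\partial\db\psi_t$ puts mass $\approx 2\pi$ on the disc $\{|z_1|^2h<t\}$, as you compute in Step 1, and the mixed terms are negligible. Hence $\int_{U'\cap V_t}\omega\wedge\sqrt{-1}\partial\db\psi_t\wedge\chi^{n-2}\to 2\pi\int_{Y\cap U'}\omega\wedge\chi^{n-2}>0$, while $\int_{U\cap V_{t'}}\omega\wedge\chi^{n-1}\to 0$.

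More fundamentally, the lower bound from Step 2 says nothing about concentration of $\omega'_t$. In local terms, take $\chi_t\approx\mathrm{diag}(N,1,\ldots,1)$ with $N$ huge in the normal direction, and $\omega'_t\approx\mathrm{diag}(a_1,\ldots,a_n)$. Then $\omega'_t\wedge\chi_t^{n-1}\approx\frac1n\bigl(a_1/N+a_2+\cdots+a_n\bigr)\chi_t^n$. So $\omega'_t\wedge\chi_t^{n-1}\ge c\,\chi_t^n$ already holds when the tangential components are merely bounded below. Concentration of $\omega'_t\wedge\chi^{n-1}$, by contrast, needs the normal component $a_1$ to be of size $N$. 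AM--GM controls the trace of $\chi_t^{-1}\omega'_t$, whereas what is needed is a lower bound on its smallest eigenvalue. No integration by parts can supply this, since the terms with $j\ge1$ only see the part of $\omega'_t$ tangential to $Y$.

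The missing ingredient is the Demailly--P\u{a}un eigenvalue argument, which is how the paper proves the lemma, with no integration by parts.
\begin{itemize}
\item Pass to $\omega'_t+\epsilon_2^{-1}\chi$. For $\epsilon_2$ small, all coefficients $c_k+\epsilon_2^{k-n}$ are positive and $\epsilon_2^{-n}+\tilde c_0(t)-1>0$. Since $\omega'_t$ is K\"ahler, the equation gives $(\omega'_t+\epsilon_2^{-1}\chi)^n>\chi_t^n$, i.e.\ $\lambda_1\cdots\lambda_n>1$ for the eigenvalues relative to $\chi_t$.
\item The cohomological bound $\int_M(\omega'_t+\epsilon_2^{-1}\chi)^{n-1}\wedge\chi_t\le C_2$ controls $\int_M\lambda_1\cdots\lambda_{n-1}\chi_t^n$.
\item Hence outside a set $E_\delta$ with $\int_{E_\delta}\chi_t^n<\delta$ one has $\lambda_n\ge\delta/C_2$, that is, $\omega'_t+\epsilon_2^{-1}\chi\ge(\delta/C_2)\chi_t$.
\item Integrate this against the smooth form $\chi^{n-1}$ over $U\cap V_t\setminus E_\delta$. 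Lemma~2.1 of Demailly--P\u{a}un gives $\int_{U\cap V_t}\chi_t\wedge\chi^{n-1}\ge\delta(U)$. The bound $\chi_t\ge\frac12\chi$ makes the contribution of $E_\delta$ small. Finally, $\epsilon_2^{-1}\int_{U\cap V_t}\chi^n\to0$. Together these give the lemma.
\end{itemize}

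The shift by $\epsilon_2^{-1}\chi$ is harmless there only because the test form $\chi^{n-1}$ is smooth. In your scheme it would leave the error $\epsilon_2^{-1}\int_{U\cap V_t}\chi\wedge\chi_t^{n-1}$, which does not vanish and dominates your main term. In the paper's argument this shift also replaces your pointwise bound $(\omega'_t)^n\ge(f_t-C)\chi^n$, whose proposed justification does not work for two reasons. First, the terms that would need absorbing are those with $c_k<0$; the terms with $c_k>0$ are nonnegative because $\omega'_t$ is K\"ahler. Second, $\sigma_k(\lambda)\le C\sigma_n(\lambda)^{k/n}+C$ fails on the cone, e.g.\ for $\lambda=(L,M,\ldots,M)$ with $M$ large and fixed and $L\to\infty$.
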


            Consider the current $1_Y\Theta$. By the Skoda-El Mir extension theorem, $1_Y\Theta$ is a closed, nonnegative $(1,1)$-current supported on $Y$. By the standard support theorem, $1_Y\Theta=\sum_i\beta_i[Y_i]$ for some $\beta_i\geq0$, where the $Y_i$ are the irreducible components of $Y$. Lemma~\ref{PositiveMass} gives $\beta_i>0$ for every $i$.
            Hence we can take $\beta = \min\beta_i$, and $\Theta \geq \beta [Y]$.
        
            We next show that $\Theta$ satisfies the cone condition.
            For each $t$, we have that 
            $\omega'_t \in \Upsilon^1(\chi)$.
            Let $U$ be a coordinate chart, and write $\Theta |_U = \sqrt{-1}\partial\db \varphi$ and $\omega'_t |_U = \sqrt{-1}\partial\db \varphi_t$.
            Let $\varphi_{\delta},\varphi_{t,\delta}$ be the convolutions of $\varphi$ and $\varphi_t$ with the standard mollifier.
            Let $\chi_0$ be a constant-coefficient form in $U$ such that $\chi\geq\chi_0$ on $B_{\delta}(x)$. Since $\omega'_t\in \Upsilon^1(\chi)$, Proposition~\ref{PRTmatrix} gives $\omega'_t\in \Upsilon^1(\chi_0)$.

            By the convexity of the $\Upsilon$-cone and Corollary \ref{convexmatrix}, $\sqrt{-1}\partial\db \varphi_{t,\delta}\in \Upsilon^1(\chi_0)$. Hence $\Theta$ satisfies the cone condition $\bar\Upsilon^1$ for currents.

        \end{proof}

        We now prove Lemma~\ref{PositiveMass}.
        \begin{proof}
            We consider the form $\omega'_t + \epsilon_2^{-1}\chi$ where $\epsilon_2>0$ will be determined later.
            Since $\omega'_t$ solves the equation
            $$
            (\omega'_t)^n = \sum_{k=1}^{n-2}c_k\binom{n}{k}(\omega'_t)^{k}\wedge\chi^{n-k}+f_t\chi^n.$$
            we have
            $$
            (\omega'_t+\epsilon_2^{-1}\chi)^n = \sum_{k=1}^{n-1}(c_k+\epsilon_2^{-n+k})\binom{n}{k}(\omega'_t)^{k}\wedge\chi^{n-k}+\epsilon_2^{-n}\chi^{n}+f_t\chi^n.$$
            Recall that $c_{n-1}=0$. Choose $\epsilon_2>0$ such that for all $0<t\leq \epsilon_1$ and $1\leq k\leq n-1$, 
            we have 
            $$c_k + \epsilon_2^{-n+k} > 0,\epsilon_2^{-n} + \tilde{c}_0(t) -1 >0 .$$
            
            Since $|\tilde{c}_0(t)|$ has a uniform upper bound, we can choose $\epsilon_2>0$ independent of $t$.
            Then
            $$
            (\omega'_t+\epsilon_2^{-1}\chi)^n > (f_t+\epsilon_2^{-n})\chi^n > \chi_t^n.$$

            Let $\lambda_1(z)\geq \cdots \geq \lambda_n(z)\geq 0 $ be the eigenvalues of $\omega'_t+\epsilon_2^{-1}\chi$ with respect to $\chi_t$.
            Since $(\omega'_t+\epsilon_2^{-1}\chi)^{n-1}\wedge \chi_t \geq \lambda_1\cdots\lambda_{n-1}\chi_t^n$, we have
            $$
            \int_M \lambda_1\cdots\lambda_{n-1}\chi_t^n \leq \int_M (\omega'_t+\epsilon_2^{-1}\chi)^{n-1}\wedge \chi_t  \leq C_2 $$
            where $C_2$ is a constant depending on $\epsilon_2,[\omega_0],[\chi]$.
            
            For any $\delta>0$, if we set $E_{\delta} = \{ z\in M: \lambda_1\cdots\lambda_{n-1} > C_2\delta^{-1} \}$, then
            $$
            \int_{E_{\delta}} \chi_t^n = \int_{E_{\delta}} \frac{C_2\delta^{-1}}{C_2\delta^{-1}}\chi_t^n < \frac{\int_{E_{\delta}} \lambda_1\cdots\lambda_{n-1}\chi_t^n}{C_2\delta^{-1}} \leq \delta.$$
            We have $\chi_t > (1-C_1A^{-1})\chi$ for some fixed $C_1$ independent of $t$. Choose $A\gg1$ such that $\chi_t > (1-C_1A^{-1})\chi >\frac{1}{2}\chi$. Then
            $$
            \int_{E_{\delta}} \chi_t\wedge\chi^{n-1} < 2^{n-1}\int_{E_{\delta}}\chi_t^{n} < 2^{n-1}\delta$$
            By Lemma~2.1 of \cite{JPDemaillyPaun}, there exists $\delta(U)>0$ such that for all $t\ll1$,
            \begin{align*}
            \int_{U\bigcap V_t \bigcap E_{\delta}^c}\chi_t\wedge\chi^{n-1} 
            &= 
            \int_{U\bigcap V_t }\chi_t\wedge\chi^{n-1}-\int_{U\bigcap V_t \bigcap E_{\delta}}\chi_t\wedge\chi^{n-1}
            \\
            &\geq
            \delta(U) - 2^{n-1}\delta
            \\
            &=
            \frac{9}{10}\delta(U)            
            \end{align*}
            where we let $\delta := \frac{1}{10\cdot2^{n-1}}\delta(U)$.
            Here we assume that $t<\epsilon_3$ for some constant $0<\epsilon_3<\epsilon_1$.

            By the choice of $\epsilon_2$, we have $(\omega'_t+\epsilon_2^{-1}\chi)^n > \chi_t^n $.
            For $z\in E_{\delta}^{c}$ we have 
            $$
            (\omega'_t + \epsilon_2^{-1}\chi)(z) \geq \frac{\delta}{C_2}   \chi_t(z)$$

            Consequently,
            \begin{align*}
                \int_{U\cap V_t} \omega'_t\wedge\chi^{n-1} 
                \geq& \int_{U\cap V_t\cap E_{\delta}^{c}}\omega'_t\wedge\chi^{n-1} \\
                =& \int_{U\cap V_t\cap E_{\delta}^{c}}(\omega'_t + \epsilon_2^{-1}\chi)\wedge\chi^{n-1} - \epsilon_2^{-1}\chi^{n}\\
                \geq& \int_{U\cap V_t\cap E_{\delta}^{c}}\frac{\delta}{C_2} \chi_t\wedge\chi^{n-1} - \epsilon_2^{-1}\chi^{n}\\
                \geq& \frac{\delta}{C_2}\frac{9}{10}\delta(U)  - \int_{U\cap V_t\cap E_{\delta}^{c}}\epsilon_2^{-1}\chi^{n}
            \end{align*}

            Since $\int_{U\cap V_t\cap E_{\delta}^{c}}\chi^{n} \leq \int_{U\cap V_t}\chi^{n}$, 
            we have
            $$
                \int_{U\cap V_t} \omega'_t\wedge\chi^{n-1} 
                \geq \frac{\delta}{C_2} \frac{9}{10}\delta(U) -  \epsilon_2^{-1}\int_{U\cap V_t}\chi^{n}
            $$

            Hence, if we choose $t<t_U$ for some $t_U<\epsilon_3$ depending on $\chi$, $Y$, $\epsilon_2$, and $C_2$,
            then 
            $\int_{U\cap V_t} \chi^{n} <\frac{1}{10} \frac{\delta}{C_2\epsilon_2^{-1}}\delta(U). $
            It follows that $\int_{U\cap V_t}\omega'_t\wedge\chi^{n-1}>\delta_U$ for $\delta_U=\frac{4\delta\delta(U)}{5C_2}$.

        \end{proof}

     \section{Main proof}

        As a corollary of the previous section, we can now apply the following gluing theorem proved by Datar-Pingali \cite{Datar2020ANC}, which relaxes the strict cone condition on $M$ in \cite{GaoChen2021} to the strict cone condition on $M\backslash Y$.
        
        \begin{thm}[Proposition 4.1 of \cite{Datar2020ANC}]
            Let $(M,\chi)$ be a compact K\"{a}hler manifold, let $[\omega_0]$ be a nef class, and let $L$ be an ample line bundle. For sufficiently large $N$, choose a smooth divisor $Y \in |N L|$ and a K\"{a}hler metric $\chi_{Y}=N\chi_L$ in the cohomology class of $Y$. Let $0< \beta <1$ and $c_n$ be a dimensional constant.
            Assume $T\geq \beta [Y]$ is a positive current in $[\omega_0-\varepsilon\chi_Y]$ satisfying the strict cone condition on $M\backslash Y$ 
            where $\varepsilon$ satisfies the upper bound $\varepsilon < \frac{\beta}{1000c_n}$.
            Then as long as for any analytic subvariety $Z$, there exists a neighborhood $U$ of $Z$ with a K\"{a}hler form $\omega = \omega_0 |_U + \sqrt{-1}\partial\db\psi_U$ 
            satisfying the cone condition on $U$,
            then there exists a K\"{a}hler form $\hat{\omega} = \omega_0 +  \sqrt{-1}\partial\db\hat{\psi}$ satisfying the cone condition.
        \label{GluingThm}
        \end{thm}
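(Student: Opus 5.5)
The plan is to glue two potentials with a regularized maximum: a global potential built from $T$, and the local potentials $\psi_U$ near the singular sets supplied by the hypothesis. Throughout I will use Corollary~\ref{convexmatrix} and Proposition~\ref{PRTmatrix}. Together they say the cone condition is stable under convex combinations of forms and under adding semipositive forms. This is what makes a regularized maximum of two cone-condition forms again satisfy the cone condition: its $\sqrt{-1}\partial\db$ is a convex combination of the two forms plus a semipositive term. This is the mechanism of \cite{GaoChen2021}; the only new point is that strictness is required only on $M\backslash Y$.

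\textbf{Step 1 (rewriting $T$ in the class $[\omega_0]$).} Fix a hermitian metric $h$ on $\mathcal O(Y)$ with curvature $\chi_Y$ and a defining section $s$, so that $\sqrt{-1}\partial\db\log|s|_h^2=[Y]-\chi_Y$. Write $T=\omega_0-\varepsilon\chi_Y+\sqrt{-1}\partial\db\varphi$ and set $\phi=\varphi-\varepsilon\log|s|_h^2$. Then
\[
\omega_0+\sqrt{-1}\partial\db\phi=T+\varepsilon\chi_Y-\varepsilon[Y].
\]
On $M\backslash Y$ this form equals $T+\varepsilon\chi_Y$, which satisfies the strict cone condition with a margin of $\varepsilon\chi_Y$, by Proposition~\ref{PRTmatrix}. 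Since $T\ge\beta[Y]$, the Lelong number of $\varphi$ along $Y$ is at least $\beta$, so $\phi\le(\beta-\varepsilon)\log|s|_h^2+C$. Hence $\phi\to-\infty$ at $Y$ at a definite logarithmic rate, because $\beta>\varepsilon$.

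\textbf{Step 2 (regularization).} Apply Demailly's regularization with analytic singularities to $\varphi$, or the Blocki--Kolodziej variant done chart by chart with the mollifications of Definition~\ref{DefnDegenerateConeCondition}. This gives smooth functions $\phi_k$ on $M$ minus an analytic set $Z_k\supseteq Y$, decreasing to $\phi$. By Corollary~\ref{convexmatrix}, mollification preserves the cone condition on constant-coefficient charts, and the error from varying $\chi$ and from the gluing terms is $O(\varepsilon\text{-small})\,\chi$ times the Lelong-number bound. The smallness $\varepsilon<\beta/(1000c_n)$ will be used so that this error is absorbed by the margin $\varepsilon\chi_Y$ while the singularity along $Y$ keeps a positive coefficient. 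The outcome should be that $\omega_0+\sqrt{-1}\partial\db\phi_k$ satisfies the cone condition on $M\backslash Z_k$, with $\phi_k\to-\infty$ along $Z_k$ and, uniformly, along $Y$.

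\textbf{Step 3 (gluing).} For the analytic set $Z=Z_k$, the hypothesis gives a neighborhood $U$ and a potential $\psi_U$ with $\omega_0+\sqrt{-1}\partial\db\psi_U$ satisfying the cone condition on $U$. Shrink to $U'\Subset U$ with $Z\subset U'$ and choose a constant $C\gg1$ so that $\phi_k>\psi_U-C$ on $U\backslash U'$. This is possible because $\psi_U$ is bounded there and $\phi_k$ is bounded below on compact subsets of $M\backslash Z$. Near $Z$ we have $\psi_U-C>\phi_k$, since $\phi_k\to-\infty$. Define
\[
\hat\psi=\widetilde{\max}(\phi_k,\psi_U-C)\ \text{on } U,\qquad \hat\psi=\phi_k\ \text{on } M\backslash U.
\]
Then $\hat\psi$ is smooth, and by the convexity and PRT remark above, $\hat\omega=\omega_0+\sqrt{-1}\partial\db\hat\psi$ satisfies the cone condition everywhere.

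I expect the main obstacle to be Step 2: regularizing a singular current while keeping the fully nonlinear, non-convex-in-$\varphi$ cone condition, with the loss quantified by Lelong numbers and dominated by $\varepsilon\chi_Y$. I would first try a local mollification in charts, patched with a partition of unity and Demailly's gluing. The error terms there are controlled by $\sup_U|\chi-\chi_0|$ and by $\varepsilon$. The dimensional constant $c_n$ should come from exactly this patching estimate.
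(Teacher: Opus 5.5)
Your Steps~1 and~3 follow the architecture of the argument the paper imports from \cite{Datar2020ANC}, which is modelled on \cite{GaoChen2021}. Put the current in $[\omega_0]$ with a margin on $M\backslash Y$. Produce a cone-condition potential that is smooth off an analytic set and very negative near it. Then take a regularized maximum with $\psi_U-C$, using Corollary~\ref{convexmatrix} and Proposition~\ref{PRTmatrix}. (Step~1 is unnecessary: $T+\varepsilon\chi_Y=\omega_0+\sqrt{-1}\partial\db\varphi$ already lies in $[\omega_0]$. Your $\phi=\varphi-\varepsilon\log|s|_h^2$ actually gives $T+2\varepsilon\chi_Y-\varepsilon[Y]$, which is harmless.)

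However, Step~2 is the whole content of the proposition, and as written it does not work. Demailly's regularization with analytic singularities (Bergman-kernel approximation) controls only a lower bound on the Hessian. Membership in $\Upsilon^1$ is in general not a lower-bound condition, so nothing guarantees that $\omega_0+\sqrt{-1}\partial\db\phi_k$ lies in the cone anywhere. What does preserve the cone is averaging: convolution in a chart, measured against a constant-coefficient $\chi_0\le\chi$, exactly as in the proof of Theorem~\ref{MassConcentration}. But convolution produces smooth functions, not functions with logarithmic poles along an analytic set $Z_k$. So neither tool delivers, on its own, both properties Step~3 requires of $\phi_k$: the cone condition off $Z_k$, and $\phi_k\to-\infty$ along $Z_k$.

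If you commit to chart-wise convolution at scale $\delta$, there are two problems you do not address. First, $T$ satisfies the cone condition only on $M\backslash Y$. At points within $\delta$ of $Y$ the mollification averages across $Y$ and need not lie in the cone; for the current $\Xi=\Theta-\frac{\beta_Y}{2}[Y]+\cdots$ to which the theorem is applied, the term $-\frac{\beta_Y}{2}[Y]_\delta$ is large there. You must instead show that $\psi_U-C$ wins on that set. This means comparing $\sup\phi_\delta\le\beta\log\delta+O(1)$ near $Y$ with $\inf\phi_\delta$ on the transition region. That comparison fails if the transition region contains points of Lelong number $\ge\beta$, so one has to cut out an upper level set $E_c(T)$ of the Lelong numbers (analytic by Siu). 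This is why the hypothesis is imposed for every analytic $Z$.

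Second, patching the local convolutions by a regularized maximum requires their differences on overlaps to be small compared with the bump corrections, whose negative Hessians must be absorbed by the margin. Near a point of Lelong number $\nu$, two convolutions of the same potential at the same scale in different charts differ by an amount of order $\nu$, which does not tend to zero. This already happens for $\nu\log|z|^2$ under a non-unitary linear change of coordinates. You assert that this error is absorbed by $\varepsilon\chi_Y$ thanks to $\varepsilon<\beta/(1000c_n)$. But that hypothesis makes the margin small compared with the Lelong number $\beta$ along $Y$, so it cannot be what absorbs an error proportional to Lelong numbers. The error has to be made small by other means, for instance by taking the cut-off level $c$ small relative to the margin, so that $Y\subset E_c(T)$ still holds.

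A Kiselman--Legendre cut-off over scales, as in Demailly's theorem, does not rescue this automatically either. The infimum replaces the Hessian by a Schur complement, which is smaller, and a PRT cone is not stable under subtracting positive semidefinite matrices. Until Step~2 is replaced by a quantitative, cone-preserving regularization of this kind, the proposal states what is needed rather than proving it.
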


        \begin{rmk}
            Here the strict cone condition means that the inequality in Definition~\ref{DefnDegenerateConeCondition} is strict.
        \end{rmk}

        The proof in \cite{Datar2020ANC} assumes that $[\omega_0]$ is a K\"ahler class, whereas here we know only that it is nef. Thus, we must replace the term $\varepsilon\omega_0$ in their proof by the K\"ahler form $\varepsilon\chi_Y$. The rest of the proof applies without essential changes because it uses only the convexity of the cone condition and the fact that the cone condition implies the K\"ahler condition, allowing pluripotential theory to be applied.

        Assuming that the extension theorem required by Theorem~\ref{GluingThm} holds, we prove Theorem~\ref{MainThm1} along path~(\ref{Path2}).
        Recall that we use the continuity set
        \[
        I=\left\{t \ge 0: \eqref{Path2} \text{ has a smooth solution}
        \right\}.
        \]
        It remains to prove closedness. Let $t_0=\inf I$; we may assume that $t_0=0$ and need only prove that $0\in I$.

        By Theorem~\ref{MassConcentration}, there exist $\beta_Y>0$ and a current $\Theta\in[\omega_0]$ such that $\Theta\geq\beta_Y[Y]$. Moreover, $\Theta$ satisfies the cone condition $\bar\Upsilon^1$.

        Consider the form $\Xi = \Theta - \frac{\beta_{Y}}{2}[Y] +  (\frac{\beta_{Y}}{2}-\epsilon_4)\chi_{Y}\in[\omega_0-\epsilon_4\chi_{Y}]$ for $\epsilon_4<\frac{\beta_{Y}}{2}$. By Lemma~\ref{strictPRT} and Proposition~\ref{PRTmatrix}, $\Xi$ satisfies the strict cone condition on $M\backslash Y$, and
        $\Xi \geq \frac{\beta_Y}{2} [Y]$.
        Moreover, we can choose $\epsilon_4$ smaller such that $\epsilon_4 < \frac{\beta}{1000c_n}$.
        
        This completes the proof, assuming the existence of such $U$ and $\omega_U$, which is proved in the next section.
       
   \section{Extension theorems}

        First, we consider the smooth case:
        \begin{lem}
            Let $Z$ be a smooth $m$-dimensional subvariety of $M$. Suppose there exists a K\"ahler metric $\omega_Z = \omega_0 |_Z + \sqrt{-1}\partial\db\psi_Z$
            on $Z$ such that $\omega_Z, \chi_Z$ is in the $\Upsilon$ cone of 
            \(
            x^m - \sum_{j=0}^{m-2}c_{n-m+j}\binom{m}{j}x^j.\)
            
            Then there exists a neighborhood $U$ of $Z$ and a smooth function $\psi_U:U\to\bR$ such that the smooth form $\omega_U = \omega_0+\sqrt{-1}\partial\db\psi_U$
            is K\"{a}hler and is in the $\Upsilon^1$ cone of
            \(
            x^{n} - \sum_{k=0}^{n-2}c_k\binom{n}{k}x^k.\)
        \label{SmoothExtention}
        \end{lem}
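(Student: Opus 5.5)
We follow the standard extension construction of \cite{GaoChen2021} and \cite{Datar2020ANC}: extend $\psi_Z$ to a tubular neighbourhood and add a large multiple of the squared distance to $Z$. First, choose a smooth extension $\tilde\psi$ of $\psi_Z$ to a neighbourhood of $Z$. One way is to use a tubular neighbourhood $\pi:U_0\to Z$ and set $\tilde\psi=\psi_Z\circ\pi$. Let $\rho$ be a smooth function on $U_0$ comparable to $d_\chi(\cdot,Z)^2$, for instance $\rho=\sum_j\theta_j|s_j|^2$ built from local defining functions of $Z$. Then $\sqrt{-1}\partial\db\rho$ restricted to the normal directions at points of $Z$ is positive definite and vanishes on $T Z$. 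Set $\psi_U=\tilde\psi+A\rho$ with $A\gg1$, and shrink $U$ to $\{\rho<\delta\}$ with $\delta\ll A^{-1}$.

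The key pointwise step is at a point $z\in Z$. Pick $\chi$-unitary coordinates with $T_zZ=\mathrm{span}(\partial_1,\dots,\partial_m)$. The matrix of $\omega_U$ relative to $\chi$ has the block form
\[
\begin{pmatrix} \omega_Z & B\\ B^* & D+A P\end{pmatrix},
\]
where $P>0$ and $B,D$ are bounded independently of $A$. As $A\to\infty$, the eigenvalues of $\chi^{-1}\omega_U$ split. The first $m$ converge to $\lambda[\chi_Z^{-1}\omega_Z]=:(x_1,\dots,x_m)$, with error $O(A^{-1})$ from the Schur complement. The remaining $n-m$ eigenvalues are $\ge cA$. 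Now apply Lemma~\ref{extensionlemma} to $F=\Pol_n(x^n-\sum_{k\le n-2}c_k\binom nk x^k)$, noting that the corresponding $G=\partial_{m+1}\cdots\partial_nF$ is exactly the polarization (up to the factor $m!/n!$ normalization) of $x^m-\sum_{j=0}^{m-2}c_{n-m+j}\binom mj x^j$. By hypothesis $(x_1,\dots,x_m)\in\Upsilon_G$, and by compactness of $Z$ we have uniform upper bounds on $|x_i|$ and uniform positive lower bounds on all $\partial_IG$. So there is $C$ with $(x_1,\dots,x_m,C,\dots,C)\in\Upsilon_F$. By PRT, $(x_1,\dots,x_m,y_{m+1},\dots,y_n)\in\Upsilon_F$ for all $y_j\ge C$. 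Since $\Upsilon_F$ is open and the data are uniform on $Z$, it is also stable under the $O(A^{-1})$ perturbation of the first block, using permutation invariance and the fact that eigenvalues are unordered. Hence $\lambda[\chi^{-1}\omega_U](z)\in\Upsilon_F\subseteq\Upsilon^1_F$ for $A$ large. Because $c_{n-1}=0$, Lemma~\ref{positivity} gives that $\omega_U$ is K\"ahler.

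Finally, openness of the cone condition and continuity of $\omega_U$ extend the conclusion from $Z$ to a small neighbourhood $U=\{\rho<\delta\}$. Compactness of $Z$ makes $\delta$ uniform.

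The main obstacle is the uniform eigenvalue-splitting estimate near $Z$, which needs the $O(A^{-1})$ Schur complement control. One point needs care: off $Z$, $\sqrt{-1}\partial\db\rho$ may acquire tangential negative parts of size $O(\sqrt\rho)$. Multiplied by $A$, these are $O(A\sqrt\delta)$, so we must choose $\delta\ll A^{-2}$ after fixing $A$. That is harmless, since we only need some neighbourhood.
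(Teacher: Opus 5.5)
Your proposal is correct and follows essentially the same route as the paper. You extend $\psi_Z$ off $Z$, add a large multiple of a function vanishing to second order along $Z$ (the paper uses $10C\,d_\chi(\cdot,Z)^2$ and a partition of unity built from coordinate projections), verify the cone condition at points of $Z$ via Lemma~\ref{extensionlemma}, and then pass to a neighbourhood by openness and compactness. The only difference is in how the mixed tangential--normal block is handled: you use Schur-complement eigenvalue splitting together with openness of $\Upsilon_F$, whereas the paper writes the direct lower bound $\omega_U\ge\sum_{i\le m}(\lambda_i-\frac1C)\sqrt{-1}\,dz_i\wedge d\bar z_i+C\sum_{i>m}\sqrt{-1}\,dz_i\wedge d\bar z_i$ and implicitly uses the positive ray property, as in Proposition~\ref{PRTmatrix}. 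Your version makes explicit that the normal coefficient must dominate $\|B\|^2$ divided by the allowed tangential error, which the paper's single constant $C$ leaves implicit.
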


        \begin{proof}
            %The proof may use the $\Upsilon_k$-cone.

            Let $(U_{\alpha},\{z_{\alpha}^i\})$ be finitely many coordinate charts in $M$ that cover $Z$, with
            $$
            Z\cap U_{\alpha}=\{z_{\alpha}^{m+1}=\cdots=z_{\alpha}^{n}=0\}.$$
            Let $\rho_{\alpha}(z_{\alpha}^{1} , \cdots, z_{\alpha}^{m})$ be a partition of unity on $Z$ subordinate to $U_{\alpha}\cap Z$.
            Let $\pi^{\alpha}$ be the projection to the first $m$ coordinates.
            Let $\eta_{\alpha}(z_{\alpha}^{m+1} , \cdots)$ be a smooth function with compact support in the vertical part of $U_{\alpha}$
             such that it is identically 1 in a neighbourhood of the origin.
            Then 
            \[
            \tilde{\rho}_{\alpha}(z_{\alpha})=
            \frac{\eta_{\alpha}(z_{\alpha}^{m+1},\ldots,z_{\alpha}^{n})
            (\pi^{\alpha})^*\rho_{\alpha}}
            {\sum_\beta\eta_{\beta}(z_{\beta}^{m+1},\ldots,z_{\beta}^{n})
            (\pi^{\beta})^*\rho_{\beta}}
            \]
            is a partition of unity on a closed subset of $\cup_{\alpha}U_{\alpha}$. 
            
            We  consider the form
            \begin{align*}
                \tilde{\omega}_U &= \omega_0+ \sqrt{-1}\partial\db \Big( \sum_{\alpha}\tilde{\rho}_{\alpha}(\pi^{\alpha})^*\psi_Z \Big)\\
                \omega_U &= \tilde{\omega}_U + 10C\sqrt{-1}\partial\db d_{\chi}(\cdot,Z)^2
            \end{align*}
            where $C$ is a sufficiently large constant. The squared distance $d_{\chi}(\cdot,Z)^2$ is smooth on a sufficiently small tubular neighborhood of $Z$. On such a neighborhood $U$, the form $\omega_U$ is K\"ahler, and by continuity it is enough to verify the cone condition on $Z$. Note that $\omega_U|_Z=\omega_Z$. If we choose $C$ sufficiently large to control the $dz^i\wedge d\bar z^j$ terms of $\tilde\omega_U$ for $i>m$ or $j>m$, then at a point $p\in Z$, after diagonalizing \[\omega_Z=\sum_{i=1}^{m} \lambda_i \sqrt{-1} dz_i\wedge d\bar z_i, \quad \chi=\sum_{i=1}^{n} \sqrt{-1} dz_i\wedge d\bar z_i,\] we have \[
            \omega_U \ge \sum_{i=1}^{m} (\lambda_i-\frac{1}{C}) \sqrt{-1} dz_i\wedge d\bar z_i+C\sum_{i=m+1}^{n} \sqrt{-1} dz_i\wedge d\bar z_i.
            \]

            Since $Z$ is compact, and $\omega_Z$ is in the open $\Upsilon$ cone, we can use Lemma~\ref{extensionlemma} to finish the proof.

        \end{proof}

        Now we provide a proof assuming that $Z$ is smooth.

             \begin{thm}
            Let $Z\subset M$ be a smooth subvariety of dimension $m<n$.
            Under condition~(3) of Theorem~\ref{MainThm1}, there exists a K\"{a}hler metric
            $\omega_Z = \omega_0|_Z+\sqrt{-1}\partial\db\psi_Z$ on $Z$  such that
            $(\omega_Z,\chi_Z)$ lies in the $\Upsilon$-cone associated with
            \(
            x^m - \sum_{j=0}^{m-2}c_{n-m+j}\binom{m}{j}x^j.\)
        \label{InductionSmooth}
        \end{thm}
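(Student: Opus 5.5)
The plan is to apply the induction hypothesis on dimension, set up in the continuity-path section, to the manifold $Z$ itself. $Z$ is a smooth projective manifold of dimension $m<n$. We equip it with the Kähler form $\chi_Z=\chi|_Z$ and the class $[\omega_0|_Z]$, and consider the degree-$m$ polynomial
\[
f_Z(x)=x^m-\sum_{j=0}^{m-2}c_{n-m+j}\binom{m}{j}x^j .
\]
The argument then has three parts: identify $f_Z$ with a rescaled derivative of $f$, transfer the numerical hypothesis from $M$ to $Z$, and handle the integrability condition.

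\textbf{Step 1: $f_Z$ is strongly strictly right-Noetherian.} A direct computation should give
\[
f^{(n-m)}(x)=\frac{n!}{m!}\,f_Z(x),
\]
using the reduction $c_{n-1}=0$, which removes the $x^{m-1}$ term. The root sequence of $f_Z$ is therefore the tail $\bigl(r(f^{(n-m)}),\ldots,r(f^{(n-1)})\bigr)$ of the root sequence of $f$. Since $f$ is strongly strictly right-Noetherian, this tail is strictly decreasing. This is precisely why the strong hypothesis is needed: strict right-Noetherianity of $f$ alone would not pass to derivatives.

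\textbf{Step 2: transfer of condition (3).} Let $W\subset Z$ be a subvariety of dimension $q<m$. Then $W$ is also a subvariety of $M$ of dimension $q$. Hypothesis (3) of Theorem~\ref{MainThm1} applied to $W\subset M$ involves the coefficients $c_{k+n-q}$. Condition (3) for the triple $(Z,f_Z,W)$ involves the coefficients $c_{n-m+(j+m-q)}=c_{j+n-q}$, so the two expressions coincide. Hence condition (3) holds on $Z$ for all proper subvarieties.

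\textbf{Step 3: the integrability condition, which is the main obstacle.} Theorem~\ref{MainThm1} on $Z$ requires equality
\[
\int_Z\omega_0^m=\sum_{j=0}^{m-2}c_{n-m+j}\binom{m}{j}\int_Z\omega_0^j\wedge\chi^{m-j},
\]
whereas applying (3) to $V=Z$ gives only strict inequality ``$>$''. I would handle this as in the continuity-path section. Replace the constant term $c_{n-m}$ of $f_Z$ by a larger constant $\tilde c$ chosen so that equality holds. Because (3) on $Z$ gives strict positivity, $\tilde c>c_{n-m}$. The argument given there for $\tilde c_0(t)>c_0$ then shows the modified polynomial is still strongly strictly right-Noetherian: its largest root increases while all positive-order derivatives are unchanged. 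Condition (3) for proper subvarieties of $Z$ does not involve the constant term when $q<m$, so it remains true.

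The remaining normalization $c_{m-1}=0$ holds automatically, and the constant-$c_0$ reduction is unnecessary. The induction hypothesis in its (3)$\Rightarrow$(2) form, or Theorem~\ref{MainThm1} directly via Lin's result, then yields $\omega_Z=\omega_0|_Z+\sqrt{-1}\partial\bar\partial\psi_Z$ in the $\Upsilon^1$ cone of the modified polynomial $\tilde f_Z$.

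\textbf{Step 4: from the $\Upsilon^1$ cone of $\tilde f_Z$ to the $\Upsilon$ cone of $f_Z$.} The cone $\Upsilon^1$ depends only on first derivatives, so it agrees for $\tilde f_Z$ and $f_Z$. Moreover $\tilde f_Z=f_Z-(\tilde c-c_{n-m})$ with a positive constant subtracted, so $\omega_Z\in\Upsilon_{\tilde F_Z}\subseteq\Upsilon_{F_Z}$. This inclusion follows from Theorem~\ref{UpsilonDominance}, or directly since $\tilde F_Z<F_Z$ pointwise. Membership in $\Upsilon_{\tilde F_Z}$ itself comes from Lemma~2.5 of \cite{LIN2023110038}, as in the proof of Theorem~\ref{MainThm2}, because $\omega_Z$ solves the equation with $\tilde F_Z=0$ and lies in $\Upsilon^1$ (alternatively, Lin's solution lies in $\bar\Upsilon_{\tilde F_Z}$). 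Finally, Lemma~\ref{positivity} with $p=1$ gives that $\omega_Z$ is Kähler.
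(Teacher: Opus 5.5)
Your proposal follows the paper's proof step for step. Both arguments:
\begin{itemize}
\item obtain $c_Z>0$ by applying condition~(3) to $V=Z$;
\item absorb $c_Z$ into the constant term to get integrability;
\item check strong strict right-Noetherianity and the numerical condition on proper subvarieties of $Z$;
\item solve via the inductive hypothesis together with Lin's Theorem~1.4;
\item finish with Lemma~2.5 of \cite{LIN2023110038}.
\end{itemize}
Your Steps~1 and~2 helpfully make explicit the identity $f^{(n-m)}=\frac{n!}{m!}f_Z$ and the index matching $c_{n-m+(j+m-q)}=c_{j+n-q}$, which the paper leaves implicit.

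There is one slip in Step~4. The solution satisfies $\tilde F_Z(\lambda)=0$, where $\lambda=\lambda[\chi_Z^{-1}\omega_Z]$. So $\lambda$ does \emph{not} lie in the open set $\Upsilon_{\tilde F_Z}\subseteq\{\tilde F_Z>0\}$, and Lemma~2.5 cannot be applied to $\tilde F_Z$. This differs from the proof of Theorem~\ref{MainThm2}, where the solution is placed in $\Upsilon_{h_{T,n-d}}$, the cone of the \emph{unshifted} polynomial, on which its value is $c>0$.

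The fix is to apply Lemma~2.5 directly to $F_Z$:
\begin{itemize}
\item $F_Z-\tilde F_Z$ is a positive constant, so $F_Z(\lambda)>0$;
\item $\lambda\in\Upsilon^1_{\tilde F_Z}=\Upsilon^1_{F_Z}$;
\item hence $\lambda\in\Upsilon_{F_Z}$.
\end{itemize}
This is exactly the strict inequality displayed in the paper's proof, and it makes the detour through Theorem~\ref{UpsilonDominance} unnecessary.

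Your parenthetical alternative via $\bar\Upsilon_{\tilde F_Z}\subseteq\bar\Upsilon_{F_Z}$ can also be completed. It needs one extra observation: a point of $\bar\Upsilon_{F_Z}$ at which $F_Z>0$ lies in $\Upsilon_{F_Z}$, because a small ball around it lies in $\{F_Z>0\}$ and meets the component $\Upsilon_{F_Z}$.
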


        \begin{proof}
            Let $c_Z$ be the constant determined by
            $$
            c_Z\int_Z\chi^m = \int_Z \omega_0^m - \sum_{j=0}^{m-2}c_{n-m+j}{m\choose j}\int_Z \omega_0^j\wedge\chi^{m-j}.$$
            The integral condition implies that $c_Z>0$.
            Consider the equation
            $$
            \omega^m = \sum_{j=0}^{m-2}c_{n-m+j}{m\choose j} \omega^j\wedge\chi^{m-j} + c_Z\chi^m$$
            for $\omega\in [\omega_0|_Z]$.
            By assumption, $c_Z>0$, which implies the strong strict $\Upsilon$-stability of the equation on $Z$.
            The integral condition also gives, for every subvariety $V\subset Z$ of dimension $k$,
            $$
            \int_V  \omega_0^k - \sum_{j=0}^{k-2}c_{j+n-k}{k\choose j}\int_V \omega_0^j\wedge\chi^{k-j}>0.$$
            By induction, there exists a smooth K\"{a}hler metric $\omega_Z \in [\omega_0|_Z]$ on $Z$ such that
            $$
            \omega_Z^m = \sum_{j=0}^{m-2}c_{n-m+j}{m\choose j}\omega_Z^j\wedge\chi^{m-j} + c_Z\chi^m>\sum_{j=0}^{m-2}c_{n-m+j}{m\choose j}\omega_Z^j\wedge\chi^{m-j},$$
            and $\omega_Z$ is in the $\Upsilon^1$ cone.
            By Lemma~2.5 of \cite{LIN2023110038}, $\omega_Z$ is also in the $\Upsilon$ cone.
        \end{proof}

   We now prove the singular case.

        \begin{lem}
            Let $Z\subset M$ be a subvariety. There exists a neighborhood $U$ of $Z$ and a K\"ahler metric $\omega_U = \omega_0+\sqrt{-1}\partial\db\psi_U$ satisfying the cone condition.
            \label{SingularExtention}
        \end{lem}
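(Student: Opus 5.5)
The plan is to reduce the singular case to the smooth case (Lemma~\ref{SmoothExtention} together with Theorem~\ref{InductionSmooth}) by resolution of singularities. Then the result on the resolution is pushed back to $M$ by an induction on $\dim Z$ that handles the singular locus first.

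\textbf{Step 1 (setup of the induction).} We induct on $m=\dim Z$. The case $m=0$ is trivial: near finitely many points, $\omega_0+C\sqrt{-1}\partial\db|z|^2$ in local coordinates satisfies the cone condition for $C$ large, by Lemma~\ref{extensionlemma} with $m=0$. We may also treat each irreducible component separately and glue at the end, using the regularized maximum trick described below.

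\textbf{Step 2 (the singular locus).} Let $Z_{\mathrm{sing}}$ be the singular locus, which has dimension $<m$. By induction there is a neighborhood $U_1$ of $Z_{\mathrm{sing}}$ and a potential $\psi_1$ with $\omega_0+\sqrt{-1}\partial\db\psi_1$ in the cone $\Upsilon^1$ on $U_1$.

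\textbf{Step 3 (the rest of $Z$).} Take an embedded resolution $\mu:\tilde Z\to Z$, which is an isomorphism over $Z_{\mathrm{reg}}$. Condition~(3) pulls back to $\tilde Z$ only for $\mu^*\omega_0$, which is merely nef and big there, so Theorem~\ref{InductionSmooth} cannot be applied verbatim. Instead I would work on $\tilde Z$ with the perturbed class $\mu^*\omega_0-\delta[E]$, where $E$ is an exceptional divisor that is relatively anti-ample. For $\delta$ small, its numerical positivity on subvarieties not contained in $E$ is preserved by openness of the inequalities in condition~(3). Alternatively, mimicking Section~7, I would produce a current on $Z$ that is smooth and in the strict cone on $Z_{\mathrm{reg}}$ away from a small neighborhood of $Z_{\mathrm{sing}}$, possibly with analytic singularities along $Z_{\mathrm{sing}}$ (of type $\delta\log|s|^2$). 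Either way, the outcome is a potential $\psi_Z$ on $Z\setminus U_1'$, where $U_1'\Subset U_1$, with $\omega_0|_Z+\sqrt{-1}\partial\db\psi_Z$ in the $\Upsilon$-cone of the $m$-dimensional polynomial. The extension argument of Lemma~\ref{SmoothExtention}, which is local near smooth points, then yields a potential $\psi_2$ on a neighborhood $U_2$ of $Z\setminus U_1'$ in $M$. On $U_2$ this potential tends to $-\infty$ near $Z_{\mathrm{sing}}$ if logarithmic singularities were used.

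\textbf{Step 4 (gluing).} Set $\psi_U=\widetilde{\max}(\psi_1,\psi_2-A)$, using Demailly's regularized maximum and adjusting constants. The aim is that $\psi_1$ dominates near $Z_{\mathrm{sing}}$, because $\psi_2\to-\infty$ there, while $\psi_2-A$ dominates near $\partial U_1\cap Z$. The regularized maximum of two forms in the convex cone $\Upsilon^1$ remains in it, by Corollary~\ref{convexmatrix} and the standard computation of Datar--Pingali: its complex Hessian is a convex combination of the two Hessians plus a positive semidefinite term, so Proposition~\ref{PRTmatrix} applies. Shrinking to a neighborhood $U$ of $Z$ finishes the argument.

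\textbf{Main obstacle.} I expect Step~3 to be the hardest: obtaining a cone-condition metric on $Z_{\mathrm{reg}}$ with controlled singular behavior at $Z_{\mathrm{sing}}$. This requires running the induction hypothesis (the full Theorem~\ref{MainThm1} in lower dimension) on the resolution $\tilde Z$, where the class is not positive along exceptional divisors. One must therefore check that the strict numerical inequalities survive the $-\delta E$ perturbation uniformly over all subvarieties of $\tilde Z$. My first attempt would be to use the projectivity of $\tilde Z$ and an openness argument based on the uniform version (Theorem~\ref{MainThm2}) of the criterion.
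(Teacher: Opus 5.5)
Your architecture (resolve, twist by the exceptional divisor, build a current with analytic singularities, glue by regularized maximum against the lower-dimensional induction) is broadly the paper's. However, Step~3, which you flag as the main obstacle, is where the proof actually lives, and the mechanism you propose for it does not work. Condition~(3) gives only strict, non-uniform inequalities over infinitely many subvarieties, so there is no openness to exploit. Theorem~\ref{MainThm2} is also unavailable, because its hypothesis is precisely the uniform $\epsilon_0$-bound you lack. Worse, the pure twist $\mu^*\omega_0-\delta[E]$ pushes in the wrong direction. The first-order change of $\int_{\tilde V}\Phi_p$ is $-\delta p\int_{\tilde V}\Phi_{p-1}(\mu^*\omega_0,\mu^*\chi)\wedge[E]$, and this is $\le 0$ by the projection formula and condition~(3) applied to $\mu_*(\tilde V\cdot E)$. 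So for $\tilde V\not\subset E$ meeting $E$ with a small main term, the inequality can break. For $\tilde V\subset E$ with $\dim\mu(\tilde V)<\dim\tilde V$, the unperturbed integral vanishes identically. All positivity must then come from mixed terms carrying the coefficients $c_k$, whose signs you do not control. Finally, $\mu^*\chi$ degenerates along $E$, so you need a K\"ahler background on $\tilde Z$ before the induction (Theorem~\ref{MainThm1} in dimension $m$) can be invoked.

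The paper's fix has two ingredients with no counterpart in your plan. First, it twists by a genuinely K\"ahler form $\tilde\rho=\pi^*\chi_Y+C_4^{-1}\sqrt{-1}\partial\db\log h$, taking $\tilde\omega_s=(1+s)\pi^*\omega_0+s\tilde\rho$ and $\tilde\chi_s=\pi^*\chi+s^n\tilde\rho$. It controls every mixed term by applying the induction hypothesis of the lemma itself to $V=\pi(\tilde V)$, which has dimension $<m$. The resulting cone-condition form $\omega_V$ near $V$ makes $\Phi_i((1+s)\omega_V,\chi)$ strictly positive by Lemma~\ref{positivity}, hence $\int_{\tilde V}\Phi_i((1+s)\pi^*\omega_0,\pi^*\chi)\wedge\tilde\rho^{p-i}\ge0$ for every $i$. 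The $s$-expansion then has leading term $s^p\int_{\tilde V}\tilde\rho^p$ and error at most $K_ps^n\int_{\tilde V}\tilde\rho^p$, with $K_p$ independent of $\tilde V$. This is the uniformity you were missing, and it covers $\tilde V\subset\tilde E$ as well; the choice of $s^n$ versus $s$ is what makes the background-metric errors lower order. Second, since $\tilde\rho$ contains the ample piece $\pi^*\chi_Y$, the class $s\pi^*[\chi_Y]$ must be removed to return to $(1+s)\pi^*[\omega_0]$. This is done by mass concentration along $\tilde Y=\pi^{-1}(Y)\cap\tilde Z$ (Lemma~\ref{SecondConcentrate}) with $\beta_{\tilde Y}$ independent of $s$. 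The final current therefore has analytic singularities along $\tilde Y\cup\hat E$, and the gluing uses the induction-supplied form near $\pi(\tilde Y)\cup E$, not only near $Z_{\mathrm{sing}}$. Your ``mimic Section~7'' alternative would need exactly these steps; as written, it assumes the strict-cone current it is meant to produce.
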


        %\begin{rmk}
        %    We prove this lemma by induction on the dim of $M$.
        %    That is to say, assume that for any lower dimension subvarieties which satisfy the intergral condition, 
        %    then there exists the neighborhood.
        %    And now we want to prove the higher dimension case.
        %\end{rmk}

        \begin{proof}

            Without loss of generality, we can assume that $Z$ is connected.
            We induct on the maximal dimension $m<n$ of the irreducible components of $Z$. For $m=0$, $Z$ is a point, and the lemma is immediate.
            Assume it is true for $0,1,\cdots,m-1$ and removing the isolated points of $Z$,
            we may assume that the dimension of  every irreducible component of $Z$ is $\geq 1$.

            \begin{itemize}
                \item If $Z$ is smooth, then we are done by Theorem \ref{InductionSmooth} and Lemma~\ref{SmoothExtention}.
                
                \item If $Z$ is singular, we use a canonical resolution of singularities.
                There exists a morphism $\pi\colon\tilde{M}_r\to\tilde{M}_{r-1}\to\cdots\to\tilde{M}_0=M$ obtained by blowups
                along smooth centres such that the proper transform $\tilde{Z}_r$ of $Z$ is smooth. We only need to assume that $r=1$.
                So $\pi: \tilde{M} \to M$, $\tilde{Z}$ the proper transform of $Z$.
                Let $\tilde{E}$ be the exceptional divisor.

                If $h$ is a Hermitian metric on $[\tilde{E}]$,
                then for some $C_4\gg1$,
                $\tilde{\rho} = \pi^*\chi_Y + C_4^{-1}\sqrt{-1}\partial\db\log h$ is a K\"{a}hler metric on $\tilde{M}$.
                We define
                $$
                    \begin{cases}
                        \tilde{\omega}_{s} = (1+s)\pi^*\omega_0 + s\tilde{\rho}  \\
                        \tilde{\chi}_{s} = \pi^*\chi + s^{n}\tilde{\rho} 
                    \end{cases}
                $$

                We next verify the numerical condition on $\tilde Z$.  The
                organization of the following blow-up expansion is the same as
                the cohomological device used in the proof of
                Lemma~4.5 of~\cite{Datar2020ANC}.

                Let $\tilde V\subseteq\tilde Z$ be an irreducible
                subvariety of dimension $p$. We claim that, for all sufficiently
                small $s>0$,
                \begin{equation}
                \mathcal I_{\tilde V}(s):=
                \int_{\tilde V}\left(
                \tilde\omega_s^p-
                \sum_{j=0}^{p-2}c_{j+n-p}\binom pj
                \tilde\omega_s^j\wedge\tilde\chi_s^{p-j}
                \right)>0.
                \label{Equation2}
                \end{equation}
                More importantly, the upper bound on $s$ can be chosen
                independently of $\tilde V$. If $\tilde V=\tilde Z$, we get the inequality as long as $s$ is small. The case $p=0$ is also immediate, so we assume $1\leq p\leq m-1$.

                Put $V=\pi(\tilde V)$.  Since $\dim V\leq p<m$, the induction
                hypothesis gives a neighborhood $U_V$ of $V$ and a K\"ahler form
                \[
                \omega_V=\omega_0+\sqrt{-1}\partial\bar\partial\varphi_V
                \quad\text{on }U_V
                \]
                satisfying the cone condition.  The positive-ray property of the
                cone shows that $(1+s)\omega_V$ also satisfies the cone condition.
                Hence Lemma~\ref{positivity} gives, for $2\leq i\leq p$,
                \begin{equation}
                \Phi_i\big((1+s)\omega_V,\chi\big)>0,
                \qquad
                \Phi_i(\alpha,\beta):=
                \alpha^i-
                \sum_{a=0}^{i-2}c_{a+n-i}\binom ia
                \alpha^a\wedge\beta^{i-a}.
                \label{Equation3}
                \end{equation}
                We also set $\Phi_0=1$ and $\Phi_1(\alpha,\beta)=\alpha$.

                Set
                \(A_s=(1+s)\pi^*\omega_0\).
                Since integration of closed forms over the cycle $[\tilde V]$
                depends only on their cohomology classes, for every $0\leq i\leq p$
                we have
                \begin{align}
                &\int_{\tilde V}\Phi_i(A_s,\pi^*\chi)\wedge\tilde\rho^{p-i}=
                \int_{\tilde V}
                \Phi_i\big((1+s)\pi^*\omega_V,\pi^*\chi\big)
                \wedge\tilde\rho^{p-i}\geq0
                \label{CohomologicalReplacement}
                \end{align}
                by pulling back~\eqref{Equation3} and wedging with the positive form
                $\tilde\rho^{p-i}$.

                We now expand
                \[
                \tilde\omega_s=A_s+s\tilde\rho,
                \qquad
                \tilde\chi_s=\pi^*\chi+s^n\tilde\rho.
                \]
                In a monomial from
                $(A_s+s\tilde\rho)^j(\pi^*\chi+s^n\tilde\rho)^{p-j}$, choose $a$ factors of
                $A_s$ and $b$ factors of $\pi^*\chi$.  The monomial is then
                \[
                \binom ja\binom{p-j}{b}
                s^{j-a+n(p-j-b)}
                A_s^a\wedge \pi^*\chi^b\wedge\tilde\rho^{p-a-b}.
                \]
                The terms with no $s^n\tilde\rho$ factor are precisely those for which
                $b=p-j$.  Grouping these terms according to
                $i=a+b=a+p-j$ and using
                \[
                \binom pj\binom ja
                =\binom pi\binom ia,
                \qquad j=p-i+a,
                \]
                gives the exact identity
                \begin{equation}
                \mathcal I_{\tilde V}(s)
                =\sum_{i=0}^{p}\binom pi s^{p-i}
                \int_{\tilde V}\Phi_i(A_s,\pi^*\chi)\wedge\tilde\rho^{p-i}
                +\mathcal E_{\tilde V}(s),
                \label{BlowupExpansion}
                \end{equation}
                where the terms containing at least one $s^n\tilde\rho$ factor are
                \begin{align}
                \mathcal E_{\tilde V}(s)
                ={}&-\sum_{j=0}^{p-2}c_{j+n-p}\binom pj
                \sum_{a=0}^{j}\sum_{b=0}^{p-j-1}
                \binom ja\binom{p-j}{b}
                s^{j-a+n(p-j-b)}\notag\\
                &\hspace{5em}\cdot
                \int_{\tilde V}A_s^a\wedge \pi^*\chi^b\wedge\tilde\rho^{p-a-b}.
                \label{ErrorTerm}
                \end{align}

                Because $\tilde Z$ is compact, there is a constant $C_5>0$
                such that
                \[
                \pi^*\omega_0\leq C_5\tilde\rho,
                \qquad \pi^*\chi\leq C_5\tilde\rho.
                \]
                Thus, for $0<s\leq1$, $A_s\leq2C_5\tilde\rho$ and $\pi^*\chi\leq C_5\tilde\rho$.
                Moreover, every exponent of $s$ in~\eqref{ErrorTerm} is at least
                $n$, because $p-j-b\geq1$.  It follows that there is a constant
                $K_p>0$, depending only on $p$, $C_5$, and the coefficients $c_k$,
                but not on $\tilde V$, such that
                \begin{equation}
                \left|\mathcal E_{\tilde V}(s)\right|
                \leq K_p s^n\int_{\tilde V}\tilde\rho^p.
                \label{UniformErrorEstimate}
                \end{equation}
                By~\eqref{CohomologicalReplacement}, every summand in
                \eqref{BlowupExpansion} is nonnegative, and its $i=0$ summand is
                $s^p\int_{\tilde V}\tilde\rho^p$.  Consequently,
                \[
                \mathcal I_{\tilde V}(s)
                \geq
                \bigl(s^p-K_ps^n\bigr)\int_{\tilde V}\tilde\rho^p>0
                \]
                whenever $0<s<s_p$ for a sufficiently small $s_p>0$.  Since
                $p\leq m-1<n$ and $K_p$ is independent of $\tilde V$, taking the
                minimum over $1\leq p\leq m-1$ proves~\eqref{Equation2}
                simultaneously for every proper subvariety of $\tilde Z$.

                Next, note that there exists an ample line bundle $L$ over $M$.
                For some large $N$, the bundle $L^N$ has a holomorphic section $S_Y$ whose zero locus $Y$ is a smooth connected hypersurface
                such that $Y_1 = Y \cap Z$ is a divisor in $Z$.
                Define $\tilde{Y} = \pi^{-1}Y \cap \tilde{Z}$,
                $\hat{E} = \tilde{E} \cap \tilde{Z}$.
                Note that $\tilde{Y}$ and $\hat{E}$ are divisors in $\tilde{Z}$.
                We again need a mass-concentration result on $\tilde{Z}$ that concentrates mass on $\tilde{Y}$.
                
                \begin{lem}
                    There exists a positive current $\tilde{\Theta}_s \in [\tilde{\omega}_{s}]$ such that
                    $\tilde{\Theta}_s \geq \beta_{\tilde{Y}}[\tilde{Y}]$ for some constant $\beta_{\tilde{Y}}$ independent of $s$.
                    Moreover, $\tilde{\Theta}_s$ satisfies the cone condition in the current sense with respect to $\tilde{\chi}_s$.
                    \label{SecondConcentrate}
                \end{lem}

                The proof of this lemma is similar to the proof of Theorem~\ref{MassConcentration}. Let
                $$
                \chi_t = \chi +A^{-1}\sqrt{-1}\partial\db \log(\sum_{j}\theta_j\sum_{k}|g_{j,k}|^2+t^2)$$
                be the form on $M$, which is then pulled back and restricted to $\tilde Z$.
                Consider the following equations on $\tilde{Z}$, for $\Omega_s \in [\tilde{\omega}_s]$,
                $$
                \Omega_s^m = \sum_{k=0}^{m-2}c_{n-m+k}\binom{m}{k}\Omega_s^k\wedge\tilde{\chi}_s^{m-k} + d_0\tilde{\chi}_s^m + f_t\tilde{\chi}_s^m,$$
                where 
                $$
                d_0 = \frac{\int_{\tilde{Z}}\Omega_s^m - \sum_{k=0}^{m-2}c_{n-m+k}\binom{m}{k}\Omega_s^k\wedge\tilde{\chi}_s^{m-k}}{\int_{\tilde{Z}}\tilde{\chi}_s^m}$$
                and
                $$
                f_t = \frac{(\pi^*\chi_t |_{\tilde{Z}})^m - (\pi^*\chi |_{\tilde{Z}})^m}{(\tilde{\chi}_s |_{\tilde{Z}})^m}.$$

                The (\ref{Equation2}) shows that $d_0>0$.
                %And because of $\tilde{\chi}_s >\tilde{\chi}_0 = \pi^*\chi$, 
                %there exists a constant $t_1$ independent of $s$ such that
                %for any $t<t_1$, $(c_{n-2},\cdots,c_{n-m}+f_t)$ is strongly strictly $\Upsilon$-stable and so is $(c_{n-2},\cdots,c_{n-m}+f_t+d_0)$.
                Using $\tilde{\chi}_s>\pi^*\chi$ and $\chi_t \geq (1-A^{-1}C_6)\chi$, we can ensure, by choosing $A$ sufficiently large, that $f_t$ is bounded below by the gap needed for $(c_{n-2},\ldots,c_{n-m}+f_t+d_0)$ to be strongly strictly right-Noetherian.

                %证明positive mass
                Since the numerical condition holds on $\tilde{Z}$, 
                there exists a smooth solution $\tilde{\omega}_{s,t}\in [\tilde{\omega}_{s}]$ to the equations.
                Fix $s$, and denote the weak limit of $\tilde{\omega}_{s,t}$ as $t\to 0$ by $\tilde{\Theta}_{s}$.
                We want to show that $\tilde{\Theta}_s$ has positive mass independent of $s$ on $\tilde{Y}$.

                As in Lemma~\ref{PositiveMass}, we prove that
                for any neighborhood $\tilde{U}\subset \tilde{Z}$ of a point $y\in\tilde{Y}$, 
                there exist constants $\delta(\tilde{U})>0$ and $0<t_{\tilde{U}}<t_1$
                such that for all $t<t_{\tilde{U}}$, we have 
                $$
                \int_{\tilde{U}\cap \tilde{V}_t}\tilde{\omega}_{s,t}\wedge(\pi^*\chi)^{m-1} > \delta(\tilde{U}).$$

                The proof proceeds as in Lemma~\ref{PositiveMass}.
                First, there exists a sufficiently small constant $\epsilon_5>0$ depending only on the coefficients $c_i$ such that
                $$(\tilde{\omega}_{s,t} + \epsilon_5^{-1}\tilde{\chi}_s)^m >f_t\tilde{\chi}_s^m + (\pi^*\chi)^m \geq (\pi^*\chi_t)^m.$$

                The set $\{ \pi^*\chi_t \text{ is degenerate }\}$ will not affect the integral result,
                so we only work on the subset $\tilde{Z}_1 = \{  \pi^*\chi_t \text{ is nondegenerate }  \}$.
                Let $\lambda_1(z)\geq \cdots \geq \lambda_m(z)\geq 0 $ be the eigenvalues of $\tilde{\omega}_{s,t}+\epsilon_5^{-1}\tilde{\chi}_s$ with respect to $\pi^*\chi_t$.
                Since $(\tilde{\omega}_{s,t}+\epsilon_5^{-1}\tilde{\chi}_s)^{m-1}\wedge \pi^*\chi_t \geq \lambda_1\cdots\lambda_{m-1}(\pi^*\chi_t)^m$, we have
                $$
                \int_{\tilde{Z}} \lambda_1\cdots\lambda_{m-1}(\pi^*\chi_t)^m \leq \int_{\tilde{Z}} (\tilde{\omega}_{s,t}+\epsilon_5^{-1}\tilde{\chi}_s)^{m-1}\wedge \pi^*\chi_t  \leq C_7 $$
                where $C_7$ is a constant depending on $\epsilon_5$ and $s_2$.
                
                For any $\delta>0$, if we set $E_{\delta} = \{ z\in \tilde{Z}_1: \lambda_1\cdots\lambda_{m-1} > C_7\delta^{-1} \}$, then
                $$
                \int_{E_{\delta}} (\pi^*\chi_t)^m  \leq \frac{\int_{E_{\delta}} \lambda_1\cdots\lambda_{m-1}(\pi^*\chi_t)^m}{C_7\delta^{-1}} \leq \delta.$$
                By choosing small $\varepsilon_1$, we have $2\pi^*\chi_t > \pi^*\chi$.
                Moreover,
                $$
                \int_{E_{\delta}} \pi^*\chi_t \wedge(\pi^*\chi)^{m-1} < 2^{m-1}\int_{E_{\delta}}(\pi^*\chi_t)^{m} < 2^{m-1}\delta.$$
                By Lemma~2.1 of \cite{JPDemaillyPaun}, there exists $\delta(\tilde{U})>0$ such that for all $t<t_{\tilde{U}}$, the set $U=\pi(\tilde{U})$ is a neighborhood of $Y$ in $Z$, and
                $$
                \int_{U\cap V_t}\chi_t\wedge\chi^{m-1}\geq \delta(\tilde{U}),$$
                where $V_t = \{ \log(\sum_{j}\theta_j\sum_{k}|g_{j,k}|^2) < \log t \}$.

                Let $\tilde{V}_t=\pi^{-1}V_t\cap\tilde{Z}$. Then
                \begin{align*}
                &\int_{\tilde U\cap\tilde V_t\cap E_\delta^c}
                \pi^*\chi_t\wedge(\pi^*\chi)^{m-1}\\
                &=\int_{\tilde U\cap\tilde V_t}
                \pi^*\chi_t\wedge(\pi^*\chi)^{m-1}\\
                &\quad-\int_{\tilde U\cap\tilde V_t\cap E_\delta}
                \pi^*\chi_t\wedge(\pi^*\chi)^{m-1}\\
                &\geq\delta(\tilde U)-2^{m-1}\delta
                =\frac{9}{10}\delta(\tilde U).
                \end{align*}
                where we let $\delta := \frac{1}{10\cdot 2^{m-1}}\delta(\tilde{U})$.

                By the choice of $\epsilon_5$, 
                we have $(\tilde{\omega}_{s,t} + \epsilon_5^{-1}\tilde{\chi}_s)^m > (\pi^*\chi_t)^m$.
                For $z\in E_{\delta}^{c}$ we have 
                $$
                \tilde{\omega}_{s,t}(z) + \epsilon_5^{-1}\tilde{\chi}_s(z) \geq \frac{\delta}{C_7}  \pi^*\chi_t(z)$$
                Then 
                \begin{align*}
                    &\int_{\tilde{U}\cap \tilde{V}_t} \tilde{\omega}_{s,t}\wedge(\pi^*\chi)^{m-1} \\
                    \geq& \int_{\tilde{U}\cap \tilde{V}_t\cap E_{\delta}^{c}}\tilde{\omega}_{s,t}\wedge(\pi^*\chi)^{m-1} \\
                    =& \int_{\tilde{U}\cap \tilde{V}_t\cap E_{\delta}^{c}}(\tilde{\omega}_{s,t} + \epsilon_5^{-1}\tilde{\chi}_s)\wedge(\pi^*\chi)^{m-1} - \epsilon_5^{-1}\tilde{\chi}_s\wedge(\pi^*\chi)^{m-1}\\
                    \geq& \int_{\tilde{U}\cap \tilde{V}_t\cap E_{\delta}^{c}}\frac{\delta}{C_7}  \pi^*\chi_t \wedge(\pi^*\chi)^{m-1} - 
                            \epsilon_5^{-1}\tilde{\chi}_s\wedge(\pi^*\chi)^{m-1}\\
                    \geq& \frac{\delta}{C_7}  \frac{9}{10}\delta(\tilde{U}) -
                        \int_{\tilde{U}\cap \tilde{V}_t\cap E_{\delta}^{c}}\epsilon_5^{-1}\tilde{\chi}_s\wedge(\pi^*\chi)^{m-1}
                \end{align*}

                Since 
                $$
                \int_{\tilde{U}\cap \tilde{V}_t\cap E_{\delta}^{c}}\tilde{\chi}_s\wedge(\pi^*\chi)^{m-1}< \int_{\tilde{U}\cap \tilde{V}_t\cap E_{\delta}^{c}}\tilde{\chi}_{s_2}\wedge(\pi^*\chi)^{m-1},$$
                where $s_2$ is a fixed number.
                Hence we can choose $t_{\tilde{U}}$ sufficiently small such that
                $$ 
                \epsilon_5^{-1}\int_{\tilde{U}\cap \tilde{V}_t\cap E_{\delta}^{c}}\tilde{\chi}_{s_2}\wedge(\pi^*\chi)^{m-1}<\frac{\delta}{C_7}\frac{1}{10}\delta(\tilde{U}).$$
                Then we have $\int_{\tilde{U}\cap \tilde{V}_t} \tilde{\omega}_{s,t}\wedge(\pi^*\chi)^{m-1}>\delta_{\tilde{U}}$ for all $t<t_{\tilde{U}}$
                with $\delta_{\tilde{U}} = \frac{\delta}{C_7}\frac{4}{5}\delta(\tilde{U})$.

                By the Skoda-El Mir extension theorem and standard support theorem, 
                there exists $\beta_{\tilde{Y}}$ such that $\tilde{\Theta}_s\geq 2\beta_{\tilde{Y}}[\tilde{Y}]$, $\tilde{\Theta}_s \in [\tilde{\omega}_s]$.
                It is straightforward to verify that $\tilde{\Theta}_s$ satisfies the cone condition with respect to $\tilde{\chi}_s$ on $\tilde{Z}$. This proves Lemma~\ref{SecondConcentrate}.

                Let $\chi_Y$ be the K\"{a}hler metric in the cohomology class of $[Y]$ on $M$ and $\chi_{\tilde{Y}} = \pi^*\chi_Y |_{\tilde{Z}}$.
                Let $h_{\hat{E}}$ be the metric on $\hat{E}$ and $s_{\hat{E}}$ be the defining section.
                Take $\chi_{\hat{E}}$ be the form such that 
                $$
                \chi_{\hat{E}} = [ \hat{E} ] - \sqrt{-1} \partial \db \log | s_{\hat{E}} |_{h_{\hat{E}}}^{2}.$$
                Consider the current 
                $$
                \tilde{T}_s = \tilde{\Theta}_s - \beta_{\tilde{Y}}[\tilde{Y}] +(\beta_{\tilde{Y}}-s)\chi_{\tilde{Y}}  + (sC_4^{-1}+\beta_{\hat{E}})[\hat{E}] - \beta_{\hat{E}}\chi_{\hat{E}}\in [(1+s)\pi^*\omega].$$
                Decrease the upper bound $s_0$ for $s$, if necessary, so that $s_0<\beta_{\tilde{Y}}$. We can choose $\beta_{\hat{E}}$ sufficiently small so that there exists a sufficiently small constant $\epsilon_6$ satisfying
                $$(\beta_{\tilde{Y}}-s)\chi_{\tilde{Y}} - \beta_{\hat{E}} \chi_{\hat{E}} \geq \epsilon_6\tilde{\chi}_{s_0} \geq \epsilon_{6}\tilde{\chi}_{s}$$
                for any $0\leq s \leq s_0$.
                Here $\epsilon_6$ is independent of $s$.
                Then $\tilde{T}_s$ has positive mass and positive Lelong number on $\tilde{Y}\cup\hat{E}$
                and $(\beta_{\tilde{Y}}-s)\chi_{\tilde{Y}} - \beta_{\hat{E}} \chi_{\hat{E}} $ is K\"{a}hler.

                Let $x_1$ denote the largest real root of the first derivative of the degree-$m$ polynomial associated with the preceding equation on $\tilde Z$; it is distinct from the root of the ambient degree-$n$ polynomial introduced earlier. We can now choose an even smaller $s$ such that $(2s)^{-1} > \frac{x_1}{\epsilon_6}$. Then 
                \[
                (2s)^{-1}((\beta_{\tilde{Y}}-s)\chi_{\tilde{Y}} - \beta_{\hat{E}} \chi_{\hat{E}} ) \geq x_1\tilde{\chi}_s.
                \]
                By the definition of $x_1$,
                we know that   
                $(2s)^{-1}((\beta_{\tilde{Y}}-s)\chi_{\tilde{Y}} - \beta_{\hat{E}} \chi_{\hat{E}} )$
                satisfies the cone condition with respect to $\tilde{\chi}_s$.
                Since $\tilde{\chi}_s > \tilde{\chi}_0 = \pi^*\chi$, 
                $\tilde{\Theta}_s$ and $(2s)^{-1}((\beta_{\tilde{Y}}-s)\chi_{\tilde{Y}} - \beta_{\hat{E}} \chi_{\hat{E}} )$ 
                both satisfy the cone condition with respect to $\pi^*\chi$.
                
                Away from $\tilde{Y}\cup\hat{E}$, we have
                $$
                \frac{1}{1+s}\tilde{T}_s = \frac{1}{1+s}\tilde{\Theta}_s +\frac{s}{1+s} \Big( s^{-1}(\beta_{\tilde{Y}}-s)\chi_{\tilde{Y}} - s^{-1}\beta_{\hat{E}} \chi_{\hat{E}} ) \Big).$$
                By Proposition~\ref{SeveralResult}, the convexity of the cone implies that
                $$
                \frac{1}{1+s}\tilde{\Theta}_s +\frac{s}{1+s} \Big( (2s)^{-1}(\beta_{\tilde{Y}}-s)\chi_{\tilde{Y}} - (2s)^{-1}\beta_{\hat{E}} \chi_{\hat{E}} ) \Big)\in\bar\Upsilon^1(\pi^*\chi)
                $$
                away from $\tilde{Y}\cup\hat{E}$. Then Lemma~\ref{strictPRT} implies that
                $\frac{1}{1+s}\tilde{T}_s$ satisfies the strict cone condition  with respect to $\pi^*\chi$ away from $\tilde{Y}\cup\hat{E}$.
            
                Since $\pi(\tilde{Y})$ and $E:=\pi(\tilde{E})$ have strictly smaller dimension than $Z$, induction gives a neighborhood $\tilde{W}$ of $\pi(\tilde{Y})\cup E$ in $M$ and a smooth K\"ahler form $\tilde{\omega}_{\tilde{W}}$ satisfying the cone condition. As in the proof of Theorem~\ref{GluingThm}, since $\frac{1}{1+s}\tilde{T}_s$ has positive Lelong number on both $\tilde{Y}$ and $\tilde{E}$, we can glue $(\pi^*\tilde{\omega}_{\tilde{W}})|_{\tilde Z}$ with the local smoothing of $\frac{1}{1+s}\tilde{T}_s$ on subsets of $\tilde{Z}\backslash\tilde{E}$ to obtain
                \[
                \tilde{\Omega}=\pi^*\omega+\sqrt{-1}\partial\db\tilde{\psi}
                \]
                satisfying the cone condition on $\tilde{Z}\backslash\tilde{E}$. Since we can choose the local smoothing of $\frac{1}{1+s}\tilde{T}_s$ to avoid $\tilde{E}$, we can extend its pullback via $\pi^{-1}$ in the same manner as Lemma~\ref{SmoothExtention}, and also use $\tilde{\omega}_{\tilde{W}}$ as the natural extension of $(\pi^*\tilde{\omega}_{\tilde{W}})|_{\tilde Z}$. If the neighborhood is sufficiently small, we can still avoid the jump in the regularized maximum process near the boundary of the domain of the local defining functions and obtain a neighborhood $U$ of $Z$ and a K\"ahler metric $\omega_U=\omega_0+\sqrt{-1}\partial\db\psi_U$ satisfying the cone condition.
            \end{itemize}
        \end{proof}

\bibliographystyle{plain}
\bibliography{reference}

\bigskip
\begin{center}
\begin{minipage}{0.88\textwidth}
\small
\textbf{Gao Chen}\\[0.35em]
University of Science and Technology of China\\
No.\ 96 Jinzhai Road, Baohe District, Hefei, Anhui 230000, P.\,R.\ China\\[0.35em]
\href{mailto:chengao1@ustc.edu.cn}{\texttt{chengao1@ustc.edu.cn}}\\
\textbf{Sijie Nie}\\[0.35em]
University of Science and Technology of China\\
No.\ 96 Jinzhai Road, Baohe District, Hefei, Anhui 230000, P.\,R.\ China\\[0.35em]
\href{mailto:niesijie@mail.ustc.edu.cn}{\texttt{niesijie@mail.ustc.edu.cn}}\\
\textbf{Yulun Xu}\\[0.35em]
University of Toronto, 40 St. George Street, Toronto, ON, Canada\\[0.35em]
\href{mailto:yulun.1.xu@gmail.com}{\texttt{yulun.1.xu@gmail.com}}
\end{minipage}
\end{center}

\end{document}